\theoremstyle{plain}
\newtheorem{theorem}{Theorem}[section]
\newtheorem{proposition}[theorem]{Proposition}
\newtheorem{lemma}[theorem]{Lemma}
\newtheorem{corollary}[theorem]{Corollary}
\theoremstyle{definition}
\newtheorem*{acknowledgements}{Acknowledgements}
\newtheorem{conjecture}[theorem]{Conjecture}
\newtheorem{definition}[theorem]{Definition}
\theoremstyle{remark}
\newtheorem{remark}[theorem]{Remark}
\newtheorem{notation}[theorem]{Notation}
\renewcommand{\bar}{\overline}
\renewcommand{\tilde}{\widetilde}
\renewcommand{\vec}{\overrightarrow}
\renewcommand{\hat}{\widehat}
\newcommand{\C}{\mathbb{C}}
\newcommand{\N}{\mathbb{N}}
\newcommand{\R}{\mathbb{R}}
\newcommand{\U}{\mathcal{U}}
\newcommand{\Z}{\mathbb{Z}}
\newcommand{\modz}{~(\operatorname{mod}\Z)}
\title[On the Chern numbers for pseudo-free circle actions]{On the Chern numbers for pseudo-free circle actions}
\author[B. H. An]{Byung Hee An}
\address{Center for Geometry and Physics, Institute for Basic Science (IBS), Pohang, Republic of Korea 37673}
\email{anbyhee@ibs.re.kr}
\author[Y. Cho]{Yunhyung Cho}
\address{Department of Mathematics Education, Sungkyunkwan University, 25-2, Sungkyunkwan-Ro,
Jongno-Gu, Seoul 03063, Republic of Korea}
\email{yunhyung@skku.edu}
\keywords{resolution of singularities, circle actions, orbifold Chern numbers}
\subjclass[2010]{53D20(primary), and 53D05(secondary)}
\begin{document}

\date{March 5, 2017}

\begin{abstract}	
Let $(M,\psi)$ be a $(2n+1)$-dimensional oriented closed manifold with a pseudo-free $S^1$-action 
$\psi : S^1 \times M \rightarrow M$. 
We first define a \textit{local data} $\mathcal{L}(M,\psi)$ of the action $\psi$ which consists of 
pairs $(C, (p(C) ; \overrightarrow{q}(C)))$ where $C$ is an exceptional orbit, $p(C)$ is the order of isotropy subgroup of $C$, and $\overrightarrow{q}(C) \in (\Z_{p(C)}^{\times})^n$ is a vector whose entries are the weights of the slice representation of $C$. 
In this paper, we give an explicit formula of the Chern number $\langle c_1(E)^n, [M/S^1] \rangle$ modulo $\Z$ in terms of 
the local data, where $E  = M \times_{S^1} \C$ is the associated complex line orbibundle 
over $M/S^1$.
Also, we illustrate several applications to various problems arising in equivariant symplectic topology. 
\end{abstract}
\maketitle

\section{Introduction}

Let $N$ be a $2n$-dimensional oriented closed manifold and $S^1 = \{ z \in \C ~|~ |z| = 1 \}$ be the unit circle group. 
Suppose that there is an effective $S^1$-action $\phi : S^1 \times N \rightarrow N$ on $N$.
The localization theorem due to Atiyah-Bott \cite{AB} and Berline-Vergne \cite{BV} is a very powerful technique for computing global (topological) invariants of $N$ in terms of local data 
\[
\mathcal{L}(N, \phi) = \{(F, \nu_{S^1}(F))\}_{F \subset N^{S^1}}
\]
where $F$ is a connected component of the fixed point set $N^{S^1}$ and $\nu_{S^1}(F)$ is an $S^1$-equivariant normal bundle of $F$ in $N$. In particular if $N$ admits an $S^1$-invariant almost complex structure, then we can compute the Chern numbers of the tangent bundle $TN$ in terms of the local data $\mathcal{L}$. 

In this paper, we attempt to find an odd dimensional analogue of the ABBV-localization theorem in the sense that if we have a $(2n+1)$-dimensional oriented closed manifold $M$ equipped with an effective fixed-point-free $S^1$-action $\psi : S^1 \times M \rightarrow M$, then our aim is to find a method for computing global invariants in terms of local data. Here, local data means 
\[
\mathcal{L}(M, \psi) = \{ (M^{\Z_p}, \nu_{S^1}(M^{\Z_p}))\}_{p \in \N, p>1}
\]
where $\Z_p$ is the cyclic subgroup of $S^1$ of order $p$, $M^{\Z_p}$ is a submanifold of $M$ fixed by $\Z_p $, and $ \nu_{S^1}(M^{\Z_p})$ is an $S^1$-equivariant normal bundle of $M^{\Z_p}$ in $M$.
To do this, let us consider the following commutative diagram 
\[
\xymatrixcolsep{5pc}\xymatrix{
M \times \C \ar[r]_{q}^{/S^1} \ar[d]_{\pi} & M \times_{S^1} \C  = E\ar[d]^{\pi}\\
M \ar[r]_{q}^{/S^1} &  M / S^1 = B}
\]
where $S^1$ acts on $M \times \C$ by $$t \cdot (x,z) = (t\cdot x, tz)$$ for every 
$t \in S^1$ and $(x,z) \in M \times \C$. 

If the action is free, then $B$ is a smooth manifold and $E$ becomes a complex line bundle over $B$ with the first Chern class $c_1(E) \in H^2(B;\Z)$. In particular, the Chern number $\langle c_1(E)^n, [B] \rangle$ is an integer where $[B] \in H_{2n}(B;\Z)$ is the fundamental homology class of $B$.  

On the other hand, if the action is not free, then $B$ is an orbifold with cyclic quotient singularities and $E$ becomes a complex line orbibundle over $B$. Then the first Chern class $c_1(E) \in H^2(B,\R)$ is defined, via the Chern-Weil construction, as a cohomology class represented by a differential 2-form $\Theta_{\alpha}$ on $B$ where $\alpha$ is a normalized connection 1-form on $M$ and $\Theta_{\alpha}$ is a 2-form on $B$ such that $d\alpha = q^* \Theta_{\alpha}$. Then the Chern number of $E$ is given by
\[
\langle c_1(E)^n, [B] \rangle = \int_B \Theta_{\alpha} \wedge \Theta_{\alpha} \wedge \cdots \wedge \Theta_{\alpha} = \int_M \alpha \wedge (d\alpha)^n
\]
which is a rational number in general (see \cite[Theorem 1]{W}). 
However, the local data $\mathcal{L}(M, \psi)$ does not detect any information about free orbits by definition of $\mathcal{L}(M, \psi)$. 
In fact, if the $S^1$-action is free, then the local data $\mathcal{L}(M, \psi)$ is an empty set. Thus to make our work to be meaningful, we will construct an invariant, 
namely $e(M,\psi)$, of $(M,\psi)$ which is zero if $\psi$ is a free action, and it measures the contributions of exceptional orbits to the Chern number of the complex line orbibundle associated to $(M,\psi)$.

Now, let us define 
\[
e(M, \psi) = \langle c_1(E)^n, [B] \rangle \modz.
\]
Obviously, this number is well-defined up to $S^1$-equivariant diffeomorphism. Also, we have  $e(M, \psi) = 0$ if $\psi$ is a free action. Thus the invariant $e(M,\psi)$ is a good candidate which can be computed in terms of the local data $\mathcal{L}(M,\psi)$. 

Now, consider an $S^1$-manifold $M$ and fix a point $x$ in the interior $\mathring{M}$ of $M$. Let $C$ be an orbit of $x$ whose isotropy subgroup 
is $\Z_{p(C)}$ where $p(C)$ be the order of the isotropy subgroup of $C$. 
By the slice theorem (see Theorem \ref{theorem-slice_theorem}), 
there exists an $S^1$-equivariant neighborhood $\mathcal{U}$ of $C$ such that 
\[
\mathcal{U} \cong S^1 \times_{\Z_{p(C)}} V_x
\]
where $V_x$ is the slice representation of $\Z_{p(C)}$ at $x$. 

\begin{proposition}\label{prop-localnormalform}
Let $(M,\psi)$ be a $(2n+1)$-dimensional fixed-point-free $S^1$-manifold. 
Suppose that $C\subset\mathring{M}$ is an orbit with the isotropy subgroup 
$\Z_p$, which is possibly trivial. Then there exists an $S^1$-equivariant tubular neighborhood $\mathcal{U}$ of $C$
which is $S^1$-equivariantly diffeomorphic to $S^1 \times \C^n$ where $S^1$ acts on $S^1 \times \C^n$ by 
\[
t \cdot (w, z_1,z_2, \cdots, z_n) = (t^pw, t^{q_1}z_1, t^{q_2}z_2, \cdots, t^{q_n}z_n)
\]
for some integers $q_1, q_2, \cdots, q_n$. Moreover, the (unordered) integers {$q_j$'s} are uniquely determined 
modulo $p$.
\end{proposition}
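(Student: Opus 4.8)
The plan is to reduce to the slice model and then untwist it by an explicit power map. First I would invoke the slice theorem (Theorem \ref{theorem-slice_theorem}) to replace $\mathcal{U}$ by the twisted product $S^1 \times_{\Z_p} V_x$, on which the residual circle acts by $t\cdot[w,v] = [tw,v]$; here $V_x$ is the slice (normal) representation of the isotropy group $\Z_p$. I would use that a fixed-point-free $S^1$-action has only finite isotropy (since the only positive-dimensional subgroup of $S^1$ is $S^1$ itself, which would force a fixed point), so $C$ is a circle and $V_x$ has real dimension $2n$.

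The second step is to endow $V_x$ with a $\Z_p$-invariant complex structure. Fix a $\Z_p$-invariant inner product, so $\Z_p$ acts through $O(2n)$. Because $\Z_p$ fixes $C$ pointwise and $S^1$ preserves the orientation of the oriented manifold $M$, the induced action on the normal space $V_x$ preserves orientation, i.e. it lands in $SO(2n)$. A generator then has its non-real eigenvalues in conjugate pairs, and its $\pm 1$-eigenspaces are each even-dimensional (the $(-1)$-eigenspace because $\det = 1$, and then the $(+1)$-eigenspace because the total dimension $2n$ is even). Hence $V_x$ splits into $\Z_p$-invariant real $2$-planes, giving a $\Z_p$-invariant complex structure for which $V_x \cong \C^n$ and $\zeta := e^{2\pi i/p}$ acts as $\mathrm{diag}(\zeta^{a_1},\dots,\zeta^{a_n})$ for integers $a_j$.

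With this identification I would write down the untwisting map explicitly: define $\Phi\colon S^1\times_{\Z_p}\C^n \to S^1\times\C^n$ by $\Phi([w,z_1,\dots,z_n]) = (w^p,\, w^{a_1}z_1,\dots,w^{a_n}z_n)$. A direct check shows $\Phi$ is well defined on the $\Z_p$-quotient (the relation $[w,z]=[w\zeta^{-1},\zeta^{a_1}z_1,\dots,\zeta^{a_n}z_n]$ is killed by the exponents $p$ and $a_j$), that it is a diffeomorphism (invert by choosing a $p$-th root $w$ of the first coordinate and setting $z_j=w^{-a_j}\xi_j$), and that it intertwines the residual action with $t\cdot(u,\xi) = (t^p u,\, t^{a_1}\xi_1,\dots,t^{a_n}\xi_n)$, since $\Phi([tw,z]) = (t^pw^p,\, t^{a_1}w^{a_1}z_1,\dots,t^{a_n}w^{a_n}z_n)$. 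This yields the asserted normal form with $q_j = a_j$.

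Finally, for the uniqueness statement I would argue representation-theoretically. The residual $S^1$-action restricts along the orbit $C$ to the $\Z_p$-slice representation $V_x$, whose isomorphism class is independent of all choices by the slice theorem; reducing the weights modulo $p$ recovers exactly the weights $a_j \bmod p$ of $V_x$ as a $\Z_p$-representation. By the classification of representations of the cyclic group $\Z_p$, the (unordered) multiset $\{a_j \bmod p\}$ is a complete invariant of $V_x$, so the integers $q_j$ are determined modulo $p$. I expect the main obstacle to be the second step: namely verifying that the slice representation genuinely complexifies with no leftover one-dimensional real summand, which is exactly where orientability of $M$ together with the even-dimensionality of the slice are essential; the remaining steps are the explicit, if convention-sensitive, computation with the power map.
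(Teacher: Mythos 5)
Your proposal follows essentially the same route as the paper: slice theorem to get $S^1\times_{\Z_p}V_x$, identification of the slice with $\C^n$ on which $\Z_p$ acts diagonally by roots of unity (the paper cites Koll\'ar for this where you give the $SO(2n)$ eigenvalue argument), and the same power-map untwisting $[w,z]\mapsto(w^p,w^{a_1}z_1,\dots,w^{a_n}z_n)$, which the paper packages as its Lemma 2.2. The argument is correct, and your uniqueness discussion via the slice representation matches the paper's (which likewise defers the details to Koll\'ar).
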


In other words, Proposition \ref{prop-localnormalform} says that an $S^1$-equivariant tubular neighborhood of the form 
$ S^1 \times_{\Z_{p(C)}} V_x$ can be trivialized as a product space and the given action can be expressed as a linear action. 

In this paper, we deal with the case where the action is {\em pseudo-free}. Recall that an $S^1$-action on a smooth manifold $M$ is called {\em pseudo-free} if there is no fixed point and there are only finitely many exceptional orbits. Equivalently, the action on $M$ is pseudo-free if the quotient space $M / S^1$ has only isolated cyclic quotient singularities. 
Let $\mathcal{E} = \mathcal{E}(M, \psi)$ be the set of exceptional orbits of $(M,\psi)$.
Then Proposition \ref{prop-localnormalform} implies that each $C\in\mathcal{E}$ with the stabilizer $\Z_{p(C)}$ assigns a  vector 
\[
\vec{q}(C) = (q_1(C), q_2(C), \cdots, q_n(C)) \in (\Z_{p(C)}^{\times})^n,
\]
where $\Z_p^{\times}$ is a multiplicative group consisting of elements in $\Z_p$ which are coprime to $p$.
We call $\vec{q}(C)$ the {\em weight-vector}, and say that $C$ is {\em of $(p(C); \vec{q}(C))$-type}.
\begin{remark}
	Note that $\vec{q}(C)$ is unique up to ordering of $q_i(C)$'s. 
\end{remark}
Thus if the action $\psi : S^1 \times M \rightarrow M$ is pseudo-free, then the local data of $(M,\psi)$ is given by 
\[
\mathcal{L}(M,\psi) = \{ (C, (p(C); \vec{q}(C)))\}_{C\in\mathcal{E}}.
\]
In Section 4, we give an explicit formula (Theorem \ref{theorem-main})
of $e(M,\psi)$ in terms of the local data $\mathcal{L}(M,\psi)$ if $\psi$ is a pseudo-free $S^1$ action on $M$.
As a first step, we prove the following. 

\begin{proposition}\label{proposition-exactlyoneexceptionalorbittype}
Let $p > 1$ be an integer and let $\vec{q} = (q_1, \cdots, q_n) \in (\Z_p^{\times})^n$.
Then there exists a $(2n+1)$-dimensional oriented closed pseudo-free $S^1$-manifold $(M,\psi)$ having exactly one exceptional orbit $C$ of $(p;\vec{q})$-type. 
Moreover, 
\[
e(M, \psi) = \frac{q_1^{-1}q_2^{-1}\cdots q_n^{-1}}{p} \modz
\]
where $q_j^{-1}$ is the inverse of $q_j$ in $\Z_p^{\times}$.
\end{proposition}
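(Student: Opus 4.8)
The plan is to separate the statement into an existence part and a computation part. For existence I would build $M$ by equivariant gluing. Fix integer representatives $q_1,\dots,q_n$, each coprime to $p$, and let $N_1=S^1\times D^{2n}$ carry the linear action $t\cdot(w,z)=(t^pw,t^{q_1}z_1,\dots,t^{q_n}z_n)$ of Proposition \ref{prop-localnormalform}. Its core circle $C=S^1\times\{0\}$ is the unique exceptional orbit of $N_1$ and is of $(p;\vec{q})$-type, while the induced action on $\partial N_1=S^1\times S^{2n-1}$ is free: if $z\ne 0$ then some $z_j\ne 0$, and $t^p=1$ together with $t^{q_j}=1$ force $t=1$ because $\gcd(p,q_j)=1$. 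I would then cap $N_1$ off with a compact \emph{free} $S^1$-manifold $N_2$ with $\partial N_2\cong_{S^1}\partial N_1$; since $\partial N_1\to Q:=\partial N_1/S^1$ is then an honest principal circle bundle over a closed manifold, such a cap exists once the class of $\partial N_1$ vanishes in free $S^1$-bordism, which can be arranged (e.g.\ after equivariant connected sum along free orbits, an operation that touches neither $C$ nor the Chern number modulo $\Z$). Gluing $M=N_1\cup_\phi N_2$ along an equivariant diffeomorphism of the boundaries yields an oriented closed pseudo-free $S^1$-manifold whose only exceptional orbit is $C$.

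For the computation I would use the identity $\langle c_1(E)^n,[B]\rangle=\int_M\alpha\wedge(d\alpha)^n$ for a normalized invariant connection form $\alpha$ (so $\alpha(X)=1$, where $X$ is the fundamental vector field), noting that the value is independent of $\alpha$ because $c_1(E)\in H^2(B;\R)$ is. The key reduction is that the free cap contributes nothing: over $N_2$ the bundle $E$ is honest and admits a flat connection relative to the boundary, so choosing $\alpha$ flat there gives $\int_{N_2}\alpha\wedge(d\alpha)^n=0$ and hence
\[
e(M,\psi)\equiv\int_{N_1}\alpha\wedge(d\alpha)^n\modz .
\]
It then remains to evaluate a purely local integral on $S^1\times D^{2n}$. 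Writing $w=e^{i\phi}$, $z_j=r_je^{i\theta_j}$ and $X=p\,\partial_\phi+\sum_j q_j\,\partial_{\theta_j}$, I would take $\alpha=a(r)\,d\phi+\sum_j c_j(r)\,d\theta_j$ interpolating between the smooth form $\tfrac1p\,d\phi$ at the core and a flat form $\sum_j c_j\,d\theta_j$ on $\partial N_1$ matching the cap, subject throughout to $\alpha(X)=ap+\sum_j c_jq_j=1$, and compute $\int_{N_1}\alpha\wedge(d\alpha)^n$ by expanding the top form and integrating the radial variables against the angular factors $d\phi\wedge d\theta_1\wedge\cdots\wedge d\theta_n$.

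The main obstacle is precisely this final evaluation, and in particular extracting the multiplicative inverses $q_j^{-1}$ rather than the naive $1/q_j$. The boundary constants $c_j$ are not free: they are pinned modulo $\Z$ by the requirement that $\alpha$ descend to the honest bundle over $Q$ and agree with the flat connection on the cap. Because $C$ has isotropy $\Z_p$ acting on the slice with weights $q_j$, trivializing $E$ across the lens space $S^{2n-1}/\Z_p=\partial N_1/(\text{residual }\Z_p)$ imposes congruences modulo $p$ (subject to $\sum_j c_jq_j\equiv 1$) whose solution is governed by the inverses $q_j^{-1}\in\Z_p^\times$. Carrying these boundary values through the radial integral collapses it, by Stokes, to a single angular term and produces
\[
e(M,\psi)\equiv\frac{q_1^{-1}q_2^{-1}\cdots q_n^{-1}}{p}\modz .
\]
As an independent consistency check I would choose the representatives $q_1,\dots,q_n,p$ pairwise coprime and verify that summing the analogous vertex contributions $\tfrac{1}{a_j}\big(\prod_{i\ne j}a_i\big)^{-1}_{a_j}$ over all $n+1$ vertices of the weighted projective space $\CP(p,q_1,\dots,q_n)$ reproduces its classical degree $\tfrac{1}{p\,q_1\cdots q_n}$; this is exactly the Fourier--Dedekind partial-fraction identity, and the vertex at the weight $p$ recovers the formula above, confirming that the isolated orbit $C$ contributes the same local term.
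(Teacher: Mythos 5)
Your overall architecture (local model $S^1\times D^{2n}$, cap it off with a free piece, reduce $e(M,\psi)$ mod $\Z$ to a local integral) is the right shape, but both of the two steps that carry the actual mathematical weight are asserted rather than proved, and each hides a real difficulty. First, the existence of the free cap $N_2$ with $\partial N_2\cong_{S^1}\partial N_1$ is exactly the hard part of the existence statement, and your justification does not work: whether $\partial N_1=S^1\times S^{2n-1}$ (equivalently, the circle bundle over the generalized lens space $S^{2n-1}/\Z_p$) bounds a free $S^1$-manifold is a property of the boundary alone, so ``equivariant connected sum along free orbits'' in the interior cannot alter the relevant bordism class, and the class in $\Omega_{2n-1}(BS^1)$ is not obviously zero for $n\geq 2$. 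The paper never invokes bordism; it \emph{constructs} the cap inductively, by exhibiting the singular filling as a product of a cyclic quotient $\C^{n-1}\times_{\Z_p}\C$ with $S^1$ and resolving that toroidal singularity equivariantly via \cite{KKMS} (Lemma \ref{lem_extend_one_exceptional_orbit}), starting from the $3$-dimensional case where the whole closed manifold is simply $S^3/\Z_{q^{-1}}$ (Lemma \ref{lem_3dim}). Relatedly, your claim that $E$ ``admits a flat connection relative to the boundary'' over $N_2$ is false in general (flatness forces the real Euler class to vanish); the correct statement, which requires the doubling argument of Lemma \ref{lem_euler_number_indep_choice_of_resolution}, is only that the cap's contribution is well defined modulo $\Z$.

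Second, the computation that actually produces $q_1^{-1}\cdots q_n^{-1}/p$ is missing. Your ansatz $\alpha=a(r)\,d\phi+\sum_j c_j(r)\,d\theta_j$ with constant boundary values is not even globally defined on $\partial N_1=S^1\times S^{2n-1}$ for $n\geq 2$, since $d\theta_j$ is singular where $z_j=0$ and such points lie on the boundary sphere; and the decisive step --- that the boundary congruences are ``governed by the inverses $q_j^{-1}$'' --- is precisely the content of the proposition and is nowhere derived. The paper avoids this local integral entirely for $n\geq 2$: the inverse appears once, in dimension $3$, from the identification of the local model with $S^3/\Z_{q^{-1}}$ together with the scaling $e(S^3/\Z_m)=m\cdot e(S^3)$ of the fundamental vector field, and is then propagated to general $n$ by the product construction (which preserves $e$) and the quotient formula $e(M/\Z_r,\psi_r)=r^n\,e(M,\psi)$ of Lemma \ref{lem_quotient_formula}, applied along the factorization $q_i\equiv r_ir_{i+1}\cdots r_n$. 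Your weighted projective space consistency check is a sensible sanity test but cannot substitute for either missing argument.
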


Using Proposition \ref{proposition-exactlyoneexceptionalorbittype}, we prove our main theorem as follows. 
\begin{theorem}\label{theorem-main}
Suppose that $(M,\psi)$ is a $(2n+1)$-dimensional oriented closed pseudo-free $S^1$-manifold with the set $\mathcal{E}$ of exceptional orbits.
Then
\[
e(M,\psi) = \sum_{C\in\mathcal{E}} \frac{q_1(C)^{-1} q_2(C)^{-1}\cdots q_n(C)^{-1}}{p(C)} \modz
\]
where $q_j(C)^{-1} $ is the inverse of $q_j(C)$ in $\Z_{p(C)}^{\times}$. 
\end{theorem}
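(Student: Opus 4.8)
The plan is to \emph{localize} the Chern number to the exceptional orbits and then reduce to the one-orbit computation of Proposition~\ref{proposition-exactlyoneexceptionalorbittype}. Enumerate the exceptional orbits as $C_1,\dots,C_k$, where $C_i$ is of type $(p_i;\vec{q}_i)$ with $p_i=p(C_i)$ and $\vec{q}_i=\vec{q}(C_i)$. By Proposition~\ref{prop-localnormalform} each $C_i$ has an $S^1$-equivariant tubular neighborhood $\mathcal{U}_i\cong S^1\times\C^n$ carrying the standard linear action, and I set $M_0=M\setminus\bigcup_i\mathcal{U}_i$ for the free part, so that $\partial M_0=\bigsqcup_i L_i$ where $L_i=\partial\mathcal{U}_i$ is a free $S^1$-manifold. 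Recalling that $e(M,\psi)=\int_M\alpha\wedge(d\alpha)^n\modz$ for any normalized connection $1$-form $\alpha$, a quantity independent of $\alpha$, I would split the integral over $M=M_0\cup\bigcup_i\mathcal{U}_i$ to obtain $e(M,\psi)=\int_{M_0}\alpha\wedge(d\alpha)^n+\sum_i\int_{\mathcal{U}_i}\alpha\wedge(d\alpha)^n\modz$.

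For each $i$, Proposition~\ref{proposition-exactlyoneexceptionalorbittype} supplies a closed model $(M_i,\psi_i)$ with a single exceptional orbit of type $(p_i;\vec{q}_i)$; its neighborhood is $S^1$-equivariantly identical to $\mathcal{U}_i$, so $M_i=M_i^0\cup_{L_i}\mathcal{U}_i$ with $M_i^0$ free. Fixing a connection on each $\mathcal{U}_i$ and extending it to $M_0$ and to every $M_i^0$ so that all forms agree in collars of the $L_i$, the neighborhood integrals $\int_{\mathcal{U}_i}\alpha\wedge(d\alpha)^n$ take the same value whether computed inside $M$ or inside $M_i$. Subtracting, these common terms cancel and leave
\[
e(M,\psi)-\sum_{i}e(M_i,\psi_i)=\int_{M_0}\alpha\wedge(d\alpha)^n-\sum_i\int_{M_i^0}\alpha\wedge(d\alpha)^n\modz.
\]

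The crux is to show the right-hand side is an integer. For this I would glue $M_0$ to the orientation-reversed pieces $-M_i^0$ by identifying $\partial M_0=\bigsqcup_i L_i$ with $\bigsqcup_i\partial(-M_i^0)$, producing a closed, oriented, and \emph{free} $S^1$-manifold $W$. Because the chosen connections agree in the collars, they assemble into a global connection on $W$, and additivity of the integral gives $\int_W\alpha\wedge(d\alpha)^n=\int_{M_0}\alpha\wedge(d\alpha)^n-\sum_i\int_{M_i^0}\alpha\wedge(d\alpha)^n$. Since the action on $W$ is free, $E_W=W\times_{S^1}\C$ is an honest line bundle over the smooth manifold $W/S^1$, so $\langle c_1(E_W)^n,[W/S^1]\rangle\in\Z$ and $e(W,\psi_W)=0\modz$. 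Hence the free-part contributions cancel, yielding $e(M,\psi)=\sum_i e(M_i,\psi_i)\modz$, and inserting the value $e(M_i,\psi_i)=q_1(C_i)^{-1}\cdots q_n(C_i)^{-1}/p(C_i)$ from Proposition~\ref{proposition-exactlyoneexceptionalorbittype} produces exactly the claimed sum over $\mathcal{E}$.

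The main obstacle will be arranging the equivariant gluing cleanly and simultaneously: I must verify that each link $L_i$ carries the identical free $S^1$-structure on both sides (immediate from Proposition~\ref{prop-localnormalform}), that the boundary orientations match so that reversing $M_i^0$ produces a genuinely closed oriented $W$, and that the connection $1$-forms can be chosen to agree in collar neighborhoods of the $L_i$ so that integration is truly additive across the cuts. These points are routine individually, but all three must hold at once for the decomposition of $\int_W$ and the freeness of $W$ to deliver the desired vanishing.
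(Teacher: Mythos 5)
Your proposal is correct and follows essentially the same strategy as the paper: excise the exceptional neighborhoods, splice in the free complements of the one-orbit models supplied by Proposition~\ref{proposition-exactlyoneexceptionalorbittype}, and conclude from the fact that a closed free $S^1$-manifold has integral Chern number. The only difference is organizational --- you perform all replacements simultaneously and finish in one gluing step, whereas the paper replaces one exceptional orbit at a time and inducts on $|\mathcal{E}|$, with Lemma~\ref{lem_euler_number_indep_choice_of_resolution} carrying out the same ``glue two resolutions into a closed free manifold'' argument that you apply to $W$.
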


Theorem \ref{theorem-main} has particularly interesting applications when we consider a pseudo-free $S^1$-manifold $(M,\psi)$ such that 
$e(M,\psi) = 0$. In this case, our theorem gives a constraint on the local data $\mathcal{L}(M,\psi)$ given by 
\[
\sum_{C \in \mathcal{E}} \frac{q_1^{-1}(C)q_2^{-1}(C) \cdots q_n^{-1}(C)}{p(C)} \equiv 0 \modz. 
\]
As immediate applications, we can obtain the following corollaries where the proof will be given in Section~\ref{sec:applications}.

\begin{corollary}\label{corollary-thecasewheree0}
Suppose that $(M,\psi)$ is an oriented closed pseudo-free $S^1$-manifold with $e(M,\psi) = 0$. If the action is not free, then $M$ contains at least two exceptional orbits. 
If $M$ contains exactly two exceptional orbits, then they must have the same isotropic subgroup. 
\end{corollary}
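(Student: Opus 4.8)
The plan is to reduce the corollary to an elementary divisibility argument by invoking Theorem \ref{theorem-main}. Since the action is pseudo-free and not free, the set $\mathcal{E}$ of exceptional orbits is nonempty, and each $C \in \mathcal{E}$ has isotropy subgroup $\Z_{p(C)}$ with $p(C) > 1$. For each such $C$ I would write $a(C) = q_1(C)^{-1} q_2(C)^{-1} \cdots q_n(C)^{-1}$, and observe that since every $q_j(C) \in \Z_{p(C)}^{\times}$ is coprime to $p(C)$, so is the product $a(C)$; thus $a(C)$ is represented by an integer coprime to $p(C)$. Theorem \ref{theorem-main} together with the hypothesis $e(M,\psi) = 0$ then gives $\sum_{C \in \mathcal{E}} a(C)/p(C) \equiv 0 \modz$, and everything reduces to analyzing this single congruence.

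First I would rule out the case of exactly one exceptional orbit. If $\mathcal{E} = \{C\}$, the congruence reads $a(C)/p(C) \equiv 0 \modz$, i.e.\ $p(C) \mid a(C)$. But $a(C)$ is coprime to $p(C) > 1$, so this is impossible. Hence $\mathcal{E}$ must contain at least two orbits, which is the first assertion.

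Next I would treat the case $\mathcal{E} = \{C_1, C_2\}$ of exactly two orbits, abbreviating $p_i = p(C_i)$ and $a_i = a(C_i)$. Clearing denominators in $a_1/p_1 + a_2/p_2 \equiv 0 \modz$ yields $p_1 p_2 \mid a_1 p_2 + a_2 p_1$. Reducing modulo $p_1$ gives $p_1 \mid a_1 p_2$, and since $\gcd(a_1, p_1) = 1$ this forces $p_1 \mid p_2$; the symmetric computation modulo $p_2$ gives $p_2 \mid p_1$, whence $p_1 = p_2$. Because the two isotropy subgroups are the unique cyclic subgroups $\Z_{p_1}$ and $\Z_{p_2}$ of $S^1$ of their respective orders, the equality $p_1 = p_2$ is precisely the statement that $C_1$ and $C_2$ have the same isotropy subgroup.

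The argument is entirely elementary once Theorem \ref{theorem-main} is in hand, so there is no analytic obstacle to speak of; all of the geometric content has already been packaged into the closed formula for $e(M,\psi)$. The only point requiring care is the coprimality bookkeeping — specifically, that each numerator $a(C)$ is a unit modulo $p(C)$ — since it is exactly this fact that converts the divisibility relations into the desired conclusions and, in the one-orbit case, produces the contradiction.
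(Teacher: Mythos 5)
Your proposal is correct and follows essentially the same route as the paper: both invoke Theorem \ref{theorem-main} to reduce the statement to the congruence $\sum_C a(C)/p(C) \equiv 0 \modz$, use the fact that each numerator is a unit modulo $p(C)$ to rule out a single exceptional orbit, and in the two-orbit case clear denominators to deduce $p(C_1) \mid p(C_2)$ and $p(C_2) \mid p(C_1)$. Your write-up is just slightly more explicit about the coprimality bookkeeping than the paper's.
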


\begin{corollary}\label{corollary-existencerelativelynotprime}
Suppose that $(M,\psi)$ is an oriented closed pseudo-free $S^1$-manifold with $e(M,\psi) = 0$.
If $C$ is an exceptional orbit with the isotropy subgroup $\Z_p$ for some $p >1$, there exists an exceptional orbit $C' \neq C$ with the isotropy subgroup $\Z_{p'}$ for some integer $p'$ such that $\gcd(p,p') \neq 1$.
\end{corollary}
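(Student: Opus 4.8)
The plan is to deduce the statement directly from Theorem \ref{theorem-main} by an $\ell$-adic valuation argument. Since $e(M,\psi)=0$, Theorem \ref{theorem-main} gives
\[
\sum_{C' \in \mathcal{E}} \frac{q_1^{-1}(C')\cdots q_n^{-1}(C')}{p(C')} \equiv 0 \modz,
\]
so if for each $C'$ we fix an integer representative $a(C')$ of the product $q_1^{-1}(C')\cdots q_n^{-1}(C')$ taken in $\Z_{p(C')}^{\times}$, then the rational number $S := \sum_{C' \in \mathcal{E}} a(C')/p(C')$ is an \emph{integer}. Because each $q_j(C') \in \Z_{p(C')}^{\times}$ is coprime to $p(C')$, so is its inverse, and hence $a(C')$ may be chosen coprime to $p(C')$; replacing a representative by another one changes $S$ only by an integer, so the integrality of $S$ is unaffected by these choices.

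I would argue by contradiction. Suppose that every exceptional orbit $C' \neq C$ satisfies $\gcd(p, p(C')) = 1$. Fix a prime $\ell$ dividing $p$ and let $k = v_\ell(p) \geq 1$ be the exact power of $\ell$ in $p$, where $v_\ell$ denotes the $\ell$-adic valuation. The hypothesis $\gcd(p, p(C'))=1$ forces $\ell \nmid p(C')$ for every $C' \neq C$, so that $C$ is the \emph{unique} exceptional orbit whose isotropy order is divisible by $\ell$.

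Next I would read off the $\ell$-adic valuation of each summand of $S$. For $C' \neq C$ we have $v_\ell(p(C')) = 0$ and $v_\ell(a(C')) \geq 0$, so $a(C')/p(C')$ is an $\ell$-adic integer, and hence so is their sum over $C' \neq C$. For the term coming from $C$, the numerator $a(C)$ is coprime to $p$ and therefore to $\ell$, giving $v_\ell(a(C)/p) = -k < 0$. By the non-archimedean property of $v_\ell$, the strict inequality then yields $v_\ell(S) = -k < 0$, which contradicts $S \in \Z$. This contradiction proves the existence of the desired orbit $C'$ with $\gcd(p, p(C')) \neq 1$.

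The only delicate point is the bookkeeping: one must verify that the numerator $a(C)$ really is coprime to $\ell$, which rests on $\vec{q}(C) \in (\Z_p^{\times})^n$ from Proposition \ref{prop-localnormalform}, and that the passage from the congruence modulo $\Z$ to an honest integer $S$ is insensitive to the choice of integer representatives of the numerators. Once the isolation of the single $\ell$-divisible orbit $C$ is set up, the valuation computation is immediate, and I expect no further obstacle.
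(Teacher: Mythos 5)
Your proof is correct and follows essentially the same route as the paper: both argue by contradiction from the constraint $\sum_{C'} q_1^{-1}(C')\cdots q_n^{-1}(C')/p(C') \equiv 0 \modz$ of Theorem \ref{theorem-main}, isolating the term of $C$ and using that its numerator is coprime to $p$ while the other denominators are coprime to $p$. The only difference is cosmetic bookkeeping --- you track a single $\ell$-adic valuation where the paper multiplies through by $p(C_2)\cdots p(C_k)$ and observes the resulting fraction cannot be an integer.
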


Now, we illustrate two types of such examples. One is a product manifold equipped with a pseudo-free $S^1$-action.

\begin{proposition}\label{prop_euler_number_zero_product_case}
Let $M_1$ and $M_2$ be closed $S^1$-manifolds where $\dim M_1 + \dim M_2 = 2n+1$. 
Consider $M = M_1 \times M_2$ equipped with the diagonal $S^1$-action which we denote by $\psi$. 
If $\psi$ is fixed-point-free, then $e(M,\psi) = 0$. 
\end{proposition}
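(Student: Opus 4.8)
The plan is to compute $e(M,\psi)$ directly from the integral formula $e(M,\psi) = \int_M \alpha\wedge(d\alpha)^n \modz$ recorded in the introduction, using a connection $1$-form $\alpha$ that is pulled back from one of the two factors. The starting observation is that the diagonal action is fixed-point-free precisely when at least one factor action is fixed-point-free: a point $(x_1,x_2)$ is fixed by the diagonal $S^1$ exactly when $x_1\in M_1^{S^1}$ and $x_2\in M_2^{S^1}$, so $M^{S^1}=M_1^{S^1}\times M_2^{S^1}$, and this is empty iff $M_1^{S^1}=\emptyset$ or $M_2^{S^1}=\emptyset$. Relabelling the factors if necessary, I may therefore assume that the $S^1$-action on $M_2$ is fixed-point-free. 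Since a $0$-dimensional $S^1$-manifold consists of fixed points, any fixed-point-free factor is positive-dimensional; as $M=M_1\times M_2$ is a genuine product (both factors of positive dimension), this gives $\dim M_2 = (2n+1)-\dim M_1 \le 2n$, which is the one inequality the whole argument turns on.

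Next I would build the connection form. Because the action on $M_2$ is fixed-point-free, its fundamental vector field $\xi_2$ is nowhere zero, so after averaging a Riemannian metric to an $S^1$-invariant metric $g_2$ on $M_2$ I can set $\alpha_2 = g_2(\xi_2,-)/g_2(\xi_2,\xi_2)$; this is an $S^1$-invariant $1$-form on $M_2$ with $\iota_{\xi_2}\alpha_2=1$, i.e.\ a normalized connection form for the $M_2$-action. Let $\pi_2:M_1\times M_2\to M_2$ be the projection and put $\alpha:=\pi_2^*\alpha_2$. I would then verify that $\alpha$ is a normalized connection $1$-form for the diagonal action: the fundamental vector field of $\psi$ decomposes as $\xi=\xi_1+\xi_2$ with $d\pi_2(\xi_1)=0$ and $d\pi_2(\xi_2)=\xi_2$, so $\iota_\xi\alpha=\pi_2^*(\iota_{\xi_2}\alpha_2)=1$, while $S^1$-invariance of $\alpha$ follows because $\pi_2$ intertwines the diagonal action with the $M_2$-action and $\alpha_2$ is invariant. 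Since $e(M,\psi)$ is defined intrinsically as $\langle c_1(E)^n,[B]\rangle \modz$ and is independent of the chosen connection, I am free to evaluate the representative integral with this particular $\alpha$.

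Finally I would carry out the degree count. As pullback commutes with $d$ and with wedge products, $\alpha\wedge(d\alpha)^n=\pi_2^*\bigl(\alpha_2\wedge(d\alpha_2)^n\bigr)$. The form $\alpha_2\wedge(d\alpha_2)^n$ has degree $2n+1$ but lives on $M_2$, whose dimension is at most $2n$ by the first step; hence it vanishes identically, and so does its pullback. Therefore $\int_M\alpha\wedge(d\alpha)^n=0$, which yields $e(M,\psi)=0 \modz$, as claimed. The computations in the second and third paragraphs are routine; the only genuinely delicate point is the first step, namely correctly selecting a fixed-point-free factor and confirming that it is a \emph{proper} factor (so that $\deg\bigl(\alpha_2\wedge(d\alpha_2)^n\bigr)$ really does exceed $\dim M_2$) — this is exactly where the hypothesis that $M$ is a nontrivial product is used, and it is what forces the integrand to vanish rather than merely to integrate to an integer.
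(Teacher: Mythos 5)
Your proof is correct and takes essentially the same route as the paper's: pull back a normalized connection form from a fixed-point-free factor via the ($S^1$-equivariant) projection and observe that $\alpha\wedge(d\alpha)^n$ vanishes identically because its degree $2n+1$ exceeds the dimension of that factor. You are in fact slightly more careful than the paper in making explicit the (implicitly assumed) point that the fixed-point-free factor is a proper factor, i.e.\ has dimension strictly less than $2n+1$.
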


By using Theorem \ref{theorem-main} and Proposition \ref{prop_euler_number_zero_product_case}, we can prove the following. 
\begin{corollary}\label{cor_localization_isolated}
Let $(M,J)$ be a closed almost complex $S^1$-manifold. Suppose that the action preserves $J$ and that there are only isolated fixed points. 
Then, 
\[
\sum_{z \in M^{S^1}} \frac{1}{\prod_{i=1}^n q_i(z)} = 0
\] 
where $q_1(z), \cdots, q_n(z)$ are the weights of the $S^1$-representation on $T_zM$. 
\end{corollary}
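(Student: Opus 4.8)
The plan is to realize the desired sum as the $e$-invariant of an auxiliary $(2n+1)$-dimensional pseudo-free manifold built from $M$, and then to play Theorem~\ref{theorem-main} against Proposition~\ref{prop_euler_number_zero_product_case}. Concretely, I would set $\tilde M = M \times S^1$ and let $S^1$ act diagonally by $t\cdot(x,w) = (t\cdot x,\, t^m w)$, where $t^m w$ denotes the weight-$m$ rotation on the circle factor $S^1=\{|w|=1\}\subset\C$ and $m$ is a positive integer to be chosen. Since $M$ is closed almost complex (hence oriented) and $S^1$ is closed oriented, $\tilde M$ is a closed oriented $(2n+1)$-manifold, and the diagonal action is exactly of the type covered by Proposition~\ref{prop_euler_number_zero_product_case}.

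First I would choose $m$ well. A point $(x,w)$ is fixed only if $t\cdot x=x$ and $t^m=1$ for all $t$, which is impossible for $m\ge 1$; so the action is fixed-point-free. The isotropy of $(x,w)$ is $\mathrm{Stab}_M(x)\cap\Z_m$. On the compact manifold $M$ only finitely many finite isotropy orders occur, and near an isolated fixed point $z$ with weights $q_1(z),\dots,q_n(z)$ the $i$-th coordinate axis carries isotropy of order $|q_i(z)|$; hence each $|q_i(z)|$ is among those orders. I would therefore take $m$ to be a prime exceeding all finite isotropy orders of $M$, so that $m$ is coprime to every $q_i(z)$. Then every non-fixed point of $M$ yields a free $\tilde M$-orbit, while each $z\in M^{S^1}$ contributes exactly one exceptional orbit $C_z=\{z\}\times S^1$ with isotropy $\Z_m$; thus $\tilde M$ is pseudo-free. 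By Proposition~\ref{prop-localnormalform} the normal slice of $C_z$ is $T_zM$ with $S^1$-weights $q_i(z)$, so its weight-vector is $\vec q(C_z)=(q_1(z),\dots,q_n(z)) \bmod m \in (\Z_m^\times)^n$.

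Next I would apply both results. Proposition~\ref{prop_euler_number_zero_product_case} gives $e(\tilde M,\psi)=0$, while Theorem~\ref{theorem-main} evaluates the same invariant as
\[
e(\tilde M,\psi) = \sum_{z\in M^{S^1}} \frac{Q_z^{-1}}{m} \modz, \qquad Q_z = \prod_{i=1}^n q_i(z),
\]
where $Q_z^{-1}$ is the inverse of $Q_z$ modulo $m$. Equating the two yields $\sum_z Q_z^{-1}\equiv 0 \pmod m$. To convert this into the asserted rational identity, set $P=\prod_z Q_z$ and $A=\sum_z P/Q_z = P\sum_z 1/Q_z \in\Z$. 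Since $Q_z\cdot(P/Q_z)=P$ and $Q_zQ_z^{-1}\equiv 1$, we have $P Q_z^{-1}\equiv P/Q_z \pmod m$, so multiplying the congruence by $P$ gives $A\equiv 0 \pmod m$. As $A$ is a fixed integer independent of $m$ while the congruence holds for all admissible primes $m$ (which are unbounded), we conclude $A=0$, hence $\sum_z 1/Q_z=0$, which is the claim.

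The main obstacle I anticipate is the bookkeeping that makes the reduction legitimate: arranging a single choice of $m$ that simultaneously (i) kills all stray finite isotropy so that $\tilde M$ is genuinely pseudo-free, and (ii) keeps every weight $q_i(z)$ invertible mod $m$ so that the weight-vectors lie in $(\Z_m^\times)^n$ and Theorem~\ref{theorem-main} applies verbatim. Once this is set up, the passage from the mod-$\Z$ congruence to the exact equality is the only genuinely arithmetic step, and it is handled by the divisibility argument above together with the freedom to let $m\to\infty$. A minor point to verify is orientation compatibility between the complex orientation on each $T_zM$ and the conventions behind Theorem~\ref{theorem-main}; since all fixed points are treated by the same convention, this at worst introduces a global sign and does not affect the vanishing conclusion.
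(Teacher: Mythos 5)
Your proposal is correct and follows essentially the same route as the paper: the paper also forms $M \times S^1$ with the action $t\cdot(x,w) = (t\cdot x, t^p w)$ for an arbitrarily large prime $p$, applies Proposition~\ref{prop_euler_number_zero_product_case} and Theorem~\ref{theorem-main} to get $\sum_{z} q_1(z)^{-1}\cdots q_n(z)^{-1} \equiv 0$ in $\Z_p$, and then derives the exact vanishing by an arithmetic argument (phrased there as a contradiction with a nonzero fraction $a/b$, versus your clearing of denominators and letting the prime grow). Your extra care in choosing $m$ coprime to all finite isotropy orders, not just the weights at fixed points, is a welcome tightening of the pseudo-freeness check.
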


\begin{remark}
Note that Corollary \ref{cor_localization_isolated} also can be obtained by the ABBV-localization theorem (see Section 5 for the detail). Thus 
the authors expect that there would be some equivariant cohomology theory which covers both the odd-dimensional theory (Theorem \ref{theorem-main})
and the even-dimensional theory (ABBV-localization theorem). This work is still in progress.
\end{remark}

The other type of examples comes from equivariant symplectic geometry as follows.
Recall that for a given symplectic $S^1$-action $\psi$ on a closed symplectic manifold $(M,\omega)$ where $[\omega] \in H^2(M;\Z)$, 
there exists an $S^1$-invariant map $\mu : M \rightarrow \R / \Z\cong S^1$ called a {\em generalized moment map} defined by 
\[
\mu(x) := \int_{\gamma_x}
i_{\underline{X}}\omega ~\modz
\]
where $x_0$ is a base point, and $\gamma_x$ is any path $\gamma_x : [0,1] \rightarrow M$ such that $\gamma_x(0) = x_0$ and $\gamma_x(1) = x$.
When $\psi$ has no fixed point, then $M$ becomes a fiber bundle over $S^1$ via $\mu$ (see \cite{CKS} for the details). 

\begin{proposition}\label{prop_euler_number_zero_symplectic_case}
	Let $(M,\omega)$ be a closed symplectic manifold equipped with a fixed-point-freee $S^1$-action $\psi$ preserving $\omega$. 
	Let $\mu : M \rightarrow \R/\Z$ be a generalized moment map and let $F_\theta = \mu^{-1}(\theta)$ for $\theta\in \R/\Z$.
	Then $e(F_\theta, \psi|_{F_\theta}) = 0$. 
\end{proposition}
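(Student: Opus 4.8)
The plan is to reduce $e(F_\theta,\psi|_{F_\theta})$ to a single top-degree integral over the ambient quotient orbifold and then kill it with Stokes' theorem, the symplectic form entering only through the identity $\mu^*d\theta = i_{\underline X}\omega$. Write $\underline X$ for the vector field generating $\psi$. Since $\psi$ has no fixed point, $\underline X$ is nowhere zero, so $X := M/S^1$ is a closed $(2n+1)$-dimensional orbifold, $q : M \to X$ the quotient, and $E = M\times_{S^1}\C \to X$ a complex line orbibundle whose first Chern class is represented by the Chern--Weil form $\Theta_\alpha$ of a normalized connection $\alpha$ on $M$ (so $\alpha(\underline X)=1$ and $d\alpha = q^*\Theta_\alpha$). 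The generalized moment map is $S^1$-invariant, hence descends to $\bar\mu : X \to \R/\Z$ with $\mu = \bar\mu\circ q$; because $d\mu = i_{\underline X}\omega$ is nowhere zero, both $\mu$ and $\bar\mu$ are submersions, every $\theta$ is a regular value, and $B_\theta := F_\theta/S^1 = \bar\mu^{-1}(\theta)$ is a closed $2n$-dimensional suborbifold of $X$ on which $E$ restricts to the orbibundle $F_\theta\times_{S^1}\C$ defining $e(F_\theta)$. In particular all $F_\theta$ are $S^1$-equivariantly diffeomorphic, so it suffices to treat one (equivalently every) regular level. Since $\Theta_\alpha^n$ is closed, $t\mapsto\int_{B_t}\Theta_\alpha^n$ is constant, and fiber integration along $\bar\mu$ (equivalently, $\bar\mu^*d\theta$ represents the fiber class) gives
\[
e(F_\theta)\equiv\int_{B_\theta}\Theta_\alpha^{\,n}=\int_X\Theta_\alpha^{\,n}\wedge\bar\mu^*d\theta \pmod{\Z}.
\]

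The heart of the argument is a single cancellation produced by the symplectic form. Using $\mu^*d\theta = i_{\underline X}\omega$, I would set
\[
\sigma := \omega-\alpha\wedge\mu^*d\theta .
\]
A direct check gives $i_{\underline X}\sigma = i_{\underline X}\omega-(i_{\underline X}\alpha)\,\mu^*d\theta+\alpha\,(i_{\underline X}\mu^*d\theta)=i_{\underline X}\omega-\mu^*d\theta=0$, and $\mathcal L_{\underline X}\sigma=0$ because $\omega$, $\alpha$ and $\mu^*d\theta$ are all $S^1$-invariant; hence $\sigma$ is basic and descends to a $2$-form $\bar\sigma$ on $X$. Since $d\omega=0$ and $d\mu^*d\theta=0$, one computes $d\sigma=-d\alpha\wedge\mu^*d\theta=-q^*\bigl(\Theta_\alpha\wedge\bar\mu^*d\theta\bigr)$, that is,
\[
d\bar\sigma=-\,\Theta_\alpha\wedge\bar\mu^*d\theta .
\]
This is exactly the statement that $\Theta_\alpha\wedge\bar\mu^*d\theta$ is exact on $X$, and it is the one place where the symplectic (rather than merely topological) structure is used.

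Finally I would conclude with Stokes' theorem on the closed orbifold $X$. For $n\ge 1$, using $d\Theta_\alpha=0$,
\[
\int_X\Theta_\alpha^{\,n}\wedge\bar\mu^*d\theta
=-\int_X\Theta_\alpha^{\,n-1}\wedge d\bar\sigma
=-\int_X d\bigl(\Theta_\alpha^{\,n-1}\wedge\bar\sigma\bigr)=0 ,
\]
so $e(F_\theta,\psi|_{F_\theta})=0$; in fact the representing integral vanishes identically, not merely its class modulo $\Z$, and the argument does not invoke Theorem~\ref{theorem-main}. The case $n=0$ is trivial. I expect the principal obstacle to be formal rather than conceptual: one must set up de Rham theory, fiber integration, and Stokes' theorem for the (possibly singular) quotient orbifolds $X$ and $B_\theta$, and justify that $\bar\mu^*d\theta$ represents the fiber class and that $\sigma$ descends to a smooth orbifold form. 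Once this orbifold formalism is in place, the only substantive input is the identity $\mu^*d\theta=i_{\underline X}\omega$ furnished by the generalized moment map, which is precisely what forces $\Theta_\alpha\wedge\bar\mu^*d\theta$ to be exact.
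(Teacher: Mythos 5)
Your proof is correct, but it takes a genuinely different route from the paper's. The paper deduces the vanishing from the Duistermaat--Heckman theorem: since the action is fixed-point-free, the Duistermaat--Heckman function is defined on all of $\R/\Z$ and is locally a polynomial whose leading coefficient is the Chern number $\int_{M_\theta}c_1(F_\theta,\psi_\theta)^{\dim M_\theta/2}$; a periodic polynomial is constant, so this leading coefficient vanishes. You instead prove the same vanishing by bare hands: the basic $2$-form $\sigma=\omega-\alpha\wedge\mu^*d\theta$ satisfies $d\bar\sigma=-\Theta_\alpha\wedge\bar\mu^*d\theta$, so $\Theta_\alpha^{\,n}\wedge\bar\mu^*d\theta$ is exact and Stokes' theorem on the closed quotient kills the integral. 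This is essentially the minimal-coupling computation that underlies the proof of the Duistermaat--Heckman variation formula, so your argument can be viewed as unpacking the paper's black box; what it buys is self-containedness (no symplectic reduction or DH machinery) at the cost of the orbifold formalities you flag at the end. Those formalities can in fact be avoided entirely by running your computation upstairs on $M$ with the paper's convention $\langle c_1^n,[B]\rangle=\int\alpha\wedge(d\alpha)^n$: one has
\begin{equation*}
d\bigl(\alpha\wedge(d\alpha)^{n-1}\wedge\sigma\bigr)=(d\alpha)^n\wedge\sigma+\alpha\wedge(d\alpha)^n\wedge d\mu,
\end{equation*}
and $(d\alpha)^n\wedge\sigma$ is a horizontal form of top degree on $M$, hence identically zero because $\underline{X}$ is nowhere vanishing; integrating over the closed manifold $M$ and using the coarea identity $\int_M\alpha\wedge(d\alpha)^n\wedge d\mu=\int_{F_\theta}\alpha\wedge(d\alpha)^n$ gives the result with no orbifold integration at all. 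Like the paper's argument, yours proves the stronger statement that the Chern number itself vanishes, not merely its class modulo $\Z$, and neither proof needs Theorem \ref{theorem-main}. The only caveats are notational: you implicitly take $\dim M=2n+2$ where the surrounding text of the paper uses $2n$, and you should cite Lemma \ref{lemma_dmu} for the identity $\mu^*d\theta=d\mu=i_{\underline{X}}\omega$, which is the single point where the symplectic structure and the integrality of $[\omega]$ enter.
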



Finally, here we discuss the Weinstein's theorem \cite[Theorem 1]{W} and pose some conjecture. 
\begin{theorem}\cite{W}\label{theorem_weinstein}
Let $(M,\psi)$ be a $(2n+1)$-dimensional closed oriented fixed-point-free $S^1$-manifold. Let $\alpha$ be a normalized connection 1-form on $M$. 
Then 
\[
\ell^n \cdot \int_M \alpha \wedge (d\alpha)^n \in \Z
\]
where $\ell$ is the least common multiple of the orders of the isotropy subgroups of the points in $M$.
\end{theorem}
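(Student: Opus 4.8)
The plan is to clear the denominators in the rational number $\int_M \alpha\wedge(d\alpha)^n = \langle c_1(E)^n,[B]\rangle$ (the identity recalled in the Introduction) by passing to a tensor power of the line orbibundle $E$ and showing that this power is an \emph{honest} line bundle. Write $\ell$ for the least common multiple of the orders of the isotropy subgroups. Since $c_1(E^{\otimes \ell}) = \ell\, c_1(E)$ in orbifold cohomology, we have
\[
\langle c_1(E^{\otimes \ell})^n,[B]\rangle = \ell^n \langle c_1(E)^n,[B]\rangle = \ell^n \int_M \alpha\wedge(d\alpha)^n,
\]
so it suffices to prove that the left-hand side is an integer.

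The key observation is that $E^{\otimes\ell}$ descends to a genuine complex line bundle on the coarse moduli space $|B|$ underlying the orbifold $B = M/S^1$. Indeed, fix an orbit with isotropy $\Z_p$ (so $p\mid\ell$) and a uniformizing chart $\tilde U = V_x$, $\Gamma = \Z_p$, $U = \tilde U/\Gamma\subset|B|$ supplied by the slice theorem. Over $U$ the orbibundle $E$ lifts to $\tilde U\times\C$ on which $\Gamma$ acts by its slice action on $\tilde U$ and by the weight-one representation $z\mapsto t z$ on the fibre $\C$, because in $E = M\times_{S^1}\C$ one has $[x,z] = [t\cdot x, t z]$. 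Hence on $E^{\otimes\ell}$ the generator of $\Gamma$ acts on the fibre by $t^\ell = 1$; the $\Gamma$-action on the fibres is trivial, the lift is $\Gamma$-equivariantly $(\tilde U\times\C)/\Gamma = U\times\C$, and the transition cocycle consists of $\Gamma$-invariant $\C^{\times}$-valued functions. Gluing these charts produces a topological line bundle $L$ on $|B|$ with $\pi^* L = E^{\otimes\ell}$, where $\pi\colon B\to|B|$ is the natural projection.

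Now $|B|$ is a closed oriented pseudomanifold (the coarse space of a closed oriented orbifold; its singular strata have real codimension $\ge 2$), so it carries a fundamental class $[|B|]\in H_{2n}(|B|;\Z)$, and $c_1(L)^n\in H^{2n}(|B|;\Z)$ pairs with it to give an integer $\langle c_1(L)^n,[|B|]\rangle$. It remains to identify this topological integer with the Chern--Weil quantity: since $c_1(E^{\otimes\ell}) = \pi^* c_1(L)$ and $\pi$ is a diffeomorphism over the (full-measure) smooth locus $B^\circ = |B|\setminus\Sigma$, orbifold integration gives $\langle c_1(E^{\otimes\ell})^n,[B]\rangle = \int_{|B|\setminus\Sigma}\omega^n$, where $\omega$ is the curvature of a smooth connection on $L|_{|B|\setminus\Sigma}$.

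The main obstacle is the final equality $\int_{|B|\setminus\Sigma}\omega^n = \langle c_1(L)^n,[|B|]\rangle$ on the singular space $|B|$. I would prove it by excising an $\epsilon$-tubular neighbourhood $N_\epsilon(\Sigma)$ of the singular set, comparing $\int_{|B|\setminus N_\epsilon(\Sigma)}\omega^n$ with the evaluation of the integral class $c_1(L)^n$ against $[|B|]$ via the de Rham theorem on the compact manifold-with-boundary $|B|\setminus N_\epsilon(\Sigma)$, and letting $\epsilon\to 0$; since $\operatorname{codim}\Sigma\ge 2$ and $\omega$ is bounded, the error terms supported near $\partial N_\epsilon(\Sigma)$ are $O(\operatorname{vol}\partial N_\epsilon(\Sigma))\to 0$ and do not contribute. (Alternatively, in keeping with the theme of this paper, one may resolve $|B|$ to a smooth closed oriented manifold $\tilde B$ with a degree-one map $\rho\colon\tilde B\to|B|$ that is an isomorphism over $B^\circ$; then the projection formula gives $\langle c_1(\rho^* L)^n,[\tilde B]\rangle = \langle c_1(L)^n,[|B|]\rangle$, and the left side is a Chern number of a smooth closed manifold, hence an integer equal to $\int_{\tilde B}\rho^*\omega^n$.) This establishes $\ell^n\int_M\alpha\wedge(d\alpha)^n\in\Z$. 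As a consistency check, when $\psi$ is pseudo-free Theorem~\ref{theorem-main} shows each local contribution has denominator $p(C)\mid\ell$, so in fact already $\ell\int_M\alpha\wedge(d\alpha)^n\in\Z$, which is strictly sharper than Weinstein's $\ell^n$ in that case.
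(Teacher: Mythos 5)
The paper does not prove this statement at all: Theorem~\ref{theorem_weinstein} is imported verbatim from Weinstein \cite{W} as a citation, so there is no internal proof to compare against. Your argument is, in outline, essentially Weinstein's original one: show that $\ell\,c_1(E)$ is the image of an integral class on the underlying space $|B|$ and then observe that an $n$-fold cup product of integral classes pairs integrally with the fundamental class. The descent step is correct as you set it up --- the isotropy group $\Z_p$ does act on the fibre of $E$ with weight one, so it acts trivially on the fibre of $E^{\otimes\ell}$, and the transition data becomes $\Gamma$-invariant; moreover your claim that the singular strata have codimension at least two is justified because each isotropy representation is orientation-preserving (it lies in the image of the connected group $S^1$), which forces the non-fixed part of each slice to be even-dimensional. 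The reduction $\langle c_1(E^{\otimes\ell})^n,[B]\rangle=\ell^n\int_M\alpha\wedge(d\alpha)^n$ via the induced connection is also fine.

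The one step you rightly flag as the ``main obstacle'' is also the one place your write-up is not yet airtight. The excision argument as stated compares the integral of the absolute de Rham representative $\omega^n$ over $|B|\setminus N_\epsilon(\Sigma)$ with the pairing of $c_1(L)^n$ against a \emph{relative} fundamental class; to control the discrepancy you need a primitive of the difference between $\omega^n$ and a simplicial/\v{C}ech representative of $c_1(L)^n$ near $\Sigma$ and a bound on its integral over $\partial N_\epsilon(\Sigma)$, which does not follow merely from $\omega$ being bounded and $\operatorname{codim}\Sigma\ge 2$. The resolution alternative needs an existence statement for a degree-one smooth model of $|B|$ when $\Sigma$ is positive-dimensional; the toric resolution invoked elsewhere in this paper (Lemma~\ref{lem_extend_one_exceptional_orbit}) only treats isolated cyclic quotient singularities. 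The clean way to close this gap is Satake's de Rham theorem for $V$-manifolds: $H^*_{dR}(B)\cong H^*(|B|;\R)$ as rings, compatibly with integration against $[|B|]$, under which $[\ell\Theta_\alpha]$ is the real image of $c_1(L)\in H^2(|B|;\Z)$. Your closing consistency check agrees with the remark the paper makes immediately after the theorem, namely that Theorem~\ref{theorem-main} sharpens $\ell^n$ to $\ell$ in the pseudo-free case.
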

Let $(M / S^1)_{\text{sing}}$ be the set of singular points in $M / S^1$.
Our main theorem \ref{theorem-main} implies that if $\dim (M / S^1)_{\text{sing}} = 0$, then we have 
\[
	\ell \cdot \int_M \alpha \wedge (d\alpha)^n \in \Z.
\]
We pose the following conjecture. 
\begin{conjecture}\label{conj_weinstein}
	Under the same assumption of Theorem \ref{theorem_weinstein}, we have 
	\[
		\ell^{k+1} \cdot \int_M \alpha \wedge (d\alpha)^n \in \Z.
	\]
	where $k = \dim (M / S^1)_{\text{sing}}$. 
\end{conjecture}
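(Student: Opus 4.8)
\textbf{Proof proposal for Conjecture \ref{conj_weinstein}.}

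The plan is to reduce the integrality of $\ell^{k+1}\int_M \alpha\wedge(d\alpha)^n$ to a local analysis along the strata of the singular set $(M/S^1)_{\text{sing}}$, stratified by the order of the isotropy subgroup. The case $k=0$ is settled by Theorem \ref{theorem-main}, since there $\ell\cdot\int_M\alpha\wedge(d\alpha)^n\in\Z$ follows from clearing the denominators $p(C)$ (all of which divide $\ell$) in the formula for $e(M,\psi)$. For general $k$, the key idea is that the Chern number $\int_M\alpha\wedge(d\alpha)^n=\langle c_1(E)^n,[B]\rangle$ of the orbibundle $E\to B=M/S^1$ localizes to contributions supported near the singular locus, and that each positive-dimensional stratum of isotropy order $m\mid\ell$ should contribute a rational number whose denominator is controlled by a power of $m$ no larger than $m^{k+1}$, hence divides $\ell^{k+1}$.

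First I would set up a stratification $B_{\text{sing}}=\bigsqcup_\lambda \Sigma_\lambda$ where each locally closed stratum $\Sigma_\lambda$ consists of orbits with a fixed isotropy order $m_\lambda>1$, so that along $\Sigma_\lambda$ the orbibundle $E$ carries a cyclic orbifold structure of uniform order $m_\lambda$. Using Proposition \ref{prop-localnormalform}, near each orbit of such a stratum the action is linearized, with weight-vector entries in $\Z_{m_\lambda}^\times$; this gives an $S^1$-equivariant normal form for a tubular neighborhood $\mathcal{U}_\lambda$ of the preimage in $M$. The strategy is then to excise these tubular neighborhoods and write $\int_M\alpha\wedge(d\alpha)^n$ as an integral over the free part (an integer after scaling, by Weinstein's theorem with trivial isotropy) plus boundary/local contributions $\int_{\mathcal{U}_\lambda}\alpha\wedge(d\alpha)^n$. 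The crux is to show that the local contribution of a $d$-dimensional singular stratum of order $m$ has denominator dividing $m^{d+1}$: one integrates the transverse model (a lens-space-type fibration of order $m$ over $\Sigma_\lambda$) fiberwise, where each transverse factor contributes a single power of $m$ in the denominator as in the isolated case, and the integration over the $d$ further independent directions of $\Sigma_\lambda$ that meet other isotropy data can contribute at most $d$ additional powers of $m$ through the orbifold structure; summing $d\le k$ gives the exponent $k+1$.

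Concretely, I would proceed inductively on $k$. For the inductive step, choose a stratum $\Sigma_\lambda$ of maximal dimension $k$ and isotropy order $m$. Passing to the $m$-fold fiberwise cover of $E$ along $\mathcal{U}_\lambda$ resolves the cyclic orbifold singularity in the transverse direction, turning $E|_{\mathcal{U}_\lambda}$ into an honest (smooth) line bundle after one multiplication by $m$; the remaining singular directions tangent to $\Sigma_\lambda$ are themselves stratified with singular set of dimension at most $k-1$, to which the inductive hypothesis applies with exponent $k$. Multiplying by $m\le\ell$ to clear the transverse order and by $\ell^{k}$ to clear the tangential contributions by induction yields the factor $\ell\cdot\ell^{k}=\ell^{k+1}$, as desired. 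The base case $k=0$ is exactly Theorem \ref{theorem-main}.

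\textbf{Main obstacle.} The hard part will be making the fiberwise/tangential separation of denominators rigorous, i.e. proving that the tangential directions of a singular stratum genuinely contribute no more than one power of the isotropy order $m$ \emph{each}, rather than collectively introducing denominators governed by the least common multiple $\ell$ in an uncontrolled way. This requires a careful orbifold Chern–Weil computation in the normal form of Proposition \ref{prop-localnormalform} adapted to positive-dimensional strata (where the weight-vectors may vary along $\Sigma_\lambda$ and where adjacent strata of different orders meet), together with a gluing argument ensuring the boundary terms produced by excision are integral after multiplication by the appropriate power of $\ell$. Controlling the interaction of strata of distinct isotropy orders at their common closure is the most delicate point, and is precisely where the single worked case $k=0$ gives no guidance.
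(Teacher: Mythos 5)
First, a point of comparison you could not have known: the statement you are proving is posed in the paper as an open \emph{conjecture}, not a theorem. The paper proves only the case $k=0$ (which is immediate from Theorem \ref{theorem-main}, exactly as in your base case, since every $p(C)$ divides $\ell$) and verifies the case of linear actions on odd spheres via Proposition \ref{prop_Chern_number_sphere}; there is no proof of the general case to compare yours against. So the question is whether your sketch closes the gap the authors left open, and it does not. You concede the decisive point yourself: the claim that a $d$-dimensional stratum of isotropy order $m$ contributes a denominator dividing $m^{d+1}$ --- i.e.\ that each tangential direction of the stratum costs at most one extra power of $m$ --- is asserted heuristically, and your ``main obstacle'' paragraph is precisely an admission that this, which \emph{is} the conjecture, remains unproved. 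An induction whose inductive step is the statement being proved is a program, not a proof.

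Beyond the admitted gap there are two concrete errors in the setup. First, along a positive-dimensional stratum of isotropy order $m$ the slice weights are \emph{not} units in $\Z_m$: the directions tangent to the fixed submanifold $M^{\Z_m}$ carry weight $\equiv 0 \modz[m]$, so your claim that the normal form of Proposition \ref{prop-localnormalform} yields weight-vectors in $(\Z_{m}^{\times})^n$ along $\Sigma_\lambda$ fails, and with it the direct transplantation of the paper's pseudo-free machinery (Lemma \ref{lem_3dim}, Lemma \ref{lem_extend_one_exceptional_orbit}, and the resolution arguments, which rely on \emph{isolated} toroidal singularities via \cite{KKMS}) to positive-dimensional strata. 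Second, the covering argument proves too little: making $E^{\otimes m}$ an honest bundle transversally controls the integrality of $m\,c_1$ as a class, but clearing denominators in the pairing $\langle c_1^n,[B]\rangle$ then gives only $m^n\int_M \alpha\wedge(d\alpha)^n\in\Z$ --- which is Weinstein's Theorem \ref{theorem_weinstein}, already known, not the conjectured exponent $k+1$. To do better you would need the tangential/transverse splitting of powers of $c_1$ to behave as in a product, which fails in general: the normal data can twist and the weights can be permuted by monodromy along $\Sigma_\lambda$, and closures of strata of different orders intersect. In short, your proposal correctly identifies where the difficulty lives, reproduces the known cases, and is a reasonable research plan, but it does not constitute a proof, and as far as the paper is concerned none exists.
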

It is obvious that Conjecture \ref{conj_weinstein} is true when $k=0$ by Theorem \ref{theorem-main}. 
One can verify that Conjecture \ref{conj_weinstein} is true when $M$ is an odd-dimensional sphere with a fixed-point-free linear $S^1$-action 
(see Proposition \ref{prop_Chern_number_sphere}). 

This paper is organized as follows. 
In Section 2, we define a {\em local data} for a fixed-point-free $S^1$-action. In Section 3, we define a Chern class of a closed fixed-point-free $S^1$-manifold and give the explicit
computation of the Chern class of an odd-dimensional sphere equipped with a linear $S^1$-action. In Section 4, we give the complete proof of Proposition \ref{proposition-exactlyoneexceptionalorbittype} and Theorem \ref{theorem-main}. Finally in Section 5, we discuss several applications of Theorem \ref{theorem-main} and 
give the proofs of Corollary \ref{corollary-thecasewheree0}, \ref{corollary-existencerelativelynotprime}, and \ref{cor_localization_isolated}. 
Also, we deal with the examples illustrated above and give the complete proof of Proposition \ref{prop_euler_number_zero_product_case} and \ref{prop_euler_number_zero_symplectic_case}.

\section{Local invariants}

The main purpose of this section is to define a {\em local invariant} for each exceptional orbit, which is invariant under $S^1$-equivariant diffeomorphisms. To do this, we first describe a neighborhood of each orbit. 

\begin{theorem}[Slice theorem]\label{theorem-slice_theorem}\cite{Au}
Let $G$ be a compact Lie group acting on a manifold $M$. Let $x \in M$ be a point whose isotropy subgroup is $H$. 
Then there exist a $G$-equivariant tubular neighborhood $\U$ of the orbit $G\cdot m$ and a 
$G$-equivariant diffeomorphism 
\[
G \times_H V_x  \rightarrow \U
\]
where $G$ acts on $G \times_H V_x$ by 
\[
g \cdot [g',v] = [gg',v]
\]
for every $g \in G$ and $[g',v] \in G \times_H V_x$. 
Here $V_x$, called a {\em slice} at $x$, is the vector space $T_xM / T_x(G\cdot x)$ with the linear $H$-action induced by the $G$-action on $T_xM$. 
\end{theorem}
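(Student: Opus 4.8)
The plan is to prove the statement by the classical method of producing a $G$-invariant Riemannian metric and then exponentiating along the normal bundle of the orbit. First I would average an arbitrary Riemannian metric on $M$ over $G$ with respect to the normalized Haar measure; since $G$ is compact, this integration converges and yields a $G$-invariant metric $g$ for which every element of $G$ acts as an isometry. Because $G$ is compact the orbit $G\cdot x$ is a compact, hence closed, embedded submanifold, and at the point $x$ the isotropy group $H$ acts linearly on $T_xM$ preserving the $g$-orthogonal splitting
\[
T_xM = T_x(G\cdot x)\oplus N_x,
\]
where $N_x$ is the orthogonal complement. The projection $T_xM \to T_xM/T_x(G\cdot x)=V_x$ then restricts to an $H$-equivariant linear isomorphism $N_x\cong V_x$, so the slice representation is realized concretely as $N_x$ inside $T_xM$.

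Next I would identify the normal bundle $\nu(G\cdot x)$ of the orbit with the associated bundle $G\times_H V_x$. The orbit map $g\mapsto g\cdot x$ exhibits $G\to G\cdot x$ as a principal $H$-bundle with $G\cdot x\cong G/H$, and the fiber of $\nu(G\cdot x)$ over $g\cdot x$ is $g\cdot N_x$. The assignment $[g,v]\mapsto g\cdot v$ therefore gives a $G$-equivariant bundle isomorphism $G\times_H V_x \xrightarrow{\sim} \nu(G\cdot x)$, where $G$ acts on the left factor of $G\times_H V_x$ by $g\cdot[g',v]=[gg',v]$, exactly as in the statement. This is the step that makes the prescribed $G$-action transparent.

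Then I would invoke the exponential map $\exp^g:\nu(G\cdot x)\to M$ of the invariant metric. Since $G$ acts by isometries it commutes with $\exp^g$, so $\exp^g$ is $G$-equivariant. By the tubular neighborhood theorem $\exp^g$ restricts to a diffeomorphism from an open neighborhood of the zero section of $\nu(G\cdot x)$ onto an open neighborhood $\U$ of $G\cdot x$; since the zero section is $G$-invariant and the orbit is compact, this neighborhood may be taken $G$-invariant (for instance by restricting to vectors of length below a uniform injectivity radius along the compact orbit). Composing with the identification $G\times_H V_x\cong\nu(G\cdot x)$ produces the desired $G$-equivariant diffeomorphism $G\times_H V_x\to\U$.

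The main obstacle I expect is purely in the bookkeeping of equivariance rather than in any hard analysis: one must check that the orthogonal splitting at $x$ is $H$-stable, so that $V_x$ inherits a well-defined linear $H$-action; that the associated-bundle map intertwines the two $G$-actions correctly; and that the domain on which $\exp^g$ is a diffeomorphism can be shrunk $G$-equivariantly. Compactness of $G$, and hence of the orbit $G\cdot x$ and of the isotropy group $H$, is what drives every step: it supplies the invariant metric by averaging, forces the orbit to be a closed embedded submanifold, and guarantees a uniform tubular neighborhood, so no additional hypotheses are required.
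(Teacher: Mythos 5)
The paper does not prove this statement; it is quoted from Audin's book with a citation, and the argument given there is exactly the one you outline (average a metric over the compact group, identify the slice with the orthogonal complement $N_x$ of $T_x(G\cdot x)$, identify $G\times_H V_x$ with the normal bundle of the orbit, and exponentiate equivariantly on a uniform neighborhood of the zero section). Your proposal is correct and follows essentially the same standard route, including the key equivariance checks.
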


In our case, $G=S^1$ and the isotropy subgroup $H$ of $x$ is isomorphic to $\Z_p$ for some $p\ge 1$ if $x$ is not fixed by the $S^1$-action. 
The following lemma will be used frequently throughout this paper.

\begin{lemma}\label{lemma-diffeomorphismontoproductspace}
Let $m > 1$ be a positive integer and let $(w_0, w_1, \cdots, w_n)$ be the coordinate system of $S^1 \times \C^n$. 
Define an $S^1$-action on $S^1 \times \C^n$ given by 
\begin{align*}
t\cdot(w_0,w_1,\dots,w_n) &= (t^{x_0} w_0, t^{x_1}w_1,\dots, t^{x_n}w_n)
\end{align*}
for some $(x_0, x_1, \cdots, x_n) \in \Z^{n+1}$ with $\gcd(x_0, m) = 1$. 
Similarly, for $\xi = e^{\frac{2 \pi i}{m}}$, define a $\Z_m$-action on $S^1 \times \C^n$ by
\begin{align*}
\xi\cdot(w_0, w_1,\dots, w_n) &= (\xi^{m_0} w_0, \xi^{m_1}w_1,\dots, \xi^{m_n}w_n)
\end{align*}
for some $(m_0, m_1, \cdots, m_n) \in \Z^{n+1}$ with $\gcd(m,m_0) = 1$. 
Then, 
\begin{enumerate}
	\item the $S^1$-action and the $\Z_m$-action commutes, 
	\item the $\Z_m$-quotient $S^1\times_{\Z_m}\C^n$ with the induced $S^1$-action 
	is $S^1$-equivariantly diffeomorphic to $S^1\times\C^n$ with an $S^1$-action given by
	\[
	t\cdot(z_0, z_1,\dots, z_n) = (t^{x_0m}z_0,t^{-x_0a_1+x_1} z_1,\dots, t^{-x_0a_n+x_n} z_n),
	\]
	where $a_i= m_0^{-1}m_i$ modulo $m$, and 
	\item if $\Z_m$ act as a subgroup of $S^1$ on $S^1 \times \C^n$, or equivalently, if $m_i=x_i$ for every $i=0,1,\cdots, n$, then $S^1\times_{\Z_m} \C^n$ with the 	
	induced $S^1 / \Z_m$-action is equivariantly diffeomorphic to $S^1 \times \C^n$ with an $S^1$-action given by
	\[
	t\cdot(z_0,z_1,\dots, z_n) = (t^{x_0} z_0, t^{s_i}z_1,\dots,t^{s_n}z_n),
	\]
	where $s_i=m^{-1}x_i$ modulo $x_0$.
\end{enumerate}
\end{lemma}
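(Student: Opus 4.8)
The plan is to handle the three assertions in order, with the whole content concentrated in part (2); parts (1) and (3) then follow from a one-line check and a reparametrization, respectively. For (1), I would simply note that both actions act diagonally by scalar multiplication on each coordinate: the $S^1$-action multiplies $w_j$ by $t^{x_j}$ and the $\Z_m$-action multiplies $w_j$ by $\xi^{m_j}$ (in the group $S^1$ for $j=0$ and in $\C$ for $j\ge 1$). Since multiplication is commutative in both $S^1$ and $\C$, applying the two actions in either order multiplies $w_j$ by $t^{x_j}\xi^{m_j}=\xi^{m_j}t^{x_j}$, so the actions commute.

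For (2) I would guess the invariant coordinates directly. Set
\[
z_0 = w_0^{\,m}, \qquad z_i = w_0^{-a_i} w_i \quad (1 \le i \le n),
\]
and let $\Phi : S^1 \times \C^n \to S^1 \times \C^n$ be the resulting map. The exponent $a_i = m_0^{-1}m_i \bmod m$ is chosen precisely so that each $z_i$ is $\Z_m$-invariant: under $\xi$ the coordinate $z_i$ picks up the factor $\xi^{-a_i m_0 + m_i}$, which is trivial because $a_i m_0 \equiv m_i \pmod{m}$, while $z_0 = w_0^m$ is invariant since $\xi^m = 1$. Thus $\Phi$ descends to a map $\bar\Phi$ on the quotient $S^1\times_{\Z_m}\C^n$. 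A short computation shows the $S^1$-action transforms $z_0$ with weight $x_0 m$ and $z_i$ with weight $-x_0 a_i + x_i$, which is the asserted formula. To see $\bar\Phi$ is a diffeomorphism, I would argue that $\Phi$ is a local diffeomorphism — its differential is block lower-triangular in the variables $(\theta, w_1,\dots,w_n)$ with $w_0 = e^{i\theta}$, the nonzero diagonal blocks coming from $w_0 \mapsto w_0^m$ and $w_i \mapsto w_0^{-a_i}w_i$ — whose fibers are exactly the $\Z_m$-orbits. Here $\gcd(m_0,m)=1$ is used twice: to see that the $\Z_m$-action is free (so the quotient is smooth), and to recover a point from $z_0=w_0^m$ and the ratios $z_i$ up to the $\Z_m$-action. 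Hence $\Phi$ is an $m$-fold covering with deck group $\Z_m$, so $\bar\Phi$ is the desired equivariant diffeomorphism.

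For (3) I would deduce the statement from (2) by a reparametrization of the residual circle. Specializing $m_i = x_i$ gives $a_i \equiv x_0^{-1}x_i \pmod{m}$, and one checks that every weight produced by (2), namely $x_0 m$ and $-x_0 a_i + x_i$, is divisible by $m$; writing these as $m\cdot x_0$ and $m\cdot c_i$ with $c_i = (x_i - x_0 a_i)/m \in \Z$ records the fact that $\Z_m \subset S^1$ now acts trivially on the quotient. I would then identify the residual group $S^1/\Z_m$ with $S^1$ through the $m$-th power isomorphism, under which the weights are divided by $m$ and become $(x_0, c_1, \dots, c_n)$. Reducing modulo $x_0$ gives $m c_i \equiv x_i \pmod{x_0}$, hence $c_i \equiv m^{-1} x_i \equiv s_i \pmod{x_0}$; finally, the weight-shifting diffeomorphism $z_i \mapsto z_0^{k_i} z_i$, which changes the $i$-th weight by multiples of $x_0$, brings each $c_i$ to the representative $s_i = m^{-1}x_i \bmod x_0$, completing the identification.

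The step I expect to be the main obstacle is verifying cleanly that $\bar\Phi$ in part (2) is a genuine diffeomorphism rather than merely a continuous bijection: one must confirm that the $\Z_m$-action is free so that the quotient carries a smooth structure, and that $\Phi$ realizes this quotient as a covering map, so that the induced map inherits a smooth inverse. The bookkeeping in part (3) — tracking the $m$-th power reparametrization and the subsequent reduction of weights modulo $x_0$ — is routine but must be done carefully to land on the exact representatives $s_i$.
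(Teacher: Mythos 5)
Your proposal is correct and follows essentially the same route as the paper: you use the identical map $\Phi(w_0,\dots,w_n)=(w_0^m,\,w_0^{-a_1}w_1,\dots,w_0^{-a_n}w_n)$ with $a_i\equiv m_0^{-1}m_i \pmod m$, verify $\Z_m$-invariance and $S^1$-equivariance with the same weight computation, and obtain (3) by observing that all weights become divisible by $m$ and passing to the residual circle. The only difference is cosmetic: you certify that the induced map is a diffeomorphism via a covering-space/deck-group argument, whereas the paper checks injectivity by an explicit computation; both are valid.
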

\begin{proof}
The first claim (1) is straightforward by direct computation.
For (2), 
since $m_0$ is coprime to $m$, for each $i\ge 1$, there exist integers $a_i$ and $s_i$ such that 
\[
m_0 a_i + m s_i = m_i.
\]
Then we can easily see that $a_i = m_0^{-1}m_i$ modulo $m$.

Now, we define a map $\Phi:S^1\times_{\Z_m}\C^n\to S^1\times \C^n$ as
\[
\phi([w_0,\dots,w_n]) = (w_0^m, w_0^{-a_1}w_1, \dots, w_0^{-a_n}w_n).
\]
Then $\Phi$ is well-defined since
\begin{align*}
\Phi([\xi^{m_0}w_0, \xi^{m_1}w_1,\dots, \xi^{m_n}w_n]) &= (\xi^{m_0 m}w_0^m, \xi^{-m_0a_1+m_1}w_0^{-a_1}w_1,\dots,\xi^{-m_0a_n+m_n}w_0^{a_n}w_n)\\
&=(w_0^m, \xi^{m s_1}w_0^{-a_1}w_1,\dots, \xi^{m s_n} w_0^{-a_n}w_n)\\
&=(w_0^m, w_0^{-a_1}w_1,\dots, w_0^{-a_n}w_n) = \Phi([w_0,w_1,\dots,w_n]).
\end{align*}
The surjectivity of $\Phi$ is obvious so that it is enough to show that $\Phi$ is injective.
If 
\[
\Phi([w_0,\dots,w_n]) = \Phi([w_0',\dots, w_n']), 
\]
then
\begin{itemize}
\item $w_0^m = (w_0')^m$ and
\item $w_0^{-a_i}w_i = (w_0')^{-a_i}w_i'$ for every $i=1,2,\dots,n$.
\end{itemize}
These imply that
\begin{itemize}
\item $w_0' = \xi^{k m_0} w_0$ for some $k\in\Z$ (since $\xi^{m_0}$ is also a generator of $\Z_m$), and
\item $w_0^{-a_i}w_i = \xi^{-k m_0a_i} w_0^{-a_i} w_i'$.  
\end{itemize}
Thus we have $w_i'=\xi^{km_0a_i}w_i$ for every $i=1,2,\dots,n$.
Therefore, we have
\begin{align*}
[w_0,w_1, \dots,w_n] &= [\xi^{km_0} w_0, \xi^{k m_1} w_1, \dots, \xi^{k m_n}w_n]\\
&=[\xi^{km_0}w_0, \xi^{k (m_0a_1 +ms_1)} w_1,\dots,\xi^{k(m_0a_n+ms_n)}w_n]\\
&=[\xi^{km_0}w_0,\xi^{km_0a_1}w_1,\dots,\xi^{km_0a_n}w_n] = [w_0',w_1',\dots, w_n'].
\end{align*}
To show that $\Phi$ is $S^1$-equivariant, we define an $S^1$-action on $S^1\times\C^n$ as
\[
t\cdot(z_0, z_1,\dots, z_n) = (t^{mx_0}z_0,t^{-x_0a_1+x_1} z_1,\dots, t^{-x_0a_n+x_n} z_n).
\]
Then the $S^1$-equivariance of $\Phi$ is as following.
\begin{align*}
\Phi(t\cdot[w_0,w_1,\dots,w_n]) &= \Phi([t^{x_0}w_0,t^{x_1}w_1,\dots,t^{x_n}w_n])\\
&=(t^{mx_0}w_0^m, t^{-x_0a_1+x_1}w_0^{-a_1}w_1,\dots, t^{-x_0a_n+x_n}w_0^{-a_n}w_n)\\
&=t\cdot(w_0^m, w_0^{-a_1}w_1,\dots, w_0^{-a_n}w_n)=t\cdot\Phi([w_0,w_1,\dots, w_n]).
\end{align*}
To show (3), suppose that $\Z_m$ acts on $S^1 \times \C^n$ as a subgroup of $S^1$, i.e., $m_i=x_i$ for every $i=0,1,\cdots, n$. 
By definition of $a_i$ and $s_i$, we have
\[
-x_0a_i+x_i = -m_0a_0 + m_i = m s_i.
\]
Then $s_i = m^{-1}x_i$ modulo $x_0$ since $x_0$ is coprime to $m$. Thus for every $i=1,2,\cdots,n$, the number $-x_0a_i+x_i$ is a multiple of $m$. Hence the $S^1$-action given as above is non-effective and it has a weight-vector $(mx_0, ms_1, \cdots, ms_n)$. 
Therefore, after taking a quotient by $\Z_m$ which acts trivially on $S^1\times \C^n$, the residual $S^1 / \Z_m$-action is given as in (3). 
\end{proof}

Now, let us consider a $(2n+1)$-dimensional $S^1$-manifold $(M,\psi)$. Then for each $x\in \mathring{M}$, Theorem \ref{theorem-slice_theorem} implies that $V_x\cong\R^{2n}$ and the orbit $S^1\cdot x$ has an $S^1$-equivariant tubular neighborhood diffeomorphic to $S^1 \times_H \R^{2n}$ where $H$ 
is the isotropy subgroup of $x$. 
The following proposition states that $S^1 \times_H \R^{2n}$ is in fact $S^1$-equivariantly diffeomorphic to the product space $S^1 \times \C^n$ with a certain linear $S^1$-action. 

\begin{proposition}[Proposition \ref{prop-localnormalform}]\label{prop-localnormalformsection2}
Let $(M,\psi)$ be a $(2n+1)$-dimensional fixed-point-free $S^1$-manifold. 
Suppose that $C\subset\mathring{M}$ is an orbit with the isotropy subgroup 
$\Z_p$, which is possibly trivial. Then there exists an $S^1$-equivariant tubular neighborhood $\mathcal{U}$ 
which is $S^1$-equivariantly diffeomorphic to $S^1 \times \C^n$ where $S^1$ acts on $S^1 \times \C^n$ by 
\[
t \cdot (z_0, z_1,z_2, \cdots, z_n) = (t^pz_0, t^{q_1}z_1, t^{q_2}z_2, \cdots, t^{q_n}z_n)
\]
for some integers $q_1, q_2, \cdots, q_n$. Moreover, the (unordered) integers {$q_j$'s} are uniquely determined 
modulo $p$.
\end{proposition}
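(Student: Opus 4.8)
The plan is to obtain the normal form by combining the slice theorem with Lemma~\ref{lemma-diffeomorphismontoproductspace}(2). First I would apply Theorem~\ref{theorem-slice_theorem} to the orbit $C = S^1\cdot x$ with isotropy subgroup $H = \Z_p$, producing an $S^1$-equivariant tubular neighborhood $\mathcal{U}\cong S^1\times_{\Z_p} V_x$, where $V_x = T_xM/T_xC$ is the slice. Since $\dim_{\R}V_x = 2n$, choosing a $\Z_p$-invariant inner product and a compatible complex structure identifies $V_x\cong\C^n$ so that the generator $\xi = e^{2\pi i/p}$ of $\Z_p$ acts by $\xi\cdot(v_1,\dots,v_n) = (\xi^{m_1}v_1,\dots,\xi^{m_n}v_n)$ for some integers $m_1,\dots,m_n$.

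Next I would realize $\mathcal{U}\cong (S^1\times\C^n)/\Z_p$ as an instance of Lemma~\ref{lemma-diffeomorphismontoproductspace}: on $S^1\times\C^n$ let $S^1$ act by left translation on the first factor and trivially on $\C^n$ (weights $(x_0,\dots,x_n) = (1,0,\dots,0)$), and let $\Z_p$ act as the subgroup of $S^1$ on the first factor together with the slice action (weights $(m_0,m_1,\dots,m_n)$ with $m_0=\pm1$). Both coprimality hypotheses $\gcd(x_0,p)=1$ and $\gcd(p,m_0)=1$ hold automatically, so Lemma~\ref{lemma-diffeomorphismontoproductspace}(2) gives an $S^1$-equivariant diffeomorphism of $\mathcal{U}$ onto $S^1\times\C^n$ with the linear action of weight-vector $(p;q_1,\dots,q_n)$, where $q_j\equiv -x_0a_j+x_j\equiv \pm m_j\pmod p$ and $a_j=m_0^{-1}m_j$. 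This proves existence, the free case $p=1$ being the trivial slice $m_1=\cdots=m_n=0$.

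For uniqueness I would read the weights back off the normal form: the zero orbit $\{z_1=\cdots=z_n=0\}$ is exactly $C$, its points have isotropy $\Z_p$, and the slice representation at such a point is $\xi\cdot(z_1,\dots,z_n)=(\xi^{q_1}z_1,\dots,\xi^{q_n}z_n)$. Thus the multiset $\{q_j\bmod p\}$ is precisely the set of weights of the slice $\Z_p$-representation $V_x$, which by the uniqueness clause of the slice theorem is an invariant of the $S^1$-equivariant germ of the action along $C$; hence so are the $q_j$ modulo $p$, up to reordering.

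The step I expect to be the genuine obstacle is pinning down the weights rather than merely their absolute values, because integer weights are extracted from a real representation: a rotation block by angle $2\pi m_j/p$ admits two invariant complex structures giving weight $m_j$ or $-m_j$, and correspondingly the equivariant diffeomorphism $z_j\mapsto\bar z_j$ intertwines the actions of weights $q_j$ and $-q_j$. So $\{q_j\bmod p\}$ is canonical only after the complex structure on $V_x$ is fixed; the underlying real $\Z_p$-module determines it only up to simultaneous sign flips, and I would use the orientation of $M$ to constrain the number of such flips to be even. This is exactly the ambiguity that disappears in the quantity $q_1\cdots q_n\pmod p$ entering $e(M,\psi)$, so it is harmless for the applications even though it must be acknowledged here.
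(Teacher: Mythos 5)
Your argument follows the paper's proof essentially verbatim: the slice theorem (Theorem~\ref{theorem-slice_theorem}) gives $\mathcal{U}\cong S^1\times_{\Z_p}V_x$, the slice is complexified into a sum of rotation representations, and Lemma~\ref{lemma-diffeomorphismontoproductspace}(2) untwists the associated bundle into the product model with the stated linear action, so the proposal is correct and takes the same route. Your closing remark on the sign ambiguity of the weights is a genuine subtlety that the paper dispatches only by citing Koll\'ar: your resolution --- that the multiset $\{q_j \bmod p\}$ is canonical only up to an even number of sign flips once orientations are fixed, which leaves $q_1\cdots q_n \bmod p$ and hence $e(M,\psi)$ unaffected --- is exactly the right reading of the (slightly overstated) uniqueness clause.
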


\begin{proof}
Let $x \in M$ be a point in $M$ with the isotropy subgroup $\Z_p \subset S^1$, and let $V_x\cong\R^{2n}$ be the slice at $x$.
Recall that any orientation preserving irreducible real representation of $\Z_p$ is two-dimensional, and it is isomorphic to a one-dimensional complex representation of $\Z_p$ determined by a rotation number modulo $p$. 
Thus $V_x \cong \C^n$ and a $\Z_p$-action on $S^1\times V_x$ is given by 
\[
\xi \cdot (w_0, w_1,\cdots, w_n) = (\xi w_0, \xi^{-q_1}w_1,\cdots, \xi^{-q_n}w_n)
\]
for every $(w_0,w_1,\dots,w_n)\in S^1\times V_x$
where $\xi = e^{\frac{2 \pi i}{p}}$ and $q_i$'s are integers uniquely determined modulo $p$, see \cite[p.647]{Ko}  for more details. 

Let $C$ be an orbit containing $x$. 
By the slice theorem \ref{theorem-slice_theorem}, there exists an $S^1$-equivariant tubular neighborhood $\U$ of $C$ which can be identified with $S^1\times_{\Z_p} V_x$ where the $S^1$-action on $S^1\times_{\Z_p} V_x$ is induced from the $S^1$-action on $S^1\times V_x$ given by
\[
t\cdot (w_0,w_1,\dots,w_n) = (tw_0, w_1,\dots,w_n)
\]
for every $t\in S^1$ and $(w_0,w_1,\dots, w_n) \in S^1 \times V_x$. 

Now we apply Lemma~\ref{lemma-diffeomorphismontoproductspace} with $m=p$, $x_0=m_0=1$ and $x_i=0$, $m_i=-q_i$ for $i\ge1$. Then we may choose $a_i=-q_i$ and $s_i=0$ for $i\ge 1$ so that
\[
m_0a_i+ms_i = 1\cdot (-q_i) + m\cdot 0 = -q_i = m_i.
\]
Therefore, we obtain an $S^1$-equivalent diffeomorphism
\[
\Phi:S^1\times_{\Z_p} V_x\to S^1\times \C^n,
\]
where $S^1$-action on the target is given by
\begin{align*}
t\cdot(z_0,z_1,\dots,z_n) &= (t^{mx_0}z_0,t^{-x_0a_1+x_1}z_1,\dots, t^{-x_0a_n+x_n}z_n)\\
&=(t^p z_0, t^{q_1} z_1,\dots,t^{q_n}z_n).
\end{align*}
This completes the proof.
\end{proof}

By Proposition \ref{prop-localnormalformsection2}, each exceptional orbit $C$ assigns a vector 
\[
\vec{q}(C) = (q_1(C), q_2(C), \dots, q_n(C)) \in (\Z_{p(C)})^n
\]
which is uniquely determined up to ordering of $q_i(C)$'s where $p(C)$ is an order of the isotropy subgroup of $C$. 
We call $\vec{q}(C)$ a {\em weight-vector of $C$}. 

Now, assume that $(M,\psi)$ is a $(2n+1)$-dimensional closed pseudo-free $S^1$-manifold 
and let $\mathcal{E}$ be the set of exceptional orbits. 
Then each $C \in \mathcal{E}$ is isolated so that $\gcd(p(C),q_i(C))=1$ for every $i=1,2,\cdots, n$, i.e.,
\[
\vec{q}(C) \in (\Z_{p(C)}^{\times})^n.
\]

\begin{definition}\label{def_local_invariant}
Let $(M,\psi)$ be a $(2n+1)$-dimensional pseudo-free $S^1$-manifold with the set $\mathcal{E}$ of exceptional orbits. 
\begin{enumerate}
\item A {\em local data} $\mathcal{L}(M,\psi)$ is defined by 
\[
\mathcal{L}(M,\psi) = \left\{ (C, (p(C) ;  \vec{q}(C))) ~\left| 
~p(C)\in\N, \vec{q}(C)\in \left(\Z_{p(C)}^{\times}\right)^n \right.\right\}_{C \in \mathcal{E}}.
\]
\item We call $(p(C) ; \vec{q}(C))$ the {\em local invariant} of $C$, and we say that $C$ is of {\em$(p(C);\vec{q}(C))$-type}.
\end{enumerate}
\end{definition}

\section{Chern numbers of fixed-point-free circle actions}

In this section, we give a brief review of the definition of the first Chern class of fixed-point-free $S^1$-manifolds. Also we give an explicit computation of the Chern number 
of an odd-dimensional sphere equipped with a linear action and explain how the Chern number (modulo $\Z$) can be computed in terms of a local data. 

We first review the classical result about a principal bundle over a smooth manifold.

\begin{definition}
Let $G$ be a compact Lie group and $\mathfrak{g}$ be the Lie algebra of $G$. 
Let $M$ be a principal $G$-bundle. A {\em connection form} $\alpha$ on $M$ is a smooth $\mathfrak{g}$-valued 1-form such that 
\begin{itemize}
\item $\alpha(\underline{X}) = X$ for every $X \in \mathfrak{g}$, and
\item $\alpha$ is $G$-invariant
\end{itemize}
where $\underline{X}$ is a vector field on $M$, called the {\em fundamental vector field} of $X$, defined by 
\[
\underline{X}_x := \left. \frac{d}{dt}\right|_{t=0} (\exp (tX) \cdot x)
\]
for every $x \in M$. 
\end{definition}

For a given connection form $\alpha$ on $M$, the {\em curvature form} $\Omega_{\alpha}$ associated to $\alpha$ is a $\mathfrak{g}$-valued 2-form on $M$ defined by 
\[
\Omega_{\alpha} = d\alpha + [\alpha,\alpha].
\]
In particular, if $G$ is abelian, then the Lie bracket $[\cdot, \cdot]$ vanishes so that we have $\Omega_{\alpha} = d\alpha$. 

Suppose that $G = S^1 \in \C$ be the unit circle group with the Lie algebra $\mathfrak{s}^1$.
Also, let $(M,\psi)$ be a fixed-point-free $S^1$-manifold with a connection form $\alpha$. 
Then $\alpha$ can be viewed as an $\R$-valued 1-form via a linear identification map 
$\varepsilon : \mathfrak{s}^1 \rightarrow \R$. Note that $\varepsilon$ is determined by the image $\varepsilon(X) \in \R$
where $X$ is the generator of the kernel of the exponential map $\exp \colon \mathfrak{s}^1 \rightarrow S^1$.
We say that $\alpha$ is normalized if an identification map $\varepsilon$ is chosen to be 
\[
\varepsilon(X) = 1. 
\]
Equivalently, $\alpha$ is normalized if $S^1 = \mathfrak{s}^1 / \ker (\exp) \cong \R / \Z$ and $\alpha(\underline{X}) = 1$ where $X = \frac{\partial}{\partial \theta}$ and $\theta$ is a parameter of $\R$.
In particular, if $\alpha$ is normalized, then we have 
\[
	\int_{F} \alpha = 1
\]
for any free orbit $F$ (see also Remark \ref{remark_definition_curvature}). 

The following proposition is well-known and the proof is given in \cite{Au}. But we give the complete proof here to show that it can be extended to the case of a fixed-point-free action. 

\begin{proposition}\cite{Au}\label{proposition_curvature_Chern_class}
Let $M$ be a principal $S^1$-bundle over a smooth manifold $B$ and let $\alpha \in \Omega^1(M)$ be a normalized connection 1-form on $M$. 
Then, 
\begin{itemize}
\item there exists a unique closed $2$-form $\Theta_{\alpha}$ on $B$ such that $q^* \Theta_{\alpha} = d\alpha$ where $q : M \rightarrow B$ is the quotient map, 
\item $[\Theta_{\alpha}] \in H^2(B; \R)$ is independent of the choice of $\alpha$, and 
\item $[\Theta_{\alpha}]$ is equal to the first Chern class of the associated complex line bundle $M \times_{S^1} \C$ over $B$ where $S^1$ acts on $M \times \C$ by 
\[
t\cdot (x,z) = (t\cdot x, tz)
\]
for every $t\in S^1$ and $(x,z) \in M \times \C$. 
\end{itemize}
\end{proposition}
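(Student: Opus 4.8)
The plan is to verify the three bullet points in order, using throughout the fact that since the action is free, $q \colon M \to B$ is a principal $S^1$-bundle, hence a surjective submersion, so that $q^*$ is injective on differential forms. This injectivity is what makes "descends uniquely" precise and will be invoked repeatedly.

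First I would show that $d\alpha$ descends. Because $S^1$ is abelian the curvature is $\Omega_\alpha = d\alpha$, and the key claim is that $d\alpha$ is \emph{basic}, i.e.\ both $S^1$-invariant and horizontal. Invariance is immediate since $\alpha$ is $S^1$-invariant and $d$ commutes with the pullback by the group action. For horizontality I apply Cartan's formula to the fundamental vector field $\underline{X}$: $L_{\underline{X}}\alpha = \iota_{\underline{X}} d\alpha + d(\iota_{\underline{X}}\alpha)$. The left-hand side vanishes by invariance, and $\iota_{\underline{X}}\alpha = \alpha(\underline{X}) = 1$ is constant, so $d(\iota_{\underline{X}}\alpha) = 0$; hence $\iota_{\underline{X}} d\alpha = 0$. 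A basic form descends uniquely to a form $\Theta_\alpha$ on $B$ with $q^*\Theta_\alpha = d\alpha$, and since $q^*(d\Theta_\alpha) = d(d\alpha) = 0$, the injectivity of $q^*$ forces $d\Theta_\alpha = 0$.

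Next, for independence of the choice of connection, let $\alpha'$ be another normalized connection form. Both satisfy $\alpha(\underline{X}) = \alpha'(\underline{X}) = 1$, so the difference $\alpha - \alpha'$ is horizontal, and it is visibly $S^1$-invariant; thus it is basic and descends to a $1$-form $\beta$ on $B$ with $q^*\beta = \alpha - \alpha'$. Then $q^*(\Theta_\alpha - \Theta_{\alpha'}) = d(\alpha - \alpha') = q^*(d\beta)$, and injectivity of $q^*$ gives $\Theta_\alpha - \Theta_{\alpha'} = d\beta$, so the two classes coincide in $H^2(B;\R)$.

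Finally, for the identification with $c_1(E)$, I would construct a connection on the associated line bundle $E = M \times_{S^1} \C$ directly from $\alpha$ and compute its curvature. Identifying sections of $E$ with $S^1$-equivariant functions $M \to \C$, the normalized form $\alpha$ determines a covariant derivative whose curvature is a basic $2$-form descending precisely to $\Theta_\alpha$; the Chern--Weil theorem then identifies the de Rham class of this curvature with $c_1(E)$. The hard part will be the bookkeeping of normalization constants: the usual Chern--Weil formula carries a factor of $\tfrac{i}{2\pi}$, and I must check that this factor is exactly absorbed by the chosen normalization $\varepsilon(X) = 1$ (equivalently $S^1 \cong \R/\Z$), so that $[\Theta_\alpha] = c_1(E)$ with no spurious constant. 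The condition $\int_F \alpha = 1$ on a free orbit $F$ is what guarantees the resulting class is the \emph{integral} first Chern class rather than an arbitrary real multiple of it; as a consistency check one can restrict $E$ to an embedded surface in $B$ and match $\int \Theta_\alpha$ against the degree of the restricted bundle.
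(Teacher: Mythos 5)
Your treatment of the first two bullets coincides with the paper's: Cartan's formula applied to $\alpha$ and to $d\alpha$ shows $d\alpha$ is basic, hence descends uniquely to a closed $\Theta_\alpha$; and the difference of two normalized connection forms is basic, descends to a $1$-form $\gamma$ on $B$, and $d\gamma = \Theta_\alpha - \Theta_{\alpha'}$ settles independence of the class. (Your explicit appeal to injectivity of $q^*$ for forms is a useful precision the paper leaves implicit in ``it is straightforward that such a $\Theta_\alpha$ is unique.'')

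For the third bullet your route genuinely differs. The paper argues by naturality: it takes the classifying map $f \colon B \to BS^1$ with equivariant lift $\tilde f \colon M \to ES^1$, notes that $\tilde f^*\alpha_0$ is again a normalized connection so $f^*\Theta_{\alpha_0} = \Theta_{\tilde f^*\alpha_0}$, and thereby reduces everything to the single universal computation $[\Theta_{\alpha_0}] = c_1(\mathcal{O}(1))$ on $BS^1$, which it verifies by the explicit integral $\int_{S^3}\alpha\wedge d\alpha = 1$ over the Hopf fibration (Proposition \ref{prop_Chern_number_sphere} and Corollary \ref{corollary_Chern_class_of_sphere}). You instead propose to run Chern--Weil directly on the associated line bundle $E = M\times_{S^1}\C$ for each $M$, identifying sections with equivariant functions and checking that the normalization $\varepsilon(X)=1$ absorbs the $\tfrac{i}{2\pi}$. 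Both are valid; the paper's approach buys a once-and-for-all reduction in which the only analytic input is one explicit integral that it needs anyway elsewhere (the same sphere computation drives Proposition \ref{proposition-exactlyoneexceptionalorbittype}), while yours avoids classifying spaces entirely at the cost of carrying the normalization bookkeeping through the covariant-derivative construction --- a step you flag but do not execute. Your proposed consistency check (pairing $\Theta_\alpha$ with an embedded surface and comparing with the degree of $E$ restricted there) is in substance the same verification the paper performs on $\C P^1 \subset BS^1$, so to make your argument complete you would still need to carry out that one integral, or an equivalent local trivialization computation, explicitly.
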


\begin{proof}
Recall the Cartan's formula which is given by 
\[
\mathcal{L}_{\underline{X}} = i_{\underline{X}}\circ d + d \circ i_{\underline{X}}.
\]
By applying the Cartan's formula to $\alpha$, we have 
\[
\mathcal{L}_{\underline{X}}\alpha  = i_{\underline{X}}\circ d\alpha  + d \circ i_{\underline{X}} \alpha = 0.
\]
Since $i_{\underline{X}}\alpha \equiv 1$, we have $i_{\underline{X}}d\alpha = 0$, i.e. $d\alpha$ is horizontal. 
Also, by applying the Cartan's formula to $d\alpha$, we have 
\[
\mathcal{L}_{\underline{X}} d\alpha = i_{\underline{X}}d^2\alpha + d i_{\underline{X}} d\alpha = 0.
\]
Therefore, there exists a push-forward of $d\alpha$, namely $\Theta_{\alpha}$, on $B$ such that $q^* \Theta_{\alpha} = d\alpha$. It is straightforward that such a
 $\Theta_{\alpha}$ is unique. 

To prove the second statement, let $\beta$ be another connection form on $M$. Then it is obvious that $\alpha - \beta$ is $S^1$-invariant and $i_{\underline{X}} (\alpha - \beta) = 0.$ Thus there exists an 1-form $\gamma$ on $B$ such that $q^* \gamma = \alpha - \beta$.
In other words, $d\gamma = \Theta_{\alpha} - \Theta_{\beta}$ so that $[\Theta_{\alpha}] = [\Theta_{\beta}]$ 
in $H^2(B; \R)$. 
	
To prove the third statement, recall that for a given smooth manifold $N$, 
there is a one-to-one correspondence between the set of principal $S^1$-bundles over $N$ and the set of homotopy classes of maps $[N, BS^1]$ where $ES^1$ is a contractible space on which $S^1$ acts freely, and $BS^1 = ES^1 / S^1$ is the classifying space of $S^1$. 
By applying this argument to $M$, we have a map $f : B \rightarrow BS^1$ and an $S^1$-equivariant map $\widetilde{f} : M \rightarrow ES^1$ such that
\[
\xymatrix{
M \ar[r]^{\widetilde{f}} \ar[d]_{q} & ES^1 \ar[d]^{\widetilde{q}}\\
B \ar[r]^f &  BS^1
}
\]
commutes. 
	    
Now, let $\alpha_0$ be a normalized connection form on $ES^1$.  Since $\widetilde{f}$ is $S^1$-equivariant, the pull-back 
$\widetilde{f}^*\alpha_0$ is also a normalized connection form on $M$ so that we have 
$f^*\Theta_{\alpha_0} = \Theta_{\tilde f^*\alpha_0}$. 
Furthermore, the above diagram induces a bundle morphism
\[
\xymatrix{
M \times_{S^1} \C  \ar[r]^{\widetilde{f_{\C}}} \ar[d]_{q_\C} & ES^1 \times_{S^1} \C \ar[d]^{\widetilde{q}_{\C}}\\
B \ar[r]^f &  BS^1
}
\]
for any fixed linear $S^1$-action on $\C$ where $q_\C$ ($\widetilde{q}_\C$, respectively) is an extension of $q$ ($\widetilde{q}$, respectively). 
Therefore, by the naturality of characteristic classes, it is enough to show that $[\Theta_{\alpha_0}]$ is equal to the first Chern class of the complex line bundle 
\[
\mathcal{O}(1) := ES^1 \times_{S^1} \C \rightarrow BS^1
\]
where $S^1$ acts on $ES^1 \times \C$ by 
\[
t\cdot (x,z) = (t\cdot x, t z)
\]
for every $t\in S^1$ and $(x,z) \in ES^1 \times \C$. 
Then it follows from Corollary \ref{corollary_Chern_class_of_sphere}.
\end{proof}

Let us consider a fixed-point-free $S^1$-manifold $M$. Even though the action is not free, we can find a connection form as follows.

\begin{proposition}\label{prop_existence_connection_form}
Let $(M,\psi)$ be a closed fixed-point-free $S^1$-manifold. 
Then there exist an $\mathfrak{s}^1$-valued 1-form $\alpha$, called a {\em connection form} on $M$, such that 
\begin{itemize}
\item $\alpha(\underline{X}) = X$ for every $X \in \mathfrak{s}^1$, and
\item $\alpha$ is $S^1$-invariant.
\end{itemize}
\end{proposition}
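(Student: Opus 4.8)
The plan is to build $\alpha$ by orthogonally projecting onto the orbit directions with respect to an invariant metric, exactly as one does for a free action; the single place where the hypothesis enters is in guaranteeing that the orbit directions are genuinely one-dimensional at every point.

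First I would fix a generator $X$ of $\mathfrak{s}^1$ and average an arbitrary Riemannian metric over the compact group $S^1$ to obtain an $S^1$-invariant Riemannian metric $g$ on $M$ (this average converges since $S^1$ is compact). Next I would observe that the fundamental vector field $\underline{X}$ is nowhere vanishing: if $\underline{X}_x = 0$, then the curve $t \mapsto \exp(tX)\cdot x$ has zero velocity at $t=0$, hence is stationary, so $x$ is fixed by all of $S^1$, contradicting the fixed-point-free hypothesis. Consequently $g(\underline{X}, \underline{X})$ is a strictly positive smooth function on $M$, and in particular is invertible.

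Then I would define the $\mathfrak{s}^1$-valued $1$-form
\[
\alpha := \frac{g(\,\cdot\,, \underline{X})}{g(\underline{X}, \underline{X})}\, X.
\]
The normalization $\alpha(\underline{X}) = X$ is immediate by substituting $\underline{X}$ and cancelling. For $S^1$-invariance I would use two facts: that $g$ is invariant by construction, and that $\underline{X}$ is invariant under the action, i.e.\ $(\psi_t)_*\underline{X} = \underline{X}$ for every $t \in S^1$, which holds because $S^1$ is abelian and hence its adjoint representation on $\mathfrak{s}^1$ is trivial. Substituting $(\psi_t)_* Y$ and $(\psi_t)_*\underline{X}$ into $\alpha_{\psi_t(x)}$ and invoking the invariance of $g$ makes the scalar factor collapse to $\alpha_x(Y)$, giving $\psi_t^*\alpha = \alpha$.

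I do not expect a serious obstacle here. The entire content is the elementary remark that fixed-point-freeness forces $\underline{X}$ to be nowhere zero, which is precisely what keeps the denominator $g(\underline{X}, \underline{X})$ away from zero and thus makes the construction smooth and globally defined. It is worth emphasizing in the write-up that freeness of the action is never used, only the absence of fixed points, so that this construction supplies a genuine connection form in the broader setting needed for the Chern--Weil computation of $\langle c_1(E)^n, [B]\rangle$.
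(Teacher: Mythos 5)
Your proof is correct, but it takes a different route from the paper's. The paper reduces to the free case: letting $\ell$ be the least common multiple of the orders of the isotropy subgroups, it observes that $M/\Z_\ell$ is a genuine principal $S^1/\Z_\ell$-bundle over $B=M/S^1$ (a paracompact orbifold), invokes the classical existence of connection forms on principal bundles over paracompact bases, and takes $\alpha = \tfrac{1}{\ell}\pi_\ell^*\alpha'$ for the resulting form $\alpha'$. You instead build $\alpha$ directly by averaging a Riemannian metric to get an invariant metric $g$, noting that fixed-point-freeness makes $\underline{X}$ nowhere vanishing (if $\underline{X}_x=0$ then the integral curve $t\mapsto \exp(tX)\cdot x$ is constant by uniqueness, so $x$ is fixed by all of $S^1$), and setting $\alpha = \frac{g(\cdot,\underline{X})}{g(\underline{X},\underline{X})}X$; since $\mathfrak{s}^1$ is one-dimensional, checking $\alpha(\underline{X})=X$ suffices for the first property, and invariance of $g$ and of $\underline{X}$ (the action being abelian) gives the second. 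Your construction is more elementary and self-contained, avoiding the quotient orbifold and the citation to Kobayashi--Nomizu; the paper's approach has the advantage of making the relation to the free quotient $M/\Z_\ell$ and the normalization by $\ell$ explicit, which is in the spirit of Weinstein's theorem quoted later. Both are complete proofs of the stated proposition.
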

\begin{proof}
Let $\ell$ be a least common multiple of the orders of the isotropy subgroups of the elements in $M$ and let $\Z_{\ell}$ be the cyclic subgroup of $S^1$ of order $\ell$. Then we have a quotient map $\pi_{\ell} : M \rightarrow M / \Z_{\ell} $ and the quotient space $M / \Z_{\ell}$ becomes an orbifold. 
Note that $S^1 / \Z_{\ell}$ acts on the quotient space $M / \Z_{\ell}$ freely so that $M / \Z_{\ell}$ is a principal 
$S^1 / \Z_{\ell}$-bundle over $B=M / S^1$. 
The slice theorem \ref{theorem-slice_theorem} implies that the quotient space $B$ is an orbifold, in particular, $B$ is paracompact, see \cite{Sa} for the detail.
Since any principal $S^1$-bundle over a paracompact space admits a connection form(c.f. \cite[Chap II]{KN}), there exists a connection form $\alpha'$ on $M / \Z_{\ell}$.
Then it is not hard to check that 
\[
\alpha = \frac{1}{\ell}\pi_{\ell}^*\alpha'
\]
is our desired 1-form. 
\end{proof}

\begin{lemma}\label{lemma_not_depending_on_the_choice_of_connection}
Let $\alpha$ be a normalized connection form on $M$.
There exists a unique closed 2-form $\Theta_{\alpha}$ on $M / S^1$ such that $q^* \Theta_{\alpha} = d\alpha$ where $q : M \rightarrow M / S^1$ is the quotient map.
Moreover, $[\Theta_{\alpha}] \in H^2(M / S^1 ;\R)$ does not depend on the choice of $\alpha$. 
\end{lemma}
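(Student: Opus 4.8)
The plan is to reproduce the first two bullet points of Proposition \ref{proposition_curvature_Chern_class}, with the extra care forced by the fact that $B = M/S^1$ is now only an orbifold. The whole argument rests on one claim: a form that is \emph{basic} on $M$ — meaning $S^1$-invariant and horizontal ($i_{\underline{X}}(\cdot) = 0$) — descends to a genuine differential form on the orbifold $B$, and the pullback $q^*$ is injective on orbifold forms. Granting this descent, the proof of Proposition \ref{proposition_curvature_Chern_class} transfers almost verbatim.

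First I would show that $d\alpha$ is basic. Since $\alpha$ is normalized, $i_{\underline{X}}\alpha \equiv 1$ is constant, so Cartan's formula gives $i_{\underline{X}}d\alpha = \mathcal{L}_{\underline{X}}\alpha - d(i_{\underline{X}}\alpha) = 0 - 0 = 0$, using the $S^1$-invariance of $\alpha$; hence $d\alpha$ is horizontal. Applying Cartan's formula once more, $\mathcal{L}_{\underline{X}}d\alpha = i_{\underline{X}}d^2\alpha + d\,i_{\underline{X}}d\alpha = 0$, so $d\alpha$ is also $S^1$-invariant. Thus $d\alpha$ is basic and descends to a 2-form $\Theta_{\alpha}$ on $B$ with $q^*\Theta_{\alpha} = d\alpha$. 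Uniqueness is immediate from the injectivity of $q^*$, and closedness follows because $q^*(d\Theta_{\alpha}) = d(d\alpha) = 0$ forces $d\Theta_{\alpha} = 0$.

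The descent itself is the step I expect to be the main obstacle, and I would handle it through the local normal form. By Proposition \ref{prop-localnormalformsection2}, a neighborhood of any exceptional orbit is $S^1$-equivariantly modeled on $S^1 \times \C^n$ with a linear action, so the corresponding orbifold chart of $B$ is $\C^n/\Z_p$ with uniformizing cover $\C^n$; away from the exceptional orbits $q$ is an honest principal $S^1$-bundle and the descent is the classical one used in Proposition \ref{proposition_curvature_Chern_class}. In each uniformizing chart a basic form pulls back to a $\Z_p$-invariant form on $\C^n$, which is by definition an orbifold form, and these local descents agree on overlaps because they all recover $d\alpha$ upstairs; this defines $\Theta_{\alpha}$ globally as an orbifold 2-form. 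Alternatively, one may pass to the free quotient $M/\Z_{\ell} \to B$ of Proposition \ref{prop_existence_connection_form}, an orbifold principal $S^1/\Z_{\ell}$-bundle, and run the descent there. The only genuinely new point relative to the smooth case is the bookkeeping of $\Z_p$-invariance in the charts.

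Finally, for the independence statement, let $\beta$ be a second normalized connection form. Both satisfy $i_{\underline{X}}(\cdot) \equiv 1$ and are $S^1$-invariant, so $\alpha - \beta$ is basic and descends to a 1-form $\gamma$ on $B$ with $q^*\gamma = \alpha - \beta$. Then $q^*(d\gamma) = d\alpha - d\beta = q^*(\Theta_{\alpha} - \Theta_{\beta})$, and injectivity of $q^*$ yields $d\gamma = \Theta_{\alpha} - \Theta_{\beta}$. Hence $[\Theta_{\alpha}] = [\Theta_{\beta}]$ in the (orbifold) de Rham cohomology $H^2(B;\R)$, which completes the proof.
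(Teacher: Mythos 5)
Your proposal is correct and follows essentially the same route as the paper: the paper's proof of this lemma is literally the statement that the argument of Proposition \ref{proposition_curvature_Chern_class} (Cartan's formula to show $d\alpha$ is basic, descent to $\Theta_{\alpha}$, and the $\alpha-\beta$ comparison) carries over verbatim. The only difference is that you spell out the orbifold descent step via the local normal form, which the paper leaves implicit.
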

\begin{proof}
The proof is exactly same as in the proof of Proposition \ref{proposition_curvature_Chern_class}.
\end{proof}

Now, we define the first Chern class of a fixed-point-free $S^1$-manifold as follows.
\begin{definition}
Let $(M, \psi)$ be a closed fixed-point-free $S^1$-manifold.
Let $\alpha$ be a normalized connection form on $M$.
Then we call $[\Theta_{\alpha}] \in H^2(M / S^1;\R)$ {\em the first Chern class (or the Euler class) of $(M,\psi)$} and we denote by $c_1(M,\psi)$. 
\end{definition}

\begin{remark}\label{remark_definition_curvature}\cite[page 194]{CdS}
The reader should keep in mind that a connection form $\alpha$ is an $\mathfrak{s}^1$-valued 1-form, and we need to identify $\mathfrak{s}^1$ with $\R$ via $\varepsilon$ to regard $\alpha$ as a usual $\R$-valued differential form. For example, Audin \cite[Example V.4.4] {Au} used an identification map $\varepsilon(\frac{\partial}{\partial\theta}) = 2\pi$ and defined the Chern class by $[\frac{1}{2\pi}\Theta_{\alpha}]$. In \cite{CdS}, Cannas da Silva used the same identification map as in our paper. 
\end{remark}

Note that since $\alpha$ is normalized, we have $\int_{S^1} \alpha = 1$. Thus if $M$ is of dimension $2n+1$, then we have 
\[
\langle c_1(M,\psi)^n, [B] \rangle = \int_B \Theta_{\alpha} \wedge \Theta_{\alpha} \wedge \cdots \wedge \Theta_{\alpha} = \int_M \alpha \wedge (d\alpha)^n
\]
where $B = M / S^1$ and $[B] \in H_{2n}(B;\Z)$ is the fundamental homology class of $B$. 

\begin{remark}
The theory of characteristic classes of orbibundles is well established in the case of {\em good orbibundles}.
In fact, they are defined as elements of {\em orbifold cohomology}. 
In our case, $B$ is the quotient space of $M$ by a pseudo-free $S^1$-action and the orbifold cohomology 
$H^*_{orb}(B)$ is the same as the equivariant cohomology $H^*_{S^1}(M)$ (see \cite[Proposition 1.51]{ALR}). 
Also, the fibration 
\[
q_{S^1} : M \times_{S^1} ES^1 \rightarrow M/ S^1 = B
\]
induces an isomorphism $q_{S^1}^* : H^*(B) \rightarrow H^*_{S^1}(M)$ with coefficients in a field (see \cite[Proposition 2.12]{ALR}).
With this identification, one can see that the Chern class $c_1(M,\psi) \in H^2(B;\R)$ defined above is actually the same as the Chern class 
$c_1(\tilde E) \in H^2(M\times_{S^1}ES^1)=H^2_{orb}(B;\R)$ (defined as in \cite[page 45]{ALR}) of the associated line bundle 
\[
\tilde E = (M \times \C)\times_{S^1} ES^1 \rightarrow M\times_{S^1} ES^1
\]
In other words, we have 
\[
q_{S^1}^* (c_1(M,\psi)) = c_1(\tilde E).
\]
\end{remark}

The following proposition gives an explicit computation of the Chern numbers of odd-dimensional spheres equipped with linear $S^1$-actions, which we use crucially to prove 
Proposition \ref{proposition-exactlyoneexceptionalorbittype} and Theorem \ref{theorem-main}.

\begin{proposition}\label{prop_Chern_number_sphere}
Suppose that an $S^1$-action $\psi$ on $S^{2n-1} \subset \C^n$ is given by
\[
t \cdot (z_1, z_2, \cdots, z_n) = (t^{p_1} z_1, t^{p_2}z_2, \cdots, t^{p_n}z_n)
\]
for some $(p_1, p_2, \cdots, p_n) \in (\Z \setminus \{0\})^n$. Then 
\[
\langle c_1(S^{2n-1},\psi)^n, [S^{2n-1} / S^1] \rangle = \frac{1}{\prod_{i=1}^n p_i}.
\]
\end{proposition}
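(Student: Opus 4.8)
The goal is to compute $\int_{S^{2n-1}} \alpha \wedge (d\alpha)^n$ for a normalized connection form $\alpha$ on the linear $S^1$-sphere. The strategy is to write down an explicit normalized connection form, compute its curvature, and carry out the integration directly.

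First I would produce an explicit $\alpha$. On $\C^n$ with coordinates $z_j = r_j e^{i\theta_j}$, the standard angular $1$-form is $\sum_j r_j^2\, d\theta_j$ (a multiple of the Liouville/contact form). The fundamental vector field of the generator $X = \partial/\partial\theta$ under the weighted action is $\underline{X} = \sum_j p_j\, \partial/\partial\theta_j$, so $\left(\sum_j r_j^2\, d\theta_j\right)(\underline{X}) = \sum_j p_j r_j^2$. To normalize I would set
\[
\alpha = \frac{\sum_{j=1}^n r_j^2\, d\theta_j}{\sum_{j=1}^n p_j r_j^2},
\]
restricted to $S^{2n-1} = \{\sum_j r_j^2 = 1\}$. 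This is $S^1$-invariant (all ingredients are) and satisfies $\alpha(\underline{X}) = 1$, so it is a normalized connection form. The main technical step is then to compute $(d\alpha)^n$ and integrate $\alpha \wedge (d\alpha)^n$ over $S^{2n-1}$.

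For the integration I would pass to the moment-polytope / action-angle picture. Setting $x_j = r_j^2$, the sphere maps to the simplex $\Delta = \{x_j \geq 0,\ \sum x_j = 1\}$, and each fiber is a torus in the $\theta_j$'s. Because $\alpha$ depends on the $\theta_j$ only through the closed forms $d\theta_j$, the form $\alpha \wedge (d\alpha)^n$ fibers as a top form on $\Delta$ (in the $dx_j$) wedged against the full torus volume $d\theta_1 \wedge \cdots \wedge d\theta_n$; integrating over the torus contributes the factor $(2\pi)^n$ in the $\theta$-directions while the remaining integral over $\Delta$ reduces to an elementary rational integral. Expanding $d\alpha$ and using $\alpha \wedge (d\alpha)^n$ forces each $d\theta_j$ to appear exactly once, which is what produces the product $\prod_j p_j$ in the denominator; the combinatorial bookkeeping of the expansion is where I expect the normalization constants and the $1/\prod p_j$ to emerge.

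The hard part will be the bookkeeping in computing $(d\alpha)^n$ cleanly: $\alpha$ is a quotient, so $d\alpha$ has both a numerator-differential piece and a piece from differentiating the denominator $f = \sum_j p_j r_j^2$, and one must verify that when wedged $n$ times and multiplied by $\alpha$, the cross terms organize into a manageable integrand. Rather than brute force, I would try to recognize the computation as a known volume: the answer $\frac{1}{\prod p_j}$ is exactly the symplectic volume of the weighted projective space $\C P^{n-1}(p_1,\dots,p_n)$ with its natural Fubini–Study-type form, which is the orbit space $S^{2n-1}/S^1$. So an alternative, cleaner route is to identify $[\Theta_\alpha]$ with the Chern class of the tautological orbibundle over this weighted projective space and quote that its top self-intersection is $1/\prod p_j$. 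Either way, reconciling the explicit differential-form computation with this geometric value is the crux, and I would double-check the result against the base case $n=1$, where $S^1 \to S^1/S^1 = \text{pt}$ with weight $p_1$ gives $\int_{S^1}\alpha = 1/p_1$, matching the formula.
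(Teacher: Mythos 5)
Your overall strategy (write down an explicit normalized connection form and integrate $\alpha\wedge(d\alpha)^{n-1}$ directly) is the same as the paper's, but your specific choice of $\alpha$ creates two real problems. First, the form $\alpha = \bigl(\sum_j r_j^2\,d\theta_j\bigr)/\bigl(\sum_j p_j r_j^2\bigr)$ is not defined on all of $S^{2n-1}$ when the weights have mixed signs: on the sphere $\sum_j r_j^2=1$ the denominator ranges over $[\min_j p_j,\max_j p_j]$, so it vanishes somewhere whenever some $p_i>0$ and some $p_j<0$ --- and the hypothesis only requires $p_j\in\Z\setminus\{0\}$. So your connection form does not exist globally in the generality the proposition demands. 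Second, the step you yourself identify as the crux --- expanding $(d\alpha)^{n}$ of a quotient form and organizing the cross terms over the simplex fibration --- is never carried out, and your fallback of quoting the top self-intersection $1/\prod_j p_j$ for the tautological orbibundle over weighted projective space essentially assumes the statement being proved (the paper in fact uses this proposition to \emph{establish} such Chern-class identities, e.g.\ Corollary \ref{corollary_Chern_class_of_sphere}).

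The paper avoids both issues by choosing the linear form $\alpha = \frac{1}{2\pi}\sum_j \frac{1}{p_j}(-y_j\,dx_j + x_j\,dy_j)$ in real coordinates. This is smooth on all of $\C^n$ for any nonzero weights of either sign, is easily checked to be normalized and invariant, and has \emph{constant} curvature $d\alpha = \frac{1}{\pi}\sum_j \frac{1}{p_j}\,dx_j\wedge dy_j$. Stokes' theorem then converts the integral over $S^{2n-1}$ into $\int_{D^{2n}}(d\alpha)^n = \pi^{-n}\frac{n!}{\prod_j p_j}\mathrm{Vol}(D^{2n}) = \frac{1}{\prod_j p_j}$ with no further bookkeeping. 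I would also flag that the exponent in your target integral should be $n-1$, not $n$: on the $(2n-1)$-sphere the relevant top form is $\alpha\wedge(d\alpha)^{n-1}$ (you inherit this slip from the statement, but your computation must use the correct degree). Your $n=1$ sanity check is fine, but as written the argument is a plan with a construction that fails for mixed-sign weights, not a proof.
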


\begin{proof}
We will use real coordinates $(x_j,y_j) = z_j = x_j + iy_j$ for $j=1,2,\cdots,n$. Recall that $\mathfrak{s^1}$ is identified with $\R$ which is parametrized by $\theta$ and $t = e^{2\pi i \theta}$.
For $X = \frac{\partial}{\partial \theta}$, we have 
\[
\underline{X} = 2\pi \sum_j p_j\left( -y_j \frac{\partial}{\partial x_j} + x_j \frac{\partial}{\partial y_j} \right).
\]
Define a connection form $\alpha$ on $S^{2n-1}$ such that 
\[
\alpha = \frac{1}{2\pi} \sum_j \frac{1}{p_j} (-y_j dx_j + x_j dy_j).
\]
Then we can easily check that $\alpha$ is a normalized connection form on $S^{2n-1}$.
By differentiating $\alpha$, we have 
\begin{align*}
d\alpha &  =  \frac{1}{2\pi} \sum_j \frac{1}{p_j}(-dy_j \wedge dx_j + dx_j \wedge dy_j)\\
&  =    \frac{1}{2\pi} \sum_j \frac{2}{p_j}dx_j \wedge dy_j\\
&  =    \frac{1}{\pi} \sum_j \frac{1}{p_j} dx_j \wedge dy_j.
\end{align*}
Therefore, we have 
\[
\int_{S^{2n-1}} \alpha \wedge (d\alpha)^{n-1} = \int_{D^{2n}} (d\alpha)^n  =  \pi^{-n} \frac{n!}{\prod_j p_j} \mathrm{Vol}(D^{2n}) = \frac{1}{\prod_j p_j}
\]
where the first equality comes from the Stoke's theorem.
\end{proof}

\begin{corollary}\label{corollary_Chern_class_of_sphere}
Let $\pi : ES^1 \rightarrow BS^1$ be the universal $S^1$-bundle and let $\alpha_0$ be a normalized connection form on $ES^1$. 
Then the curvature form $\Theta_{\alpha_0}$ on $BS^1$ represents the first Chern class of the complex line bundle 
\[
\mathcal{O}(1) = ES^1 \times_{S^1} \C
\]
where $S^1$ acts on $ES^1 \times \C$ by $ t\cdot (x,z) = (t\cdot x, tz)$ for every $t\in S^1$ and $(x,z) \in ES^1 \times \C$. 
\end{corollary}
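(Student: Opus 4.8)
The plan is to anchor the Chern--Weil class $[\Theta_{\alpha_0}]$ against the topological first Chern class by reducing the whole statement to a single explicit computation on a low-dimensional sphere. I will use the standard models $ES^1 = S^\infty = \bigcup_n S^{2n-1}$ and $BS^1 = \CP^\infty = \bigcup_n \CP^{n-1}$, where $S^{2n-1}\subset\C^n$ carries the diagonal $S^1$-action with all weights equal to $1$ and $\CP^{n-1} = S^{2n-1}/S^1$. Both classes in question, $[\Theta_{\alpha_0}]$ and $c_1(\mathcal{O}(1))$, live in $H^2(BS^1;\R)$, which is one-dimensional, so it suffices to show that they agree after restriction to $\CP^1 = S^3/S^1$.

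First I would observe that the inclusion $\CP^1 \hookrightarrow \CP^\infty$ induces an isomorphism $H^2(BS^1;\R)\to H^2(\CP^1;\R)$, so the identity $[\Theta_{\alpha_0}] = c_1(\mathcal{O}(1))$ on $BS^1$ follows from the corresponding equality of restricted classes on $\CP^1$. The covering inclusion $S^3 \hookrightarrow S^\infty$ is $S^1$-equivariant, hence $\alpha_0|_{S^3}$ is again a normalized connection form; combining this with the uniqueness of the push-forward in Lemma \ref{lemma_not_depending_on_the_choice_of_connection}, the restriction of $\Theta_{\alpha_0}$ to $\CP^1$ coincides with $\Theta_{\alpha_0|_{S^3}}$. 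Likewise, $\mathcal{O}(1)$ restricts to the associated line bundle $S^3\times_{S^1}\C$ over $\CP^1$, which is the hyperplane bundle.

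Next I would compute the two integrals over the closed oriented surface $\CP^1$. On one side, Proposition \ref{prop_Chern_number_sphere} with $n=2$ and $p_1 = p_2 = 1$ gives
\[
\int_{\CP^1}\Theta_{\alpha_0|_{S^3}} = \int_{S^3}\alpha_0|_{S^3}\wedge d(\alpha_0|_{S^3}) = 1.
\]
On the other side, the hyperplane bundle has degree $1$, so $\int_{\CP^1} c_1(\mathcal{O}(1)) = 1$ as well. Since a degree-$2$ de Rham class on $\CP^1$ is completely determined by its integral, the two restricted classes coincide, and the desired identity on $BS^1$ follows. I choose $n=2$ precisely to sidestep the sign ambiguity that a higher-dimensional $\CP^{n-1}$ would introduce: there, matching the two classes through $\int c_1^{\,n-1}$ only pins them down up to a real $(n-1)$-th root of unity, whereas on a surface the integral of the $2$-form itself settles the matter outright.

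The main thing to get right is the bookkeeping of signs and orientations. One must check that the orientation on $\CP^1$ implicit in Proposition \ref{prop_Chern_number_sphere} (the complex orientation inherited from $D^4\subset\C^2$ through Stokes' theorem) is the same complex orientation used to declare that $\mathcal{O}(1)$ has degree $+1$, and that the action convention $t\cdot(x,z) = (t\cdot x, tz)$ really produces the hyperplane bundle rather than its dual. Once these conventions are aligned both integrals are genuinely $+1$ and the argument closes; the passage from $\CP^1$ back to the infinite-dimensional $BS^1$ is then routine given the $H^2$-isomorphism above.
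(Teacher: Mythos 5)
Your proposal is correct and takes essentially the same route as the paper: both reduce to the finite-dimensional model $S^3 \to \CP^1$ inside $S^\infty \to \CP^\infty$, identify $c_1(\mathcal{O}(1))$ with the positive generator of $H^2(\CP^1;\Z)$ (degree $+1$), and match it against $\int_{S^3}\alpha_0\wedge d\alpha_0 = 1$ computed from Proposition \ref{prop_Chern_number_sphere} with $p_1=p_2=1$. Your additional remarks on the $H^2$-isomorphism under restriction and the orientation conventions are details the paper leaves implicit, but the argument is the same.
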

\begin{proof}
Recall that the universal bundle $\tilde q : ES^1 \rightarrow BS^1$ can be constructed as an inductive limit of the sequence of Hopf fibrations
\[
\xymatrix@C=1.5pc{
S^3\ \ar@{^(->}[r]\ar[d]& \ S^5\ \ar@{^(->}[r]\ar[d] & &\ \cdots & S^{2n+1}\ar[d] & \cdots & \ar@{^(->}[r]&\ ES^1\ar@<-2.2pc>[d] \sim S^{\infty}\hphantom{,\C P^{\infty}} \\
\C P^1\ \ar@{^(->}[r] & \ \C P^2\ \ar@{^(->}[r] & &\ \cdots & \C P^n & \cdots & \ar@{^(->}[r] &\ BS^1 \sim \C P^{\infty},\hphantom{S^{\infty}}
}
\]
where $S^1$ acts on $S^{2n-1} \subset \C^n$ by 
\[
t \cdot (z_1,z_2,\dots, z_n) = (tz_1, tz_2, \dots, tz_n).
\]
Since $\mathcal{O}(1)$ is the dual bundle of the tautological line bundle $\mathcal{O}(-1)$ over $BS^1$, we have 
\[
c_1(\mathcal{O}(1)) = u \in H^2(BS^1;\Z)
\]
where $u$ is the positive generator of $H^2(BS^1;\Z) \cong H^2(\C P^{\infty}; \Z)$. 
Thus it is enough to show that 
\[
\left\langle [\Theta_{\alpha_0}], [\C P^1] \right\rangle =  \int_{S^3} \alpha \wedge d\alpha =  1.
\]
This follows from Proposition \ref{prop_Chern_number_sphere}.
\end{proof}

\begin{remark}\label{remark_comparison_with_Kawasaki}
In \cite[page 245]{Ka}, Kawasaki described a cohomology ring structure (over $\Z$) of the quotient space $S^{2n+1} / S^1$ where $S^1$-action $\psi$ on $S^{2n+1}$ is given by
\[
t \cdot (z_0, z_1, z_2, \cdots, z_n) = (t^{p_0} z_0, t^{p_1} z_1, \cdots, t^{p_n}z_n)
\]
for any positive integers $p_0, p_1, \dots,p_n$ such that $\gcd(p_0, p_1, \cdots, p_n) = 1$.
The ring structure of $H^*(S^{2n+1} / S^1 ;\Z)$ is as follows.
Let $\gamma_k$ be the positive generator of $H^{2k}(S^{2n+1} / S^1 ;\Z) \cong \Z$.
Then 
\[
\gamma_1 \cdot \gamma_k = \frac{\ell_1 \ell_k}{\ell_{k+1}}\gamma_{k+1}
\]
where 
\[
\ell_k = \mathrm{lcm}\left\{\left.\frac{p_{i_0}p_{i_1}\cdots p_{i_k}}{\gcd(p_{i_0}, p_{i_1}, \cdots, p_{i_k})} ~\right|~ 0\leq i_0 < \cdots < i_k \leq n\right\}.
\]
In particular, we have $\ell_1 = \operatorname{lcm}(p_0,p_1,\cdots, p_n)$ and $\ell_n = p_0p_1\cdots p_n$ since the action is effective. 
Then it is not hard to show that 
\[
\gamma_1^n = \frac{\ell_1^n}{\ell_n} \gamma_n.
\]

On the other hand, Godinho \cite[Proposition 2.15]{Go} proved that the action has the first Chern class 
\[
c_1(S^{2n+1}, \psi) = \frac{\gamma_1}{\mathrm{lcm}(p_0, p_1, \cdots, p_n)} = \frac{\gamma_1}{\ell_1}.
\]
Consequently, the Chern number is 
\[
\langle c_1(S^{2n+1}, \psi)^n, [S^{2n+1} / S^1] \rangle = \frac{1}{\ell_n} \langle \gamma_n, [S^{2n+1} / S^1] \rangle = \frac{1}{p_0p_1\cdots p_n}
\]
which coincides with Proposition \ref{prop_Chern_number_sphere}.
\end{remark}

\begin{remark}
	In \cite{Lia}, Liang studied the Chern number of a $(2n+1)$-dimensional homotopy sphere $\Sigma^{2n+1}$
	equipped with a differentiable pseudo-free $S^1$-action 
	\[
		\phi : S^1 \times \Sigma^{2n+1} \rightarrow \Sigma^{2n+1}
	\]
	under certain assumption.
	More precisely, he proved that if there are exactly $k$ exceptional orbits $C_1, \cdots, C_k$ in $\Sigma^{2n+1}$
	with isotropy subgroups $\Z_{q_1}, \cdots, \Z_{q_k}$ for some positive integers 
	$q_1, \cdots, q_k$
	such that $\gcd (q_i, q_j) = 1$ for each $i,j$ with $i \neq j$, then 
	\[
		\langle c_1(\Sigma^{2n+1}, \phi)^n, [\Sigma^{2n+1} / S^1] \rangle = \pm \frac{1}{q_1\cdots q_k}.
	\]
	His result does not involve the condition ``modulo $\Z$'' since the proof relies on the fact \cite{MY} that  
	there exists an $S^1$-equivariant map of degree $\pm 1$ from $\Sigma^{2n+1}$
	to $S^{2n+1}$ where an $S^1$-action $\phi'$ on $S^{2n+1}$ is given by 
	\[
		t \cdot (z_1, \cdots, z_{n+1}) = (t^{q_1\cdots q_k}z_1, tz_2, \cdots, tz_{n+1}).
	\]
	Thus we can obtain
	\[
		\langle c_1(\Sigma^{2n+1}, \phi)^n, [\Sigma^{2n+1} / S^1] \rangle = \pm \langle c_1(S^{2n+1}, \phi')^n, [S^{2n+1} / S^1] \rangle = \pm \frac{1}{q_1\cdots q_k}
	\]
	where the equality on the right hand side comes from Proposition \ref{prop_Chern_number_sphere}.
	Consequently, we cannot extend Liang's result (without ``module $\Z$'') 
	to a general case by the lack of such an $S^1$-equivariant map to $S^{2n+1}$.
\end{remark}

\section{Proofs of Proposition \ref{proposition-exactlyoneexceptionalorbittype} and Theorem \ref{theorem-main}}

In this section, we give the complete proofs of Proposition \ref{proposition-exactlyoneexceptionalorbittype} and Theorem \ref{theorem-main}.
Throughout this section, for a given oriented manifold $M$, we denote $M$ with the opposite orientation by $-M$. 

\begin{definition}\label{def_resolution}
Let $(M,\psi)$ be a compact oriented fixed-point-free $S^1$-manifold with free $S^1$-boundary $\partial M$, i.e., $\psi$ is free on $\partial M$.
A {\em resolution} $\mathbf{N}$ of $(M, \psi)$ is a triple $(N,\phi,h)$ consisting of a compact oriented free $S^1$-manifold $(N,\phi)$ with boundary $\partial N$
and an orientation-preserving $S^1$-equivariant diffeomorphism
$h : \partial N\rightarrow \partial M$ with respect to $\phi$ and $\psi$.
\end{definition}

\begin{remark}
Suppose that $M$ and $N$ are given as in Definition \ref{def_resolution}. 
Then $M/S^1$ has singularities, while $N / S^1$ does not. If $W$ is a singular space and if there exists a subset of $W$ which is diffeomorphic to $M / S^1$, 
then we can always remove $M / S^1$ and glue $N / S^1$ along $\partial M / S^1$. In this manner, we can think of 
\[
\widetilde{W} := \left(W \setminus (\mathring{M} / S^1)\right) \bigsqcup N /S^1
\] 		
as a resolution of $W$. This is the reason why we use the terminology `resolution' in Definition \ref{def_resolution}.
\end{remark}

Let $(M,\psi)$ be an oriented compact fixed-point-free $S^1$-manifold with free $S^1$-boundary $\partial M$, and let $\mathbf{N}=(N, \phi, h)$ be a resolution of $(M, \psi)$. 
Then we can glue $M$ and $N$ along their boundaries $\partial M$ and $\partial N$ by using $h$ as follows.

By the equivariant collar neighborhood theorem \cite[Theorem 3.5]{K}, there exist closed
 $S^1$-equivariant neighborhoods of $\partial M$ and $\partial N$ which are
 $S^1$-equivariantly diffeomorphic to $\partial M \times [0,\epsilon]$ and $\partial N \times [0,\epsilon]$, respectively,
 where $S^1$ acts on the left factors. 
Then we may extend $h$ to a map $\bar h$ on $\partial N\times[0,\epsilon]$ to $\partial M\times[0,\epsilon]$ as
\[
\begin{array}{ccccc}
\bar h & : & \partial N \times [0,\epsilon]& \rightarrow & \partial M \times [0,\epsilon]\\[0.1em]
&   &  (x,t)                                        & \mapsto     & (h(x), \epsilon - t). \\
\end{array}
\]
Note that the extended map $\bar h$ is $S^1$-equivariant and orientation-reversing. Thus we can glue $\mathring{M}$ and $-\mathring{N}$ along 
$\partial M \times (0,\epsilon)$ and $\partial N\times(0,\epsilon)$ via $\bar h$. Thus we get a closed fixed-point-free $S^1$-manifold 
\[
M_{\mathbf{N}} = \mathring{M} \sqcup _{\bar h} -\mathring{N},
\]
where the $S^1$-action $\psi_{\mathbf{N}}$ on $M_{\mathbf{N}}$ is given by  
\[
\psi_{\mathbf{N}} = \psi\sqcup_{\bar h}  \phi.
\] 
Notice that if $\psi$ is pseudo-free, then so is $\psi_{\mathbf{N}}$.

\begin{lemma}\label{lem_euler_number_indep_choice_of_resolution}
Let $(M, \psi)$ be a compact fixed-point-free $S^1$-manifold with free $S^1$-boundary $\partial M$. Suppose that there exists a resolution $\mathbf{N}$ of $(M,\psi)$.
Then $e(M_{\mathbf{N}}, \psi_{\mathbf{N}})$ is independent of the choice of a resolution $\mathbf{N}$.
\end{lemma}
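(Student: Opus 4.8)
The plan is to compute $e(M_{\mathbf{N}})$ through the connection-form formula $\int_{M_{\mathbf{N}}}\alpha\wedge(d\alpha)^n$ and to isolate the contribution coming from $M$ from that coming from $N$, so that only the $M$-part survives modulo $\Z$. Let $\mathbf{N}_1=(N_1,\phi_1,h_1)$ and $\mathbf{N}_2=(N_2,\phi_2,h_2)$ be two resolutions of $(M,\psi)$. Since $\partial M$ is a closed free $S^1$-manifold, I would first fix a normalized connection form $\alpha_\partial$ on $\partial M$ and pull it back along the collar projection $p:\partial M\times[0,\epsilon]\to\partial M$; the form $p^*\alpha_\partial$ is again a normalized connection form on the collar and has no component in the collar direction. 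Using the equivariant collar neighborhood theorem together with the affine structure on the space of connection forms (one interpolates two normalized connection forms by a convex combination, which stays normalized), I would extend these to normalized connection forms $\alpha_M$ on $M$ and $\alpha_{N_i}$ on $N_i$ that all coincide with $p^*\alpha_\partial$ near their boundaries, after the identifications given by $h_i$. Because $e(M_{\mathbf{N}_i})$ is independent of the choice of normalized connection form (Lemma \ref{lemma_not_depending_on_the_choice_of_connection}), I may compute it with the glued form $\alpha_M\sqcup_{\bar h_i}\alpha_{N_i}$.

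The key observation is that the integrand vanishes on every collar region: since $\alpha_\partial$ and $d\alpha_\partial$ are pulled back from the $2n$-dimensional manifold $\partial M$, the $(2n+1)$-form $\alpha_\partial\wedge(d\alpha_\partial)^n$ is the pullback of a form of degree $2n+1>2n$ on $\partial M$, hence is identically zero. Therefore the integral over $M_{\mathbf{N}_i}$ splits with no cross term or boundary contribution,
\[
e(M_{\mathbf{N}_i}) = \int_{M}\alpha_M\wedge(d\alpha_M)^n - \int_{N_i}\alpha_{N_i}\wedge(d\alpha_{N_i})^n \modz,
\]
where the minus sign records the orientation reversal in passing to $-\mathring{N_i}$. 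The first term depends only on $(M,\psi)$ and $\alpha_M$ and is common to both resolutions, so subtracting gives
\[
e(M_{\mathbf{N}_1}) - e(M_{\mathbf{N}_2}) = \int_{N_2}\alpha_{N_2}\wedge(d\alpha_{N_2})^n - \int_{N_1}\alpha_{N_1}\wedge(d\alpha_{N_1})^n \modz.
\]

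To finish, I would glue $N_2$ and $-N_1$ along their boundaries using the orientation-preserving $S^1$-equivariant diffeomorphism $h_1^{-1}\circ h_2:\partial N_2\to\partial N_1$ and the collar construction of $\bar h$, producing a closed free $S^1$-manifold $P$. Since $\alpha_{N_1}$ and $\alpha_{N_2}$ both restrict to $p^*\alpha_\partial$ near their boundaries, they glue to a normalized connection form $\alpha_P$ on $P$, and the same vanishing-on-the-collar argument shows that the right-hand side above equals $\int_P\alpha_P\wedge(d\alpha_P)^n = \langle c_1(P,\phi_P)^n,[P/S^1]\rangle$. As $P$ carries a free $S^1$-action, $P/S^1$ is a closed smooth manifold and $E_P=P\times_{S^1}\C$ is a genuine complex line bundle, so this Chern number is an integer; thus $e(M_{\mathbf{N}_1})\equiv e(M_{\mathbf{N}_2})\modz$. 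I expect the main obstacle to be the careful choice of product-like connection forms and the verification that they glue compatibly across the orientation-reversing collar identifications, so that the integral decomposes cleanly; once this normalization is arranged, the integrality of the Chern number of the closed free manifold $P$ yields the conclusion at once.
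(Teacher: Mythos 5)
Your proposal is correct and follows essentially the same route as the paper: choose connection forms on $M$, $N_1$, $N_2$ that all agree with a fixed pullback form $p^*\alpha_\partial$ on the collars, observe that $\alpha_\partial\wedge(d\alpha_\partial)^n$ vanishes for degree reasons so the integrals split cleanly, and then glue $N_2$ and $-N_1$ into a closed free $S^1$-manifold whose Chern number is an integer. The paper's proof is the same argument with the roles of $h_1,h_2$ arranged slightly differently, so no further comparison is needed.
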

\begin{proof}
Suppose that there are two resolutions $\mathbf{N}_1=(N_1, \phi_1, h_1)$ and $\mathbf{N}_2=(N_2, \phi_2, h_2)$ of $(M, \psi)$ so that we have two 
closed fixed-point-free $S^1$-manifolds 
\begin{align*}
(M_{\mathbf{N}_1}, \psi_{\mathbf{N}_1}) &= (\mathring{M}\sqcup_{\bar h_1}-\mathring{N_1}, \psi \sqcup_{\bar h_1} \phi_1), ~\text{and}\\
(M_{\mathbf{N}_2}, \psi_{\mathbf{N}_2}) &=(\mathring{M}\sqcup_{\bar h_2}-\mathring{N_2},\psi \sqcup_{\bar h_2} \phi_2).
\end{align*}
For the sake of simplicity, we denote by  
$\bar M_i = M_{\mathbf{N}_i}$ and $\bar \psi_i = \psi_{\mathbf{N}_i}$ for each $i=1,2$.
Then our aim is to prove that 
\[
e(\bar M_1, \bar\psi_1) = e(\bar M_2, \bar\psi_2).
\]

Now, let $\alpha_\partial$ be a connection form on $\partial M$. 
Then $\alpha_\partial$ can be extended to a connection form, which we still denote by $\alpha_\partial$, 
on $\partial M \times [0,\epsilon]$ via the projection $\partial M \times [0,\epsilon] \rightarrow \partial M$.
Let $\alpha$ be a connection form on $M$ such that the restriction of $\alpha$ to a closed collar neighborhood $\partial M \times [0,\epsilon]$ 
is $\alpha_\partial$. Such an $\alpha$ always exists by the existence of a partition of unity (see \cite[Theorem 2.1]{KN}).
Similarly, for each $i=1,2$, we can construct a connection form $\alpha_i$ on $N_i$ such that the restriction of $\alpha_i$ on $\partial N_i\times[0,\epsilon]$ is the pull-back $\bar h_i^*\alpha_\partial$.

Since $\alpha$ and $\alpha_i$ agree on $\partial M\times[0,\epsilon]$ and $\partial N_i\times[0,\epsilon]$ via $\bar h_i$, we can define a connection form $\bar\alpha_i$ on $\bar M_i$ by gluing $\alpha$ and $\alpha_i$ via $\bar h_i$.
Then we have 
\begin{align*}
e(\bar M_1,\bar\psi_1)-e(\bar M_2,\bar\psi_2) 
\equiv& \int_{\bar M_1}\bar\alpha_1\wedge(d\bar\alpha_1)^n - \int_{\bar M_2}\bar\alpha_2\wedge(d\bar\alpha_2)^n \\
\equiv& \int_{(\mathring{M} \setminus \partial M\times(0,\epsilon)) \sqcup -\mathring{N}_1}\bar\alpha_1\wedge(d\bar\alpha_1)^n \\
&- \int_{(\mathring{M} \setminus \partial M\times(0,\epsilon)) \sqcup -\mathring{N}_2}\bar\alpha_2\wedge(d\bar\alpha_2)^n \\
\equiv& \int_{N_2}\alpha_2\wedge(d\alpha_2)^n - \int_{N_1}\alpha_1\wedge(d\alpha_1)^n \modz.
\end{align*}

On the other hand, $h:=h_2^{-1}\circ h_1 : \partial N_1 \rightarrow \partial N_2$ is an orientation-preserving $S^1$-equivariant diffeomorphism so that 
$(N_2,\phi_2,h)$ is a resolution of $(N_1, \phi_1)$. Thus we can glue $N_1$ and $N_2$ along the collar neighborhoods of their boundaries via $\bar h$.
If we let 
\[
(\bar N,\bar \phi) = (\mathring{N_2} \sqcup_{\bar h} -\mathring{N_1}, \phi_2\sqcup_{\bar h} \phi_1),
\]
then $(\bar N, \bar\phi)$ becomes a closed free $S^1$-manifold. 
In particular, $\alpha_i$ on $\partial N_i \times [0, \epsilon]$ agree with $\bar h_i^*\alpha_\partial$ 
so that there exists a connection form $\bar\alpha$ on $\bar N$ such that 
$\bar\alpha|_{\mathring{N_i}} = \alpha_i|_{\mathring{N_i}}$ for each $i=1,2$. Consequently, since $\bar\phi$ is free, we have 
\begin{align*}
0&\equiv\int_{\bar N} \bar\alpha \wedge (d\bar\alpha)^n \modz\\
&= \int_{\mathring{N_2}\setminus\partial N_2\times(0,\epsilon)\sqcup-\mathring{N}_1} \bar\alpha \wedge (d\bar\alpha)^n\\
&=\int_{N_2} \alpha_2 \wedge (d\alpha_2)^n - \int_{N_1} \alpha_1 \wedge (d\alpha_1)^n - \int_{\partial N_2\times(0,\epsilon)} \alpha_2 \wedge (d\alpha_2)^n.
\end{align*}
Since $\alpha_2$ on $\partial N_2\times(0,\epsilon)$ is the same as $\bar h_2^*\alpha_\partial$ and $\alpha_\partial\wedge (d\alpha_\partial)^n =0$, the last term vanishes. Therefore
\[
\int_{N_2} \alpha_2 \wedge (d\alpha_2)^n - \int_{N_1} \alpha_1 \wedge (d\alpha_1)^n \equiv 0 \modz
\]
which completes the proof.
\end{proof}

In general, for a compact fixed-point-free $S^1$-manifold with free $S^1$-boundary, 
we do not know whether a resolution always exists. However, if we consider a closed tubular neighborhood of an isolated exceptional orbit, 
then a resolution always exists (see Proposition \ref{proposition-exactlyoneexceptionalorbittype}). 
To show this, suppose that there exists a closed $S^1$-manifold $(M,\psi)$ having only one exceptional orbit $C$. 
Then the local data of $(M,\psi)$ is given by 
\[
\mathcal{L}(M,\psi)=\{(C, (p; \vec q))\},
\]
for some $p=p(C) \in \N$ and $\vec q=\vec q(C) = (q_1, q_2, \dots, q_n) \in\left(\Z_p^\times\right)^n$.
Then by Proposition~\ref{prop-localnormalform}, there exists a tubular neighborhood $\mathcal{U}\cong S^1\times\C^n$ of $C$ such that the $S^1$-action is given 
by 
\[
t \cdot (w, z_1,z_2, \cdots, z_n) = (t^pw, t^{q_1}z_1, t^{q_2}z_2, \cdots, t^{q_n}z_n)
\]
for every $t \in S^1$ and $(w,z_1,z_2,\dots, z_n) \in S^1 \times \C^n$.
Observe that the complement $M \setminus \mathcal{U}$ of $\mathcal{U}$ defines a resolution of $\bar{\mathcal{U}}$.
Thus Lemma \ref{lem_euler_number_indep_choice_of_resolution} implies that $e(M,\psi)$ depends only on $(\bar{\mathcal{U}}, \psi|_{\bar{\mathcal{U}}})$, 
or equivalently, the local invariant $(p;\vec q)$ of $C$.
We first show the existence of such an $(M,\psi)$ in the case where $n = 1$.
\begin{notation}
From now on, we denote the closed unit disk in $\C$ by $D$ and identify $\bar{\mathcal{U}}$ with $S^1\times D^{n}$.
Moreover, we denote $(S^1 \times D^n, (p;q_1,q_2,\dots, q_n))$ the space $S^1 \times D^n$ equipped with an $S^1$-action given by 
\[
		t \cdot (w,z_1,z_2,\dots, z_n) = (t^pw, t^{q_1}z_1, t^{q_2}z_2, \dots, t^{q_n}z_n)
\]
for every $t\in S^1$ and $(w,z_1,z_2,\dots,z_n) \in S^1 \times D^n$.
\end{notation}
\begin{lemma}\label{lem_3dim}
Let $p > 1$ be an integer and let $q \in \Z_p^{\times}$. 
Then there exists a 3-dimensional closed pseudo-free $S^1$-manifold $(M,\psi)$ having exactly one exceptional orbit $C$ of $(p;q)$-type.
Furthermore, we have 
\[
e(M,\psi) = \frac{q^{-1}}p \modz
\]
where $q^{-1}q \equiv 1$ in $\Z_p^\times$.
\end{lemma}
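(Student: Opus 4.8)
The plan is to realize $(M,\psi)$ by capping off the local model of the exceptional orbit with a \emph{free} piece, and then to compute $e(M,\psi)=\int_M\alpha\wedge d\alpha\modz$ by integrating an adapted connection form over the two pieces separately. Write $\bar{\U}=(S^1\times D,(p;q))$ for the closed local model, so that $t\cdot(w,z)=(t^pw,t^qz)$; the exceptional orbit is the central circle $\{z=0\}$, and $\partial\bar{\U}=S^1\times S^1$ carries the action $t\cdot(w,u)=(t^pw,t^qu)$, which is \emph{free} precisely because $\gcd(p,q)=1$. I would glue $\bar{\U}$ to a free solid torus along this boundary; since $N:=M\setminus\mathring{\bar{\U}}$ is then a resolution of $\bar{\U}$, Lemma \ref{lem_euler_number_indep_choice_of_resolution} guarantees that $e(M,\psi)$ depends only on the local invariant $(p;q)$, so any convenient capping computes it.

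For existence, take $N=(S^1\times D,(1;0))$, i.e.\ $t\cdot(w',z')=(tw',z')$, a free $S^1$-manifold with free boundary $t\cdot(w',u')=(tw',u')$. An $S^1$-equivariant diffeomorphism $h:\partial N\to\partial\bar{\U}$ is specified on angular coordinates by a matrix $\left(\begin{smallmatrix}p & B\\ q & D\end{smallmatrix}\right)\in GL_2(\Z)$; equivariance forces the first column to be $(p,q)^{\mathsf T}$, and since $\gcd(p,q)=1$ Bezout's identity provides $B,D$ with $pD-qB=1$. Gluing $\bar{\U}$ and $N$ along $h$ through collar neighbourhoods, exactly as in the construction preceding Lemma \ref{lem_euler_number_indep_choice_of_resolution}, yields a closed oriented $3$-manifold $M$ (a lens space) whose only orbit with nontrivial isotropy is the central circle of $\bar{\U}$, of type $(p;q)$. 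Hence $\psi$ is pseudo-free with a single exceptional orbit, as required.

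To compute $e(M,\psi)$, I would choose a normalized connection form $\alpha$ adapted to the two charts. On $\bar{\U}$, writing $w=e^{2\pi i\phi}$ and $z=\rho e^{2\pi i\eta}$, set $\alpha=f(\rho)\,d\phi+g(\rho)\,d\eta$ subject to $pf+qg\equiv 1$ (so $\alpha(\underline X)=1$) and $g(0)=0$ (smoothness at the central orbit, where also $f(0)=1/p$); on $N$ set $\alpha=d\phi'+G(\rho')\,d\eta'$ with $G(0)=0$, and match across $\partial$ via $h$, which forces $G(1)=Bf(1)+Dg(1)$. Differentiating the normalization gives $pf'+qg'=0$, hence $gf'-fg'=-g'/p$, and a direct integration gives $\int_{\bar{\U}}\alpha\wedge d\alpha=-g(1)/p$ while $\int_N\alpha\wedge d\alpha=-G(1)$ up to orientation. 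Assembling $\int_M\alpha\wedge d\alpha$ from the two pieces, substituting the matching relation and using $pD-qB=1$, the terms proportional to $g(1)$ cancel and one is left with $e(M,\psi)\equiv B/p\modz$; reducing $pD-qB=1$ modulo $p$ gives $qB\equiv -1$, pinning $B$ down as $\pm q^{-1}$ and hence $e(M,\psi)\equiv q^{-1}/p\modz$.

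The one genuinely delicate point is the orientation and normalization bookkeeping needed to fix the sign of the answer as $+q^{-1}/p$ rather than $-q^{-1}/p$: this requires orienting the two pieces and the collar flip in $h$ consistently, and noting that reversing the orientation of $M$ simultaneously negates $e$ and replaces the type $(p;q)$ by $(p;-q)$, so each orientation is internally consistent. I would pin the sign down by cross-checking against Proposition \ref{prop_Chern_number_sphere}: the sphere $S^3\subset\C^2$ with $t\cdot(z_1,z_2)=(t^pz_1,tz_2)$ has exactly one exceptional orbit, of type $(p;1)$ (the chart $(w,z)=(z_1/|z_1|,z_2)$ exhibits slice weight $1$), and that proposition gives $e=1/p$, matching $+q^{-1}/p$ at $q=1$. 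A conceptual alternative that makes the inversion $q^{-1}$ appear without Bezout is to realize $M$ as a finite cyclic quotient of a linear sphere and apply Lemma \ref{lemma-diffeomorphismontoproductspace}(3), whose weight formula $s_i\equiv m^{-1}x_i$ is exactly the source of the inverse; I would use this as an independent check on the final formula.
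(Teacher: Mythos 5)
Your proof is correct, but it takes a genuinely different route from the paper's. The paper realizes $(M,\psi)$ as the quotient $S^3/\Z_m$ with $m=q^{-1}\bmod p$ of the linear sphere with weights $(p,1)$: Lemma \ref{lemma-diffeomorphismontoproductspace}(3) identifies the residual local type as $(p;q)$ (this is where $q^{-1}$ enters), and the Chern number is obtained functorially from $e(S^3,\bar\psi)=1/p$ (Proposition \ref{prop_Chern_number_sphere}) by comparing connection forms on $S^3$ and on the quotient, giving $m/p=q^{-1}/p$ with no boundary matching and no sign analysis --- this is precisely the ``conceptual alternative'' you mention at the end, except that in the paper it is the primary proof rather than a check. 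You instead build $M$ by hand as a lens space, Dehn-filling the local model $(S^1\times D,(p;q))$ with a free solid torus along an equivariant gluing matrix $\left(\begin{smallmatrix}p & B\\ q & D\end{smallmatrix}\right)$ with $pD-qB=1$, and evaluate $\int_M\alpha\wedge d\alpha$ directly; the computation is sound (the $g(1)$-dependence does cancel, leaving $\mp B/p$ with $qB\equiv-1\bmod p$), and it has the virtue of making the Bezout origin of $q^{-1}$ completely transparent and of being self-contained in dimension three. What it costs is exactly the orientation bookkeeping you flag: your answer emerges as $\pm q^{-1}/p$, and you must normalize the sign externally; your cross-check against Proposition \ref{prop_Chern_number_sphere} at $q=1$ legitimately does this, since the ambiguous sign is a single global constant determined by orientation conventions and is independent of $(p,q)$. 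Two small points to tighten: smoothness of $\alpha=f\,d\phi+g\,d\eta$ at the core circle requires more than $g(0)=0$, because $d\eta$ is singular there --- take $g\equiv 0$ near $\rho=0$ (so $f\equiv 1/p$ there); and Lemma \ref{lem_euler_number_indep_choice_of_resolution} is not actually needed to prove this lemma, since one explicit capping suffices for both existence and the value of $e$, although it is what justifies your opening claim that any capping would give the same answer.
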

\begin{proof}
By Proposition \ref{prop-localnormalform} and Definition \ref{def_local_invariant}, 
there exists an $S^1$-equivariant closed tubular neighborhood of $C$ isomorphic to $(S^1 \times D, (p;q))$.
Let $m=q^{-1}$ be the inverse of $q$ modulo $p$ and $a$ be an integer satisfying 
\[ 
pa+mq = 1.
\]
Now, let us consider a linear $S^1$-action $\bar \psi$ on $S^3 = \partial (D \times D) \subset \C^2$ given by 
\[
	t\cdot (z_1,z_2) = (t^pz_1, tz_2).
\]
We first claim that $S^3 / \Z_m$ with the induced $S^1 / \Z_m$-action, namely $\psi$, is our desired manifold $(M,\psi)$ where $\Z_m \subset S^1$ is the cyclic subgroup of $S^1$ of order $m$. 

Observe that  $S^3 =D \times S^1 \cup S^1 \times D$ so that 
\[
S^3 / \Z_m = D \times_{\Z_m} S^1 \cup S^1 \times_{\Z_m} D. 
\]
Since the $S^1$-action on $D \times S^1$ is free, the induced $S^1 / \Z_m$-action $\psi$ on $D \times_{\Z_m} S^1$ is also free.  
Thus it is enough to show that the action $\psi$ on $S^1 \times_{\Z_m} D$ has only one exceptional orbit of type $(p;q)$. 

We apply Lemma~\ref{lemma-diffeomorphismontoproductspace} with $m$, $x_0=m_0=p$, and $x_1=m_1=1$. Then we can choose $a_1=a$ and $s_1=q$
and we have a $S^1$-equivariant diffeomorphism 
\[
\Phi:S^1\times_{\Z_m} D\to S^1\times D
\]
where the target admits the residual $S^1$-action given by
\[
t\cdot(w,z) = (t^{x_0}w, t^{s_1}z) = (t^p w, t^q z).
\]
Therefore $(S^1\times_{\Z_m} D,\psi)$ is $S^1$-equivariantly diffeomorphic to $(S^1\times D, (p;q))$ and so $(S^3/\Z_m,\psi)$ has exactly one exceptional orbit of $(p;q)$-type as desired.

On the other hand, by Proposition \ref{prop_Chern_number_sphere}, we have 
\[
e(S^3, \bar \psi) = \frac{1}{p}. 
\]
Let $\underline{X}$ ($\underline{X}^m$, respectively) be the fundamental vector field on $S^3$ ($S^3 / \Z_m$, respectively) with respect to $\bar \psi$ ($\psi$, respectively).
Then the quotient map $q : S^3 \rightarrow S^3 / \Z_m$ maps the fundamental vector field $\underline{X}$ to $m\underline{X}^m$. Thus if we choose any connection form 
$\alpha$ on $S^3 / \Z_m$, then $\frac{1}{m} q^*\alpha$ is a connection form on $S^3$. Therefore, we have 
\begin{align*}
e(S^3 / \Z_m, \psi) & = \int_{S^3 / \Z_m} \alpha \wedge d\alpha = \frac{1}{m} \int_{S^3} q^*\alpha \wedge d(q^*\alpha)\\
                              & = m \int_{S^3} \frac{1}{m}q^*\alpha \wedge \frac{1}{m} d(q^*\alpha) \\
                              & = \frac{m}{p} \equiv \frac{q^{-1}}{p} \modz.
\end{align*}
\end{proof}

\begin{remark}
In Lemma \ref{lem_3dim}, $(M,\psi)$ is not unique. 
For example, if $(M,\psi)$ is given in Lemma \ref{lem_3dim} and if we perform an $S^1$-equivariant Dehn surgery along a free orbit in $(M, \psi)$, then we get a new pseudo-free $S^1$-manifold $(\tilde{M}, \tilde{\psi})$ having exactly one exceptional orbit of $(p;q)$-type. 
\end{remark}

To prove Proposition \ref{proposition-exactlyoneexceptionalorbittype}, we need the following series of lemmas.

\begin{lemma}\label{lem_extend_one_exceptional_orbit}
Suppose that $(M,\psi)$ be a $(2n-1)$-dimensional closed pseudo-free $S^1$-manifold having only one exceptional orbit $C$ of $(p;q_1,q_2,\dots,q_{n-1})$-type 
where $p\in\N$ and $(q_1,\dots,q_{n-1}) \in (\Z_p^{\times})^{n-1}$. Then there exists a $(2n+1)$-dimensional closed pseudo-free $S^1$-manifold 
$(\tilde M, \tilde \psi)$ having 
only one orbit $\tilde C$ of $(p; q_1,q_2,\cdots, q_{n-1}, 1)$-type. Moreover, we have 
$e(M,\psi) = e(\tilde M, \tilde \psi)$.
\end{lemma}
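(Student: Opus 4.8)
The plan is to realize $\tilde M$ as a capped-off product $M\times D^2$, where $D^2\subset\C$ is the closed unit disk carrying the weight-one rotation $t\cdot z=tz$, and $S^1$ acts on $M\times D^2$ diagonally by $\tilde\psi_t(x,z)=(\psi_t x,tz)$. First I would read off the local structure: a point $(x,z)$ has isotropy $\mathrm{Stab}_\psi(x)\cap\mathrm{Stab}(z)$, and since the rotation on $D^2$ is free away from the origin, the only non-free orbits lie in $M\times\{0\}$, where the isotropy equals that of $x$. Hence the unique exceptional orbit is $C\times\{0\}$, and by Proposition \ref{prop-localnormalform} its slice representation is that of $C$ together with one extra weight-one coordinate, so $C\times\{0\}$ is of $(p;q_1,\dots,q_{n-1},1)$-type. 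On the boundary $M\times S^1$ the disk-factor action is free, so $M\times D^2$ is a compact fixed-point-free $S^1$-manifold with free boundary carrying exactly one exceptional orbit of the desired type.

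To close it up I would exhibit a resolution, i.e. a free filling $N$ of $\partial(M\times D^2)=M\times S^1$. The key observation is that the diagonal free manifold $M\times S^1$ is equivariantly the \emph{trivial} principal bundle over $M$: the map $(x,w)\mapsto(\psi_{w^{-1}}x,w)$ intertwines the diagonal action with the action on the second factor alone, and $y\mapsto(y,1)$ is a global section. Thus any compact $B$ with $\partial B=M$ yields a free filling $N=B\times S^1$ (with $S^1$ rotating the second factor) whose boundary carries the product connection. Such a $B$ exists because $M$, admitting a fixed-point-free $S^1$-action, is null-cobordant: it bounds the associated disk orbibundle $M\times_{S^1}D^2$, whose singularities are isolated cones over lens spaces and may be resolved without altering the boundary $M$. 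Setting $\tilde M:=(M\times D^2)_{\mathbf N}$ then gives a closed pseudo-free $S^1$-manifold with a single exceptional orbit of $(p;\vec q,1)$-type, and Lemma \ref{lem_euler_number_indep_choice_of_resolution} ensures $e(\tilde M)$ does not depend on the choice of $N$.

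It remains to compute $e(\tilde M)=e(M,\psi)$. Let $\alpha_M$ be a normalized connection form on $M$ (Proposition \ref{prop_existence_connection_form}), and on the disk factor take $\beta=\frac{1}{2\pi}\,d\phi$ with $z=re^{i\phi}$. For a cutoff $f(r)$ with $f\equiv0$ near $r=0$ and $f\equiv1$ near $r=1$, the interpolation $\tilde\alpha=(1-f)\alpha_M+f\beta$ is a smooth normalized connection on $M\times D^2$ (each summand pairs to $1$ with the fundamental field, and $\tilde\alpha=\alpha_M$ near the central orbit, where it is smooth). On $N=B\times S^1$ I would use the flat product connection $\frac{1}{2\pi}\,d\phi$, which matches $\beta$ on the boundary and has $(d\alpha_N)^n=0$, so $\int_N\alpha_N\wedge(d\alpha_N)^n=0$. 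The heart is the curvature computation on $M\times D^2$: since $(d\alpha_M)^n$ vanishes for degree reasons on the $(2n-1)$-dimensional $M$ and $d\beta=0$, only the single cross term survives and the convex-combination coefficients collapse, giving $\tilde\alpha\wedge(d\tilde\alpha)^n=n(1-f)^{n-1}f'\,\big(\alpha_M\wedge(d\alpha_M)^{n-1}\big)\wedge dr\wedge\beta$. Integrating, the radial factor telescopes, $\int_0^1 n(1-f)^{n-1}f'\,dr=1$, so that $\int_{M\times D^2}\tilde\alpha\wedge(d\tilde\alpha)^n=\int_M\alpha_M\wedge(d\alpha_M)^{n-1}$. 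Gluing the two pieces, $e(\tilde M)\equiv\int_{M\times D^2}\tilde\alpha\wedge(d\tilde\alpha)^n-\int_N\alpha_N\wedge(d\alpha_N)^n\equiv e(M,\psi)\modz$, which is the assertion.

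I expect the genuinely delicate points to be the existence of the resolution and the vanishing of its contribution, not the form identity. The existence argument rests on $M$ being null-cobordant, and the vanishing of $\int_N\alpha_N\wedge(d\alpha_N)^n$ rests on choosing the boundary connection $\beta$ to be flat with trivial holonomy, which is exactly what the triviality of $M\times S^1\to M$ provides. The differential-form computation, though the technical core, is routine once the cutoff interpolation $\tilde\alpha$ is set up so that the $M$-directions and the disk-directions decouple.
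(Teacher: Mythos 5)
Your argument is correct and follows the same skeleton as the paper's proof: take $M\times D$ with the diagonal action, identify $C\times\{0\}$ as the unique exceptional orbit of $(p;q_1,\dots,q_{n-1},1)$-type, cap off the free boundary $M\times S^1$ by a resolution in the sense of Definition~\ref{def_resolution} built out of the disk orbibundle $M\times_{S^1}D$, and evaluate $e(\tilde M,\tilde\psi)$ with the interpolated connection $(1-f)\alpha_M+f\beta$, using flatness of $\beta$ to kill the cap's contribution and the telescoping radial integral to recover $e(M,\psi)$. The one place you genuinely deviate is the construction of the free filling. The paper retains the residual action $t\cdot[x,z]=[x,t^{-1}z]$ on $E=M\times_{S^1}D$, and therefore needs an \emph{equivariant} resolution of the isolated cyclic quotient singularity at the image of $C\times\{0\}$ (it invokes the toroidal resolution of \cite{KKMS}) so that the action extends over $E'$ and $E'\times S^1$ becomes a free $S^1$-manifold with the correct boundary action. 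Your observation that $(x,w)\mapsto(\psi_{w^{-1}}x,w)$ trivializes $\partial(M\times D)=M\times S^1$ as a principal bundle lets you instead take $N=B\times S^1$ with $S^1$ rotating only the circle factor, for \emph{any} compact oriented $B$ with $\partial B=M$; a non-equivariant resolution of the quotient singularity of $M\times_{S^1}D$ then suffices to produce $B$, which is a mild but genuine simplification of the same construction. Your pointwise identity $\tilde\alpha\wedge(d\tilde\alpha)^n=n(1-f)^{n-1}f'\,\alpha_M\wedge(d\alpha_M)^{n-1}\wedge dr\wedge\beta$ is also the correct one: the paper's displayed integrand, with coefficient $\left((1-f)^{n+1}\right)'$ and the power $(d\alpha)^n$ on the $(2n-1)$-dimensional $M$, contains typographical slips, although its radial integral still evaluates to $1$ and the conclusion is unaffected.
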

\begin{proof}
Recall that $D$ is the unit disk in $\C$. 
Let us consider a manifold $M \times D$ with an $S^1$-action $\bar \psi$ given by 
\[
t \cdot (x, z) := (t\cdot x, tz)
\]
for every $t \in S^1$ and $(x,z) \in M \times D$. 
Then it is obvious that $\bar \psi$ has only one exceptional orbit $C \times \{0\}$ of $(p; q_1,q_2,\cdots, q_{n-1}, 1)$-type. 
Thus it is enough to construct a resolution of $(M \times D, \bar \psi)$.

Let $E = M \times_{S^1} D$ with an $S^1$-action $\phi$ given by 
\[
	t \cdot [x,z] = [t\cdot x,z] = 
	[x, t^{-1} z]
\]
for every $t \in S^1$ and $[x,z] \in M \times_{S^1} D$. Then we have $\partial E = M \times_{S^1} S^1 = M$ and $\phi$ on 
$\partial E$ coincides with $\psi$. Thus the product space $N = E \times S^1$ with an $S^1$-action $\bar \phi$ given by 
\[
	t \cdot ([x,z], w) = (t\cdot [x,z], tw)
\]
has a boundary $\partial N = M \times S^1$ such that $\bar \phi|_{\partial N} = \bar \psi|_{M \times S^1}$ via the canonical identification map $h : \partial(M \times D) \rightarrow \partial N = \partial(E \times S^1)$. Obviously, $\bar \phi$ is free on $N$ so that 
$(N, \bar \phi, h)$ is a resolution of $(M \times D, \bar\psi)$ if $N$ is smooth. 
However, the problem is that $E$ is not smooth and neither is $N$ in general. In fact, there is only one isolated singularity $C \times_{S^1} \{0\}$ on the zero section $M \times_{S^1} \{0\} \subset E$ where $C$ is the unique exceptional orbit of $(M,\psi)$. Locally, a neighborhood of $C \times_{S^1} \{0\}$ is 
$S^1$-equivariantly diffeomorphic to 
\[
	(S^1 \times \C^{n-1}) \times_{S^1} \C \cong \C^{n-1} \times_{\Z_p} \C, 
\]
where $S^1$-action $\phi$ on $\C^{n-1} \times_{\Z_p} \C$ is given by 
\[
t \cdot [z_1,z_2,\cdots, z_{n-1}, z] = [z_1, z_2, \cdots, z_{n-1}, t^{-1}z]
\]
for every $t\in S^1$ and $[z_1,z_2,\cdots, z_{n-1}, z] \in \C^{n-1} \times_{\Z_p} \C$.
In other words, $C \times_{S^1} \{0\}$ corresponds to the origin $\mathbf{0}$ in $\C^{n-1}\times_{\Z_p}\C$ which is a cyclic quotient singularity fixed by $\phi$. Furthermore, it is a toroidal singularity, i.e., $\C^{n-1} \times_{\Z_p} \C$ is an affine toric variety with the isolated singularity $\mathbf{0}$ equipped with a $(\C^*)^n$-action given by 
\[
(t_1,t_2,\cdots, t_n) \cdot [z_1,z_2,\cdots,z_{n-1},z] = [t_1z_1, t_2z_2, \cdots, t_{n-1}z_{n-1}, t_nz]
\]
for every $(t_1,t_2,\cdots, t_n) \in (\C^*)^n$ and $[z_1,z_2,\cdots, z_{n-1},z] \in \C^{n-1} \times_{\Z_p} \C$
such that $\phi$ acts as a subgroup of $(\C^*)^n$. Therefore, by \cite[Theorem 11]{KKMS}, there exists a $(\C^*)^n$-equivariant resolution of $\C^{n-1} \times_{\Z_p} \C$.
Consequently, there exists a $\phi$-equivariant resolution $E'$ of $E$ with an extended $S^1$-action $\phi'$. Thus $E' \times S^1$ admits a free $S^1$-action $\bar \phi'$ given by 
\[
	t \cdot (x, w) = (t\cdot x, tw)
\]
for every $t\in S^1$ and $(x,w) \in E' \times S^1$. Since $\partial E' = \partial E$ via the canonical identification map, say $ h'$, we have a triple $(E' \times S^1, \bar\phi', h')$ which is a resolution of $(M \times D, \bar \psi)$. Therefore, we get a $(2n+1)$-dimensional closed pseudo-free $S^1$-manifold $(\tilde M,\tilde\psi)$
\[
	\tilde M = M \times D \sqcup_{\bar h'} E' \times S^1,
\]
\[
\widetilde \psi = \bar \psi \sqcup_{\bar h'} \bar \phi'
\]
where $\bar h':\partial (E' \times S^1) \times[0,\epsilon] \to \partial (M\times D)\times[0,\epsilon]$ is an $S^1$-equivariant orientation-reversing diffeomorphism defined by $h'$ as before.
Obviously, $(\widetilde M, \widetilde \psi)$ has exactly 
one exceptional orbit of type $(p;q_1,q_2,\cdots, q_{n-1}, 1)$.
 
Now, it remains to show that $e(\widetilde M, \widetilde \psi) = e(M,\psi)$. Let $\beta = d\theta$ be the normalized connection form on $D \setminus \{0\}$ with respect to 
an $S^1$-action on $D$ given by $t \cdot z = tz$ where
\[
D=\{r e^{2\pi i \theta}|r, \theta\in[0,1]\}.
\]
We consider the pull-back of $\beta|_{\partial D=S^1}$ along the natural projection $E' \times S^1 \rightarrow S^1$ and denote by $\beta$ again. Then $\beta$ becomes a normalized connection form
on $(E' \times S^1, \bar \phi')$. 

We will construct a global normalized connection form on $(\widetilde M, \widetilde \psi)$ as follows.
Let $\alpha$ be a normalized connection form on $(M, \psi)$ and let $f:[0,1]\to[0,1]$ be a smooth function such that $f(r)\equiv 0$ near $r=0$ and $f(r)\equiv 1$ near $r=1$. 
Let 
\[
\hat \alpha = (1-f(r))\alpha+f(r)\beta
\] 
be a one-form on $M \times D$ where $r=|z|$ for $z\in D$. Though $\beta$ is not defined on the whole $M \times D$, the one-form $\hat \alpha$ is well-defined on the whole $M \times D$ since $f \equiv 0$ near $r = 0$.
Moreover, it is obvious that $\hat \alpha$ is a normalized connection form on $M \times D$. 
In particular, $\hat \alpha$ coincides with $\beta$ on a neighborhood
of $\partial (E' \times S^1) = M\times S^1=\partial(M\times D)$. Thus we can glue $\hat \alpha$ and $\beta$ so that we get a global normalized connection form $\widetilde \alpha$, i.e. 
$\widetilde \alpha$ is a connection form on $(\widetilde M, \widetilde \psi)$ such that 
\[
\widetilde \alpha|_{\partial (E' \times S^1)} = \beta\quad\text{ and }\quad
\widetilde \alpha|_{M \times D} = \hat \alpha.
\]
Since $d\beta = 0$ on $E' \times S^1$, we have 
\begin{align*}
e(\widetilde M,\widetilde \psi)&\equiv \int_{\widetilde M} \widetilde\alpha \wedge (d\widetilde \alpha)^{n} \modz\\
&=\int_{M \times D} \hat\alpha \wedge (d\hat \alpha)^n
+\int_{E' \times S^1} \beta \wedge (d\beta)^n 
-\int_{\partial(E'\times S^1)\times(0,\epsilon)} \beta \wedge (d\beta)^n\\ 
&= \int_{M\times D} \hat\alpha\wedge (d\hat\alpha)^{n} + 0 + 0\\
&=\int_{M \times D} \left((1-f)^{n+1}\right)' \beta \wedge dr \wedge \alpha\wedge (d\alpha)^n\\
&=\int_0^1-\left((1-f)^{n+1}\right)' dr \int_{\partial D} \beta  \int_{M}\alpha\wedge (d\alpha)^n\\
& = e(M, \psi)
\end{align*}
which completes the proof.
\end{proof}

\begin{lemma}\label{lem_quotient_formula}
Let $(M,\psi)$ be a $(2n+1)$-dimensional 
closed pseudo-free $S^1$-manifold with exactly one exceptional orbit $C$ of $(p;\vec{q})$-type where $p\in \N$ and $\vec q \in (\Z_p^{\times})^n$.
Then for any $r \in \N$ with $\gcd(p,r) = 1$, the quotient space $M / \Z_r$ with the induced $S^1 / \Z_r$-action $\psi_r$ is also a pseudo-free $S^1$-maifold with 
exactly one exceptional orbit of type $(p; r^{-1}\vec {q})$ where $r^{-1}$ is the inverse of $r$ in $\Z_p^{\times}$. Moreover, we have 
\[
	e(M / \Z_r, \psi_r) = r^n \cdot e(M,\psi) \modz.
\]
\end{lemma}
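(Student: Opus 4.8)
The plan is to prove the two halves of the lemma in turn: first the local/topological claim that $(M/\Z_r,\psi_r)$ is again pseudo-free with a single exceptional orbit of type $(p;r^{-1}\vec q)$, and then the quantitative comparison of Chern numbers, which will be a direct generalization of the computation already carried out in Lemma~\ref{lem_3dim}.

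First I would observe that $\Z_r\subset S^1$ acts freely on $M$. Since $(M,\psi)$ is pseudo-free with a unique exceptional orbit $C$ of isotropy $\Z_p$, the isotropy subgroup of every point of $M$ is either trivial or $\Z_p$; as $\gcd(p,r)=1$ we have $\Z_r\cap\Z_p=\{1\}$ inside $S^1$, so no nontrivial element of $\Z_r$ fixes a point. Hence $q\colon M\to M/\Z_r$ is a smooth $r$-fold covering, $M/\Z_r$ is a closed oriented $(2n+1)$-manifold, and the residual $S^1/\Z_r\cong S^1$-action $\psi_r$ is well-defined and fixed-point-free. To read off the local type I would take the invariant neighborhood $\U\cong(S^1\times\C^n,(p;\vec q))$ of $C$ given by Proposition~\ref{prop-localnormalform} and pass to the $\Z_r$-quotient. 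Applying Lemma~\ref{lemma-diffeomorphismontoproductspace}(3) with $m=r$, $x_0=m_0=p$ and $x_i=m_i=q_i$ (the running hypothesis $\gcd(r,p)=1$ is exactly what part (3) requires), $\U/\Z_r$ is $S^1$-equivariantly diffeomorphic to $(S^1\times\C^n,(p;s_1,\dots,s_n))$ with $s_i\equiv r^{-1}q_i\pmod p$. Since $r^{-1}$ is a unit mod $p$ and each $q_i\in\Z_p^{\times}$, we still have $s_i\in\Z_p^{\times}$, so $\psi_r$ is pseudo-free with exactly one exceptional orbit, of type $(p;r^{-1}\vec q)$.

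For the Chern-number identity I would compare normalized connection forms through the covering $q$. As in the proof of Lemma~\ref{lem_3dim}, the key point is that $q$ carries the fundamental vector field $\underline X$ of $\psi$ to $r\,\underline X^r$, where $\underline X^r$ is the fundamental vector field of $\psi_r$: the flow of $\underline X$ descends to a flow on $M/\Z_r$ of period $1/r$, because $e^{2\pi i/r}\in\Z_r$ now acts trivially. Thus, if $\alpha$ is a normalized connection form on $M/\Z_r$, then $\beta:=\tfrac1r\,q^*\alpha$ satisfies $\beta(\underline X)=1$ and is $S^1$-invariant, hence is a normalized connection form on $M$. Using that $q$ is an $r$-fold (orientation-preserving) covering and that $\alpha\wedge(d\alpha)^n$ is a top-degree form on $M/\Z_r$, one computes
\[
\int_M \beta\wedge(d\beta)^n
=\frac{1}{r^{\,n+1}}\int_M q^*\!\big(\alpha\wedge(d\alpha)^n\big)
=\frac{r}{r^{\,n+1}}\int_{M/\Z_r}\alpha\wedge(d\alpha)^n
=\frac{1}{r^{\,n}}\int_{M/\Z_r}\alpha\wedge(d\alpha)^n .
\]
The left-hand side is $\langle c_1(M,\psi)^n,[B]\rangle$ and the integral on the right is $\langle c_1(M/\Z_r,\psi_r)^n,[B]\rangle$, where $B=M/S^1\cong(M/\Z_r)/(S^1/\Z_r)$; so the display reads $\langle c_1(M/\Z_r,\psi_r)^n,[B]\rangle=r^n\,\langle c_1(M,\psi)^n,[B]\rangle$ as an identity of rational numbers. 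Reducing modulo $\Z$ yields $e(M/\Z_r,\psi_r)=r^n\,e(M,\psi)\modz$, as claimed.

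The computations are essentially bookkeeping, so I expect the only genuinely delicate points to be (i) the verification that $q_*\underline X=r\,\underline X^r$, since this is where the factor $r$ enters and it must be propagated consistently through the normalization of $\beta$ and the covering-degree formula $\int_M q^*\omega=r\int_{M/\Z_r}\omega$; and (ii) matching the indices when invoking Lemma~\ref{lemma-diffeomorphismontoproductspace}(3), so that the weights transform as $q_i\mapsto r^{-1}q_i$ rather than $r\,q_i$. Both are finite checks with no conceptual content beyond what already appears in Lemma~\ref{lem_3dim}, which is exactly the case $n=1$ (with $r=m=q^{-1}$) of the present statement.
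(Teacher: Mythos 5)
Your proposal is correct and follows essentially the same route as the paper: the local type is identified by applying Lemma~\ref{lemma-diffeomorphismontoproductspace}(3) with $m=r$, $x_0=m_0=p$, $x_i=m_i=q_i$, and the Chern number identity is obtained by noting that $q_*\underline{X}=r\,\underline{X}^r$, so that $\tfrac1r q^*\alpha$ is a normalized connection form on $M$, combined with the degree-$r$ covering formula. The only (harmless) difference is that you spell out why the $\Z_r$-action is free and why the flow rescaling gives the factor $r$, which the paper leaves as "straightforward."
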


\begin{proof}
Since $\gcd(p,r) = 1$, it is straightforward that the $\Z_r$-action on $M$ is free so that $M / \Z_r$ is a smooth manifold.
Let $\mathcal{U}$ be an $S^1$-equivariant neighborhood of $C$.
It is also obvious that $\psi$ is free on $M\setminus\mathcal{U}$ and therefore the induced $S^1 / \Z_r$-action $\psi_r$ is also free on $(M\setminus\mathcal{U})/\Z_r$. 
Therefore, there is no exceptional orbit in $M \setminus \mathcal{U}$ so that we need only to care about a neighborhood $\mathcal{U}$ of $C$.

We apply Lemma~\ref{lemma-diffeomorphismontoproductspace} with parameters $m=r$, $x_0=m_0=p$ and $x_i=m_i=q_i$ since $\Z_r$ is a subgroup of $S^1$.
Then $\mathcal{U}/\Z_r$ is $S^1$-equivalently diffeomorphic to $S^1\times\C^n$ such that the induced $S^1/\Z_r$-action on $S^1\times\C^n$ is given by
\[
t\cdot(w,z_1,\dots,z_n) = (t^p w, t^{s_1}z_1,\dots, t^{s_n} z_n) = (t^p w, t^{r^{-1}q_1}z_1,\dots,t^{r^{-1}q_n}z_n).
\]
where $r^{-1}$ is the inverse of $r$ in $\Z_p^\times$.

Now, it remains to show that 
\[
	e(M / \Z_r, \psi_r) = r^n \cdot e(M,\psi) \modz.
\]
Let $\underline{X}$ and $\underline{X}^r$ be the fundamental vector fields of $(M,\psi)$ and $(M/\Z_r,\psi_r)$, respectively. Then the quotient map $q : M \rightarrow M / \Z_r$ maps $\underline{X}$ to 
$r\underline{X}^r$. Let $\alpha$ be a normalized connection form on $M / \Z_r$. Then we can easily check that 
$\frac{1}{r} q^*\alpha$ is a normalized connection form on $M$. Then, 
\begin{align*}
	e(M/\Z_r, \psi_r) & = \int_{M / \Z_r} \alpha \wedge (d\alpha)^n = \frac{1}{r} \int_M q^*\alpha \wedge (q^*d\alpha)^n\\
	                                & = r^n \int_M \frac{1}{r}q^*\alpha \wedge (\frac{1}{r} q^*d\alpha)^n \\
	                                & = r^n \cdot e(M,\psi)
\end{align*}
which completes the proof.
\end{proof}

Now we are ready to prove Proposition~\ref{proposition-exactlyoneexceptionalorbittype}.

\begin{proposition}[Proposition \ref{proposition-exactlyoneexceptionalorbittype}]
Let $p > 1$ be an integer and let $\vec{q} = (q_1, \cdots, q_n) \in (\Z_p^{\times})^n$.
Then there exists a $(2n+1)$-dimensional oriented closed pseudo-free $S^1$-manifold $(M,\psi)$ having exactly one exceptional orbit $C$ of $(p;\vec{q})$-type. 
Moreover, 
\[
e(M, \psi) = \frac{q_1^{-1}q_2^{-1}\cdots q_n^{-1}}{p} \modz
\]
where $q_j^{-1}$ is the inverse of $q_j$ in $\Z_p^{\times}$.
\end{proposition}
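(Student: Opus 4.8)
The plan is to argue by induction on $n$, taking Lemma \ref{lem_3dim} as the base case and assembling the higher-dimensional examples from two operations already established: the dimension-raising construction of Lemma \ref{lem_extend_one_exceptional_orbit}, which turns a $(2n-1)$-dimensional example of type $(p;q_1,\dots,q_{n-1})$ into a $(2n+1)$-dimensional example of type $(p;q_1,\dots,q_{n-1},1)$ without changing $e$, and the free quotient of Lemma \ref{lem_quotient_formula}, which replaces a weight-vector $\vec q$ by $r^{-1}\vec q$ and multiplies $e$ by $r^n$ whenever $\gcd(p,r)=1$. Both operations stay within the class of closed oriented pseudo-free $S^1$-manifolds with a single exceptional orbit, so the induction does not leave the setting of the proposition. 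The point to keep in mind is that the extension can only \emph{append} the weight $1$ and the quotient scales \emph{all} weights by the same unit $r^{-1}\in\Z_p^{\times}$; the whole trick is therefore to feed the induction the correctly \emph{normalized} weight-vector so that an append-then-rescale lands exactly on the prescribed $(q_1,\dots,q_n)$.

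Concretely, to realize a given type $(p;q_1,\dots,q_n)$ I would first invoke the induction hypothesis to produce a $(2n-1)$-dimensional closed pseudo-free $(M',\psi')$ whose unique exceptional orbit has type $(p;\,q_1q_n^{-1},\dots,q_{n-1}q_n^{-1})$, all entries lying in $\Z_p^{\times}$ since it is a group. Applying Lemma \ref{lem_extend_one_exceptional_orbit} yields a $(2n+1)$-dimensional $(M'',\psi'')$ of type $(p;\,q_1q_n^{-1},\dots,q_{n-1}q_n^{-1},1)$ with $e(M'',\psi'')=e(M',\psi')$. Finally, choose a positive integer $r\ge 1$ with $r\equiv q_n^{-1}\pmod p$; since $q_n^{-1}\in\Z_p^{\times}$, such an $r$ is automatically coprime to $p$, so Lemma \ref{lem_quotient_formula} applies and $M''/\Z_r$ is a closed pseudo-free $S^1$-manifold whose unique exceptional orbit has type $(p;\,r^{-1}q_1q_n^{-1},\dots,r^{-1}q_{n-1}q_n^{-1},\,r^{-1})$. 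Because $r^{-1}\equiv q_n\pmod p$, this type is precisely $(p;q_1,\dots,q_{n-1},q_n)$, which gives the required existence statement with $(M,\psi)=(M''/\Z_r,\psi''_r)$.

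It then remains to track $e$ through the three steps. By the induction hypothesis, $e(M',\psi')\equiv \frac{\prod_{i=1}^{n-1}(q_iq_n^{-1})^{-1}}{p}\equiv\frac{q_n^{\,n-1}\,q_1^{-1}\cdots q_{n-1}^{-1}}{p}\modz$, and this value is preserved by the extension step. The quotient multiplies $e$ by $r^n$, and since $r\equiv q_n^{-1}\pmod p$ we have $r^n\equiv q_n^{-n}\pmod p$; as the class modulo $\Z$ of a fraction with denominator $p$ depends only on its numerator modulo $p$, the powers of $q_n$ telescope to give $e(M,\psi)\equiv\frac{q_n^{-n}\cdot q_n^{\,n-1}\,q_1^{-1}\cdots q_{n-1}^{-1}}{p}\equiv\frac{q_1^{-1}q_2^{-1}\cdots q_n^{-1}}{p}\modz$, which is the asserted formula. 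The heavy lifting—the gluing along collars and the toric resolution of the cyclic singularity—has already been absorbed into Lemmas \ref{lem_extend_one_exceptional_orbit} and \ref{lem_quotient_formula}, so I expect the only genuine subtlety to be organizational: choosing the normalization $q_iq_n^{-1}$ (rather than $q_i$) in the induction hypothesis so that the rigid ``append a $1$, then rescale uniformly'' pattern can reproduce an arbitrary weight-vector, together with keeping the modular bookkeeping of the $q_n$-powers straight.
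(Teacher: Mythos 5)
Your proposal is correct and is essentially the paper's own argument: the paper unrolls the same induction explicitly, writing $r_i=q_iq_{i+1}^{-1}$ so that the $(2k-1)$-dimensional stage carries the weight-vector $(q_1q_k^{-1},\dots,q_{k-1}q_k^{-1})$ — exactly your normalized vector — and then alternates Lemma \ref{lem_extend_one_exceptional_orbit} (append a $1$) with Lemma \ref{lem_quotient_formula} (quotient by $\Z_{r_k^{-1}}$ to rescale all weights by $r_k$), with the identical bookkeeping $q_n^{-n}\cdot q_n^{n-1}q_1^{-1}\cdots q_{n-1}^{-1}=q_1^{-1}\cdots q_n^{-1}$. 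No gaps; the two write-ups differ only in presentation.
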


\begin{proof}
Let $r_i = q_i q_{i+1}^{-1}\in\Z_p^\times$ for $i<n$ and let $r_n = q_n$. Then
\[
q_i \equiv r_ir_{i+1}\dots r_n\in\Z_p^\times
\]
for every $i=1,2,\cdots, n$. Thus $C$ is of $(p; r_1r_2\cdots r_n, \cdots, r_{n-1}r_n, r_n)$-type.

By Lemma \ref{lem_3dim}, there exists a three-dimensional closed pseudo-free $S^1$-manifold $(\widetilde M_1, \widetilde \psi_1)$ 
having exactly one orbit of type $(p; r_1)$ and 
\[
e(\widetilde M_1,\widetilde \psi_1) = \frac{r_1^{-1}}{p} = \frac{q_1^{-1}q_2}{p} \modz.
\]
By Lemma \ref{lem_extend_one_exceptional_orbit}, there exists a five-dimensional closed pseudo-free $S^1$-manifold $(M_2, \psi_2)$ 
having exactly one orbit of type $(p; r_1, 1)$ and 
\[
e(M_2,\psi_2) = e(\widetilde M_1, \widetilde \psi_1).
\]
Then, by Lemma \ref{lem_quotient_formula}, $\widetilde M_2 = M_2 / \Z_{r_2^{-1}}$ with the induced $S^1 / \Z_{r_2^{-1}}$-action $\widetilde \psi_2$
is a five-dimensional closed pseudo-free $S^1$-manifold having exactly one orbit of type $(p; r_1r_2, r_2)$ and 
\begin{align*}
	e(\widetilde M_2, \widetilde \psi_2) & = (r_2^{-1})^2 \cdot e(M_2, \psi_2) \\
	                                                            & = (q_2^{-2} q_3^2) \cdot e(\widetilde M_1, \widetilde \psi_1)\\
	                                                            & = \frac{q_1^{-1} q_2^{-1} q_3^2}{p} \modz
\end{align*}
Inductively, we get a $(2n+1)$-dimensional closed pseudo-free $S^1$-manifold $(\widetilde M_n, \widetilde \psi_n)$
having exactly one orbit of type $(p; r_1r_2\cdots r_n,\cdots, r_{n-1}r_n, r_n)$ and 
\begin{align*}
	e(\widetilde M_n, \widetilde \psi_n) & = (r_n^{-1})^{n} \cdot e(M_{n}, \psi_{n}) \\
	                                                            & = (q_n^{-n}) \cdot e(\widetilde M_{n-1}, \widetilde \psi_{n-1})\\
	                                                            & = (q_n^{-n}) \cdot \frac{q_1^{-1} q_2^{-1}\cdots  q_{n-1}^{-1}q_n^{n-1}}{p}\\
	                                                            & = \frac{q_1^{-1}q_2^{-1}\cdots q_n^{-1}}{p}
\end{align*}
which completes the proof.
\end{proof}

Now, we state and prove our main theorem \ref{theorem-main}.
\begin{theorem}[Theorem \ref{theorem-main}]
Suppose that $(M,\psi)$ is a $(2n+1)$-dimensional oriented closed pseudo-free $S^1$-manifold with the set $\mathcal{E}$ of exceptional orbits.
Then
\[
e(M,\psi) = \sum_{C\in\mathcal{E}} \frac{q_1(C)^{-1} q_2(C)^{-1}\cdots q_n(C)^{-1}}{p(C)} \modz
\]
where $q_j(C)^{-1} $ is the inverse of $q_j(C)$ in $\Z_{p(C)}^{\times}$. 
\end{theorem}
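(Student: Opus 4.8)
The plan is to reduce the general statement to the single-orbit computation of Proposition \ref{proposition-exactlyoneexceptionalorbittype} by an additivity (cut-and-paste) argument, using that the Chern number of a genuinely free action vanishes modulo $\Z$. Write $\mathcal{E} = \{C_1, \dots, C_k\}$. Since $\psi$ is pseudo-free the exceptional orbits are isolated, so I first choose pairwise disjoint $S^1$-equivariant closed tubular neighborhoods $\bar{\mathcal{U}}_{C} \cong S^1 \times D^n$, one for each $C \in \mathcal{E}$, on which the action is the local model $(p(C); \vec{q}(C))$ given by Proposition \ref{prop-localnormalformsection2}. Let $M_0 = M \setminus \bigsqcup_{C} \mathring{\mathcal{U}}_{C}$; this is a compact \emph{free} $S^1$-manifold with free boundary $\bigsqcup_C \partial\mathcal{U}_C$.

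For each $C$, Proposition \ref{proposition-exactlyoneexceptionalorbittype} supplies a closed pseudo-free $S^1$-manifold $(M_C, \psi_C)$ with a single exceptional orbit of type $(p(C); \vec{q}(C))$ and $e(M_C, \psi_C) \equiv \frac{q_1(C)^{-1}\cdots q_n(C)^{-1}}{p(C)} \modz$. Inside $M_C$ the orbit carries the very same local model $\bar{\mathcal{U}}_C$, and its complement $N_C = M_C \setminus \mathring{\mathcal{U}}_C$ is a free $S^1$-manifold with $\partial N_C \cong \partial \mathcal{U}_C$; thus $N_C$ is a resolution of $\bar{\mathcal{U}}_C$ in the sense of Definition \ref{def_resolution}.

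Next I would set up mutually compatible connection forms exactly as in Lemma \ref{lem_euler_number_indep_choice_of_resolution}, now simultaneously over all exceptional orbits. First fix, for each $C$, a normalized connection form $\alpha_C$ on the model $\bar{\mathcal{U}}_C$ equal to a chosen collar form $\alpha_\partial^C$ near $\partial \mathcal{U}_C$; then extend the collar data to a connection form $\alpha_0$ on $M_0$ and to a connection form $\beta_C$ on each $N_C$. Gluing these yields normalized connection forms on $M$ (namely $\alpha_0 \cup \bigcup_C \alpha_C$), on each $M_C$ (namely $\alpha_C \cup \beta_C$), and on the closed \emph{free} $S^1$-manifold $W = M_0 \cup_{\bar{h}} \bigsqcup_C (-N_C)$ (namely $\alpha_0 \cup \bigcup_C \beta_C$), where the orientations are arranged via the gluing maps $\bar{h}$ so that $\partial M_0$ matches $\partial(-N_C)$. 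Because the forms literally agree on the shared regions $M_0$, $\bar{\mathcal{U}}_C$, $N_C$, the integrand $\omega = \alpha \wedge (d\alpha)^n$ is unambiguous there, and since $\alpha_\partial^C \wedge (d\alpha_\partial^C)^n = 0$ on the collars (being pulled back from the $2n$-dimensional $\partial\mathcal{U}_C$), the integrals split additively with vanishing collar contributions:
\[
e(M,\psi)\equiv \int_{M_0}\omega+\sum_C\int_{\bar{\mathcal{U}}_C}\omega,\qquad
e(M_C,\psi_C)\equiv\int_{\bar{\mathcal{U}}_C}\omega+\int_{N_C}\omega,
\]
\[
e(W)\equiv\int_{M_0}\omega-\sum_C\int_{N_C}\omega\modz,
\]
the sign in the last line coming from the reversed orientation of $-N_C$ in $W$.

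Subtracting $\sum_C e(M_C,\psi_C)$ from $e(M,\psi)$ cancels every $\int_{\bar{\mathcal{U}}_C}\omega$ term, since the local model is oriented identically in $M$ and in $M_C$, and what remains is exactly $e(W)$. As the action on $W$ is free, $W/S^1$ is a smooth manifold and $M\times_{S^1}\C$ restricts to a genuine line bundle, so $e(W)\equiv 0 \modz$. Hence $e(M,\psi)\equiv\sum_{C\in\mathcal{E}} e(M_C,\psi_C) \equiv \sum_{C\in\mathcal{E}} \frac{q_1(C)^{-1}\cdots q_n(C)^{-1}}{p(C)} \modz$, which is the claim. I expect the main obstacle to be the simultaneous orientation bookkeeping across all the gluings, namely checking that $W$ is a well-oriented closed free $S^1$-manifold and that the collar terms cancel with the correct signs; this is a direct repetition of the mechanism already established in Lemma \ref{lem_euler_number_indep_choice_of_resolution}, carried out over all exceptional orbits at once.
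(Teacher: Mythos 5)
Your proposal is correct and follows essentially the same route as the paper: both reduce to the single-orbit manifolds of Proposition \ref{proposition-exactlyoneexceptionalorbittype} by excising the local models and gluing in free complements, with collar-matched connection forms, vanishing collar integrands, and the integrality of the Chern number for free actions doing the work, exactly as in Lemma \ref{lem_euler_number_indep_choice_of_resolution}. The only difference is organizational: you perform the surgery at all exceptional orbits simultaneously and conclude in one step from the closed free manifold $W$, whereas the paper replaces one orbit at a time and inducts on $|\mathcal{E}|$.
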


\begin{proof}
We use induction on the number of exceptional orbits. 
Suppose that $|\mathcal{E}| = 1$. Then it follows from Lemma \ref{lem_euler_number_indep_choice_of_resolution} that 
Theorem \ref{theorem-main} coincides with Proposition \ref{proposition-exactlyoneexceptionalorbittype}. 

Now, let us assume that Theorem \ref{theorem-main} holds for $|\mathcal{E}| = k-1$.
Let $(M,\psi)$ be a $(2n+1)$-dimensional 
closed pseudo-free $S^1$-manifold with $\mathcal{E}= \{C_1, C_2, \cdots, C_k\}$.

Assume that $C_1$ is of $(p;\vec q)$-type for some integers $p\ge 2$ and $\vec q\in(\Z_p^{\times})^n$.
By Proposition \ref{proposition-exactlyoneexceptionalorbittype}, there exists a $(2n+1)$-dimensional closed pseudo-free 
$S^1$-manifold $(N, \phi)$ having exactly one exceptional orbit $C'$ of $(p; \vec{q})$-type such that 
\[
e(N, \phi) = \frac{q_1^{-1}q_2^{-1}\cdots q_n^{-1}}{p} \modz.
\]
Let $\mathcal{U}$ and $\mathcal{U}'$ be $S^1$-equivariant tubular neighborhoods of $C_1$ and $C'$ respectively so that
\[
\bar{\mathcal{U}} \cong  (S^1 \times D^n, (p; \vec{q})) \cong \bar{\mathcal{U}'}
\]
where $D$ is the unit disk in $\C$. 
Let $\alpha$ be a normalized connection form on $S^1 \times D^n$ defined as a 
pull-back of a normalized connection form on $(S^1, p)$ via the projection map $(S^1 \times D^n, (p; \vec{q})) \rightarrow (S^1, p)$.
Let $\alpha_{\mathcal{U}}$ ($\alpha_{\mathcal{U'}}$, respectively) be the normalized connection form on $\mathcal{U}$ ($\mathcal{U}'$, respectively) induced by $\alpha$
via the identifications above.
Let $\alpha_M$ ($\alpha_N$, respectively) be an extension of $\alpha_{\mathcal{U}}$ ($\alpha_{\mathcal{U}'}$, respectively) to whole $M$ ($N$, respectively) so that 
\begin{itemize}
	\item $d\alpha_M = 0$ on $\mathcal{U}$ and 
	\item $d\alpha_N = 0$ on $\mathcal{U}'$.
\end{itemize}
Since there exists an obvious $S^1$-equivariant diffeomorphism $h : \partial \bar{\mathcal{U}'} \rightarrow \partial \bar{\mathcal{U}}$, we can easily check that
the triple $(N\setminus\mathcal{U}',\phi|_{N\setminus\mathcal{U}'}, h|_{\partial \bar{\mathcal{U}}'})$ is a resolution of $(M\setminus\mathcal{U},\psi|_{M\setminus\mathcal{U}})$ since $\phi$ is free on $N\setminus\mathcal{U}'$.

Now, let $(\bar M,\bar\psi)$ be a closed $S^1$-manifold obtained by gluing 
$(M\setminus\mathcal{U},\psi|_{M\setminus\mathcal{U}})$ and $(N\setminus\mathcal{U}', \phi|_{N\setminus\mathcal{U}'})$.
Since $h^* (\alpha_M|_{\partial {\mathcal{U}}}) = \alpha_N|_{\partial{\mathcal{U}'}}$, we can glue $\alpha_M|_{M \setminus \mathcal{U}}$ and 
$\alpha_N|_{N \setminus \mathcal{U}'}$ so that we get a normalized connection form $\bar{\alpha}$ on $\bar{M}$ such that 
\begin{itemize}
	\item $\bar{\alpha}|_{M \setminus \mathcal{U}} = \alpha_M$ and 
	\item $\bar{\alpha}|_{N \setminus \mathcal{U}'} = \alpha_N$.
\end{itemize}
Then,
\begin{align*}
e(M,\psi) - e(\bar M,\bar\psi) &= \int_M \alpha_M\wedge(d\alpha_M)^n - \int_{\bar M} \bar\alpha\wedge(d\bar\alpha)^n \\
&= \int_{\bar{\mathcal{U}}}  \alpha_M\wedge(d\alpha_M)^n
- \int_{-N \setminus \mathcal{U}' } \bar{\alpha_N} \wedge(d\bar{\alpha_N})^n\\
&= \int_N \alpha_N\wedge(d\alpha_N)^n
\end{align*}
where the last equality comes from the fact that 
\[
 \int_{\bar{\mathcal{U}}} \alpha_M\wedge(d\alpha_M)^n = \int_{\bar{\mathcal{U}'}} \alpha_N \wedge (d\alpha_N)^n = 0.
\]
Since $(\bar{M}, \bar{\psi})$ has $(k-1)$ exceptional orbits $C_2, \cdots C_k$, by induction hypothesis, we have 
\begin{align*}
e(M,\psi) &=e(\bar M,\bar\psi) + \int_N \alpha_N\wedge(d\alpha_N)^n \\
                &= e(\bar M,\bar\psi) + e(N,\phi) = \sum_{C\in\mathcal{E}} \frac{q_1(C)^{-1} q_2(C)^{-1}\cdots q_n(C)^{-1}}{p(C)} \modz
\end{align*}
which completes the proof.
\end{proof}

\section{Applications}\label{sec:applications}

In this section, we illustrate several applications of Theorem \ref{theorem-main}. 

Let $(M, \psi)$ be a $(2n+1)$-dimensional closed pseudo-free
$S^1$-manifold such that $e(M,\psi) = 0$. Then \ref{theorem-main} implies that 
\begin{equation}\label{equation_e=0}
	\sum_{C \in \mathcal{E}} \frac{q_1^{-1}(C)q_2^{-1}(C) \cdots q_n^{-1}(C)}{p(C)} \equiv 0 \modz
\end{equation}
where $\mathcal{E}$ is the set of exceptional orbits of $\psi$. Thus the condition $e(M,\psi)=0$ gives the constraint (\ref{equation_e=0}) on the local data
$\mathcal{L}(M,\psi)$.
We first give the proofs of Corollary \ref{corollary-thecasewheree0} and Corollary \ref{corollary-existencerelativelynotprime} as we see below.

\begin{corollary}[Corollary \ref{corollary-thecasewheree0}]
Suppose that $(M,\psi)$ is a closed oriented pseudo-free $S^1$-manifold with $e(M,\psi) = 0$. If the action is not free, then $M$ contains at least two exceptional orbits. 
If $M$ contains exactly two exceptional orbits, then they must have the same isotropic subgroup. 
\end{corollary}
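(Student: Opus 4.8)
The plan is to read everything off the congruence furnished by Theorem \ref{theorem-main}. Writing $a(C) := q_1(C)^{-1} q_2(C)^{-1} \cdots q_n(C)^{-1}$ for each exceptional orbit $C$, the hypothesis $e(M,\psi)=0$ becomes
\[
\sum_{C \in \mathcal{E}} \frac{a(C)}{p(C)} \in \Z.
\]
The one crucial observation I would isolate first is that, since each weight $q_j(C)$ lies in $\Z_{p(C)}^{\times}$, so does its inverse, and hence the product $a(C)$ is coprime to $p(C)$. In other words, every summand $\frac{a(C)}{p(C)}$ is already in lowest terms with denominator exactly $p(C)$; this is what prevents any partial cancellation of denominators between distinct orbits.

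For the first assertion I would argue by contradiction. Non-freeness together with pseudo-freeness forces $\mathcal{E}$ to be nonempty (and finite), so suppose $\mathcal{E}=\{C\}$ is a single orbit with isotropy $\Z_p$, $p>1$. Then the displayed sum collapses to the single fraction $\frac{a(C)}{p}$, which cannot be an integer because its numerator is coprime to $p>1$. This contradicts $e(M,\psi)=0$, forcing $|\mathcal{E}| \geq 2$.

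For the second assertion, suppose $\mathcal{E}=\{C_1,C_2\}$ with isotropy orders $p_1,p_2>1$, and set $a_i=a(C_i)$ so that $\gcd(a_i,p_i)=1$. The relation $\frac{a_1}{p_1}+\frac{a_2}{p_2}\in\Z$ clears to $p_1 p_2 \mid a_1 p_2 + a_2 p_1$. Reducing this divisibility modulo $p_1$ kills the term $a_2 p_1$ and leaves $p_1 \mid a_1 p_2$; since $\gcd(a_1,p_1)=1$ this yields $p_1 \mid p_2$. The symmetric computation modulo $p_2$ gives $p_2 \mid p_1$, whence $p_1=p_2$. Because $S^1$ has a unique cyclic subgroup of each order, equal orders mean the two orbits share the very same isotropy subgroup.

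The argument is almost entirely number-theoretic once Theorem \ref{theorem-main} is available, and its entire content is the coprimality of each $a(C)$ with $p(C)$. I do not expect a serious obstacle; the only point requiring slight care is the very first step, where one must invoke pseudo-freeness to ensure that ``not free'' produces a \emph{finite nonempty} set $\mathcal{E}$ of isolated exceptional orbits, so that the single-orbit reduction is legitimate.
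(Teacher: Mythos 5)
Your proof is correct and follows essentially the same route as the paper: both arguments rest on the single observation that $a(C)=q_1(C)^{-1}\cdots q_n(C)^{-1}$ is coprime to $p(C)$, rule out a lone exceptional orbit because $a(C)/p(C)$ cannot be an integer, and in the two-orbit case clear denominators and use coprimality to deduce $p(C_1)\mid p(C_2)$ and $p(C_2)\mid p(C_1)$. No gaps.
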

\begin{proof}
Recall that the condition `pseudo-free' implies that a numerator of each summand in (\ref{equation_e=0}) 
is never zero. Thus the first claim is straightforward by Theorem \ref{theorem-main}.  If there are exactly two exceptional orbits $C_1$ and $C_2$, then 
\[
\frac{q(C_1)^{-1}}{p(C_1)} + \frac{q(C_2)^{-1}}{p(C_2)} \equiv 0 \modz
\]
where 
\[
q(C_i)^{-1} = q_1(C_i)^{-1} q_2(C_i)^{-1}\cdots q_n(C_i)^{-1} \in \Z_{p(C_i)}^{\times}
\]
 for $i=1,2$. Then $\frac{p(C_2)q(C_1)^{-1}}{p(C_1)} \equiv 0 \modz$ and 
 $\frac{p(C_1)q(C_2)^{-1}}{p(C_2)} \equiv 0 \modz$. 
Thus $p(C_1) ~|~ p(C_2)$ and $p(C_2) ~|~ p(C_1)$ so that $p(C_1) = p(C_2)$.
\end{proof}

\begin{corollary}[Corollary \ref{corollary-existencerelativelynotprime}]
Suppose that $(M,\psi)$ is an oriented closed pseudo-free $S^1$-manifold with $e(M,\psi) = 0$.
If $C$ is an exceptional orbit with the isotropy subgroup $\Z_p$ for some $p >1$, there exists an exceptional orbit $C' \neq C$ with the isotropy subgroup $\Z_{p'}$ for some integer $p'$ such that $\gcd(p,p') \neq 1$.
\end{corollary}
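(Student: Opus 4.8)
The plan is to argue by contradiction using the divisibility constraint (\ref{equation_e=0}) supplied by Theorem \ref{theorem-main}, analyzing it one prime at a time through $\ell$-adic valuations. Since $e(M,\psi)=0$, we know $\sum_{C'\in\mathcal{E}}\frac{q_1^{-1}(C')\cdots q_n^{-1}(C')}{p(C')}\equiv 0\modz$, and the whole strategy is to show that a single ``isolated'' prime in one denominator cannot cancel against anything.

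First I would suppose the conclusion fails: there is an exceptional orbit $C$ with isotropy $\Z_p$, $p>1$, such that $\gcd(p,p(C'))=1$ for \emph{every} exceptional orbit $C'\neq C$. Fix a prime $\ell$ dividing $p$. Then $\ell\nmid p(C')$ for all $C'\neq C$, for otherwise $\ell$ would be a common prime factor of $p$ and $p(C')$, contradicting coprimality. Next I would set $a(C') := q_1^{-1}(C')\cdots q_n^{-1}(C')$ and choose for each orbit an integer representative coprime to $p(C')$; this is possible because each $q_i^{-1}(C')$ is a unit in $\Z_{p(C')}^{\times}$, so the product is too. With these representatives fixed, the left-hand side of (\ref{equation_e=0}) becomes a genuine rational number $S=\sum_{C'\in\mathcal{E}}\frac{a(C')}{p(C')}$ whose fractional part vanishes, that is, $S\in\Z$.

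Finally I would compute the $\ell$-adic valuation $v_\ell(S)$. For the distinguished orbit $C$ one has $v_\ell(p)\ge 1$, while $a(C)$ is coprime to $p$ and hence to $\ell$, so $v_\ell\bigl(\tfrac{a(C)}{p}\bigr)=-v_\ell(p)<0$. For every other $C'$ the denominator $p(C')$ is coprime to $\ell$ and $a(C')$ is an integer, so $v_\ell\bigl(\tfrac{a(C')}{p(C')}\bigr)\ge 0$. Thus exactly one summand has strictly negative valuation, and by the ultrametric (strong triangle) inequality the minimum is attained uniquely, forcing $v_\ell(S)=-v_\ell(p)<0$. This contradicts $S\in\Z$, which has $v_\ell(S)\ge 0$. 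The contradiction shows that some exceptional orbit $C'\neq C$ must satisfy $\gcd(p,p(C'))\neq 1$, as claimed.

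The hard part is not any single computation but the careful bookkeeping in the last step: one must verify that the chosen representative $a(C)$ is genuinely prime to $\ell$ (which is exactly what coprimality to $p$ guarantees), so that the $C$-term alone realizes the strictly minimal valuation and the strong triangle inequality applies with equality rather than a mere inequality. Everything else is routine once the problem is localized at a prime $\ell\mid p$.
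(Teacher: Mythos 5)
Your proof is correct and takes essentially the same approach as the paper: both argue by contradiction from the constraint $\sum_{C'}\frac{q_1^{-1}(C')\cdots q_n^{-1}(C')}{p(C')}\equiv 0 \modz$ of Theorem \ref{theorem-main}, using the coprimality of the denominators $p(C')$ to $p$ together with the coprimality of the numerator to $p$ to show the $C$-term cannot be cancelled. The paper clears denominators by multiplying by $p(C_2)\cdots p(C_k)$ where you instead compute an $\ell$-adic valuation at a prime $\ell\mid p$, but this is only a difference of packaging, not of substance.
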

\begin{proof}
Let $C_1, C_2, \cdots, C_k$ be exceptional orbits. Suppose that 
\[
\gcd(p(C_1), p(C_i)) = 1 ~\text{for ~every}~ i=2,3,\cdots, k.
\]
By Theorem \ref{theorem-main}, 
\[
\frac{q(C_1)^{-1}}{p(C_1)} + \frac{K}{p(C_2)p(C_3)\cdots p(C_k)} \equiv 0 \modz
\]
for some $K \in \Z$ where $q(C_1)=q_1(C_1) q_2(C_1)\cdots q_n(C_1)$.
By multiplying both sides by $p(C_2)p(C_3)\cdots p(C_k)$, we get 
\[
\frac{q(C_1)^{-1}\cdot p(C_2)p(C_3)\cdots p(C_k)}{p(C_1)} \in \Z
\]
which is a contradiction to the fact that $\gcd(p(C_1), q(C_1)) = \gcd(p(C_1), p(C_i)) = 1$ for $i = 2,3, \cdots, k$. 
Thus there exists some $C_j \neq C_1$ with $\gcd(p(C_1), p(C_j)) \neq 1$. 
\end{proof}

Now, we illustrate two types of such manifolds. One is a product manifold as follows.

\begin{proposition}[Proposition \ref{prop_euler_number_zero_product_case}]\label{prop_section_5_1}
Let $M_1$ and $M_2$ be closed $S^1$-manifolds where $\dim M_1 + \dim M_2 = 2n+1$. 
Consider $M = M_1 \times M_2$ equipped with the diagonal $S^1$-action which we denote by $\psi$. 
If $\psi$ is fixed-point-free, then $e(M,\psi) = 0$. 
\end{proposition}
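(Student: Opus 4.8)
The plan is to show that the diagonal action on a product $M = M_1 \times M_2$ can never be pseudo-free (unless it is actually free), so that the constraint in Theorem~\ref{theorem-main} is satisfied vacuously — the set $\mathcal{E}$ of exceptional orbits is empty. The key observation is that the isotropy subgroup of a point $(x_1, x_2) \in M_1 \times M_2$ under the diagonal action is the intersection of the isotropy subgroups,
\[
\mathrm{Stab}(x_1, x_2) = \mathrm{Stab}(x_1) \cap \mathrm{Stab}(x_2).
\]
So a point $(x_1, x_2)$ is an exceptional orbit exactly when this intersection is a nontrivial finite cyclic group $\Z_p$ with $p > 1$. The heart of the argument is that such exceptional points are never isolated in the quotient: if $(x_1, x_2)$ has isotropy $\Z_p$, I would like to exhibit a whole positive-dimensional family of points sharing the same isotropy, contradicting the definition of pseudo-free (which requires the singular locus of $M/S^1$ to be zero-dimensional, i.e.\ only finitely many exceptional orbits).

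First I would examine the fixed-point sets $M_i^{\Z_p}$ of the subgroup $\Z_p \subset S^1$ inside each factor. The exceptional locus of $M$ with isotropy exactly $\Z_p$ is governed by $M_1^{\Z_p} \times M_2^{\Z_p}$, since $(x_1, x_2)$ is fixed by $\Z_p$ iff each $x_i$ is. Now $M_i^{\Z_p}$ is a closed $S^1$-invariant submanifold of $M_i$ (being the fixed set of a finite group acting smoothly), and crucially it is a \emph{union of $S^1$-orbits}, hence has dimension at least $1$ whenever it is nonempty and contains no $S^1$-fixed point. The point is dimensional: if $(x_1, x_2)$ is exceptional with isotropy $\Z_p$, then $M_1^{\Z_p} \times M_2^{\Z_p}$ is a nonempty $S^1$-invariant subset containing $(x_1, x_2)$, and I would argue that its image in $M/S^1$ has positive dimension unless it degenerates — which is exactly what the product structure prevents.

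The main obstacle, and the step I expect to require the most care, is controlling the dimension count so as to genuinely force a positive-dimensional singular stratum in $M/S^1$. The subtlety is that one of the factors $M_i^{\Z_p}$ could consist entirely of a single free $S^1$-orbit, so a naive dimension estimate can fail; I would therefore compare the dimension of $(M_1^{\Z_p} \times M_2^{\Z_p})/S^1$ against $0$ and show it is strictly positive by using that $\dim M_1 + \dim M_2 = 2n+1$ is odd together with the fact that each $M_i^{\Z_p}$, as a fixed set of a rotation, has even codimension in $M_i$. Concretely, the slice representation normal to an orbit decomposes into nontrivial $\Z_p$-weight spaces (each real $2$-dimensional), so the fixed directions all lie tangent to the product factors in a way that leaves a free direction available; tracking this shows the exceptional stratum is never isolated. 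Once the singular set is shown to be positive-dimensional whenever it is nonempty, the hypothesis that $\psi$ is fixed-point-free \emph{and} the conclusion $e(M,\psi)=0$ both follow: either $\psi$ is free, in which case $e(M,\psi)=0$ by definition, or $\psi$ is non-free but not pseudo-free, so the setting reduces to the free case after noting $e$ vanishes because $\mathcal{E}$ carries no isolated contribution. I would close by invoking Theorem~\ref{theorem-main} in the degenerate form $\mathcal{E} = \varnothing$ to conclude $e(M,\psi) = 0 \modz$.
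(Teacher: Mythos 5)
Your proposal rests on the claim that the diagonal action on a product is never pseudo-free unless it is free, so that $\mathcal{E}=\varnothing$ and Theorem~\ref{theorem-main} applies vacuously. This claim is false, and the paper itself supplies the counterexample: in the proof of Corollary~\ref{cor_localization_isolated} one takes $M_1$ to be an almost complex $S^1$-manifold with isolated fixed points and $M_2=S^1$ with the action $t\cdot w=t^pw$ for a large prime $p$; the diagonal action on $M_1\times S^1$ is pseudo-free and non-free, with exactly one exceptional orbit $\{z\}\times S^1$ for each $z\in M_1^{S^1}$. Your dimension count breaks precisely in the situation you flag as the ``main obstacle'': $M_2^{\Z_p}$ can be all of $M_2$ while $M_1^{\Z_p}$ is a finite set of $S^1$-fixed points, so $M_1^{\Z_p}\times M_2^{\Z_p}$ is a finite union of single orbits and its image in $M/S^1$ is zero-dimensional. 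Indeed, if products never carried pseudo-free non-free diagonal actions, the paper's application of Proposition~\ref{prop_euler_number_zero_product_case} to deduce Corollary~\ref{cor_localization_isolated} would be empty.

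There is a second, independent gap: even where the action genuinely fails to be pseudo-free, you cannot conclude $e(M,\psi)=0$. The invariant $e(M,\psi)=\int_M\alpha\wedge(d\alpha)^n\modz$ is defined for every fixed-point-free action via a connection form (Proposition~\ref{prop_existence_connection_form}); it is not defined through the local data, and ``no isolated exceptional orbits'' does not make the integral vanish (weighted spheres with positive-dimensional singular strata have nonzero $e$). The paper's actual argument is a short direct computation: since $\psi$ is fixed-point-free, at least one factor, say $M_1$, has no fixed point; pulling back a connection form $\alpha_1$ on $M_1$ along the equivariant projection $\pi_1$ gives a connection form $\alpha=\pi_1^*\alpha_1$ on $M$, and $\alpha\wedge(d\alpha)^n=\pi_1^*\bigl(\alpha_1\wedge(d\alpha_1)^n\bigr)$ vanishes because it is the pullback of a $(2n+1)$-form from a manifold of dimension less than $2n+1$. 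Hence the Chern number is exactly $0$, with no case analysis on the orbit structure. You should abandon the structural claim about pseudo-freeness and argue directly with a pulled-back connection form.
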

\begin{proof}
Observe that the projections $\pi_1 : M_1 \times M_2 \rightarrow M_1$ and $\pi_2 : M_1 \times M_2 \rightarrow M_2$ are $S^1$-equivariant  
and that $M_1$ or $M_2$ cannot have a fixed point. Without loss of generality, we may assume that $M_1$ does not have a fixed point. Let $\alpha_1$ be a connection form on $M_1$. Then $\alpha := \pi_1^* \alpha_1$ becomes a connection form on $M_1 \times M_2$ and it satisfies 
\[
\int_{M_1\times M_2} \alpha \wedge (d\alpha)^n = \int_{M_1} \alpha_1 \wedge (d\alpha_1)^n = 0
\] 
for a dimensional reason. 
In particular, we have 
\[
e(M_1 \times M_2, \psi) \equiv  \int_{M_1\times M_2} \alpha \wedge (d\alpha)^n = 0 ~(\mathrm{mod} ~\Z).
\]
\end{proof}

Now, we will show that Theorem \ref{theorem-main} and Proposition \ref{prop_section_5_1}
induce some well-known result on the fixed point theory of circle actions. 
Suppose that $(M,J)$ is a $2n$-dimensional closed almost complex manifold equipped with an $S^1$-action with a discrete fixed point set $M^{S^1}$. 
Then for each fixed point $z \in M^{S^1}$, there exist non-zero integers $q_1(z), q_2(z), \cdots, q_n(z)$, called {\em weights} at $z$, such that the action is 
locally expressed by 
\[
	t \cdot (z_1, z_2, \cdots, z_n) = (t^{q_1(z)}z_1, t^{q_2(z)}z_2, \cdots, t^{q_n(z)}z_n)
\]
for any $t \in S^1$ where $(z_1,z_2, \cdots, z_n)$ is a local complex coordinates centered at $z$.
Let us recall the {\em Atiyah-Bott-Berline-Vergne localization theorem} : 
\begin{theorem}\cite{AB}\cite{BV}\label{theorem_ABBV}
	For any equivariant cohomology class $\gamma \in H^*_{S^1}(M;\R)$, we have 
	\[
		\int_M \gamma = \sum_{z \in M^{S^1}} \frac{\gamma|_z}{\prod_{i=1}^n q_i(z)x}
	\]
	where $\gamma|_z \in H^*_{S^1}(z ; \R) \cong H^*(BS^1) = \R[x]$ is the restriction of $\gamma$ onto $z$. 
\end{theorem}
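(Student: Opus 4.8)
The plan is to deduce the formula from the module structure of $S^1$-equivariant cohomology over $H^*_{S^1}(\mathrm{pt}) = H^*(BS^1;\R) = \R[x]$, following Atiyah--Bott. The guiding idea is that after inverting $x$ all equivariant cohomology concentrates on the fixed locus, which here is the finite set $M^{S^1}$ of isolated points, so the global integral is forced to be a sum of purely local contributions.

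First I would pass to the localized module $H^*_{S^1}(M)\otimes_{\R[x]}\R(x)$, where $\R(x)$ denotes the field of fractions of $\R[x]$. The structural input is that whenever $S^1$ acts on a space with only finite stabilizers, its equivariant cohomology with real coefficients is $\R[x]$-torsion, hence vanishes after tensoring with $\R(x)$. Applying this to $M\setminus M^{S^1}$, on which every stabilizer is finite by the very definition of the fixed set, and feeding it into the long exact sequence of the pair $(M, M\setminus M^{S^1})$, I obtain that the restriction
\[
i^* \colon H^*_{S^1}(M)\otimes_{\R[x]}\R(x) \longrightarrow H^*_{S^1}(M^{S^1})\otimes_{\R[x]}\R(x) = \bigoplus_{z\in M^{S^1}} \R(x)
\]
is an isomorphism.

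Next I would identify the inverse of $i^*$ through equivariant Euler classes. For each isolated fixed point $z$ the normal data is the tangent space $T_zM \cong \C^n$ on which $S^1$ acts with weights $q_1(z),\dots,q_n(z)$, so its equivariant Euler class is $e(z) = \prod_{i=1}^n \bigl(q_i(z)\,x\bigr)$, a unit in $\R(x)$ precisely because each $q_i(z)\neq 0$. The projection formula $(i_z)^*(i_z)_* = \cup\, e(z)$ then yields, after localization, the decomposition
\[
\gamma = \sum_{z\in M^{S^1}} (i_z)_*\!\left( \frac{\gamma|_z}{e(z)} \right).
\]
Applying equivariant integration $\int_M$, which is the pushforward to $\mathrm{pt}$ and satisfies $\int_M \circ\, (i_z)_* = \mathrm{id}$ on $H^*_{S^1}(z)$, gives $\int_M \gamma = \sum_{z} \gamma|_z / e(z)$, which is exactly the asserted formula since $e(z) = \prod_{i=1}^n q_i(z)x$.

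The main obstacle is the localization statement of the first step: one must justify both that $H^*_{S^1}(M\setminus M^{S^1};\R)$ is $\R[x]$-torsion and that inverting $x$ turns $i^*$ into an isomorphism. This is the technical heart of the Atiyah--Bott theory, and the cleanest route is a Mayer--Vietoris reduction to a finite-isotropy model, where $H^*_{S^1}(X)\cong H^*(X/S^1)$ is annihilated by a power of $x$ with real coefficients. Once this is in place, the Euler-class computation and the projection-formula bookkeeping are routine; alternatively, the same identity can be obtained analytically in the Cartan model by a Stokes-type argument localizing the equivariant integral near the zeros of the generating vector field, an approach closer in spirit to the connection-form computations used earlier in this paper.
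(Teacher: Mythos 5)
This statement is the classical Atiyah--Bott--Berline--Vergne localization theorem, which the paper cites from \cite{AB} and \cite{BV} without giving any proof, so there is no argument in the paper to compare against. Your sketch is a correct outline of the standard Atiyah--Bott proof: torsionness of $H^*_{S^1}$ over $\R[x]$ off the fixed locus, the resulting isomorphism after inverting $x$, the self-intersection formula $(i_z)^*(i_z)_*=\cup\,e(z)$ with $e(z)=\prod_i\bigl(q_i(z)x\bigr)$ for isolated fixed points of an almost complex $S^1$-manifold, and functoriality of the pushforward to a point. The only points worth making explicit in a full write-up are (i) that $M$ is closed and the almost complex structure is $S^1$-invariant, so that the weights $q_i(z)$ are defined and nonzero and the equivariant Euler class is invertible in $\R(x)$, and (ii) that the identity, a priori valid only in the localized module $\R(x)$, descends to the claimed equality because $\int_M\gamma$ already lies in $\R[x]$; but these are standard and your sketch is sound.
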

Note that if we apply Theorem \ref{theorem_ABBV} to $\gamma = 1 \in H^0_{S^1}(M)$, then Corollary \ref{cor_localization_isolated} is straightforward. 
However, we give another proof of Corollary \ref{cor_localization_isolated} by using Theorem \ref{theorem-main} as we see below.
\begin{corollary}[Corollary \ref{cor_localization_isolated}]
	Let $(M,J)$ be a closed almost complex $S^1$-manifold. Suppose that the action preserves $J$ and that there are only isolated fixed points. 
	Then, 
	\[
		\sum_{z \in M^{S^1}} \frac{1}{\prod_{i=1}^n q_i(z)} = 0
	\] 
	where $q_1(z), \cdots, q_n(z)$ are the weights at $z$. 
\end{corollary}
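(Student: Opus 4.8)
The plan is to realize each weight product $\prod_i q_i(z)$ as the Chern number of a small invariant sphere around the corresponding fixed point, and then to kill the total by a global Stokes argument on an auxiliary even-dimensional filling. First I would form the $(2n+2)$-dimensional manifold $W = M \times D$, where $D \subset \C$ is the closed unit disk, equipped with the diagonal action $t \cdot (x,z) = (t\cdot x, tz)$. Since $tz = z$ for all $t$ forces $z=0$, the fixed point set of $W$ is exactly $M^{S^1} \times \{0\}$, which is finite because $M^{S^1}$ is discrete; moreover near a fixed point $(z,0)$ the action is linear with weights $(q_1(z), \dots, q_n(z), 1)$, the extra $1$ coming from the $D$-factor. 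The boundary $\partial W = M \times S^1$ carries the diagonal action, which is free. I would then excise a small invariant ball $B_z \cong D^{2n+2}$ around each fixed point and set $W' = W \setminus \bigsqcup_z \mathring{B}_z$, a compact fixed-point-free $S^1$-manifold whose boundary is $(M \times S^1) \sqcup \bigsqcup_z (-\partial B_z)$, where each $\partial B_z \cong S^{2n+1}$ carries the linear action of weights $(q_1(z), \dots, q_n(z), 1)$.

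The heart of the argument is a curvature/Stokes computation on $W'$. Extending Proposition \ref{prop_existence_connection_form} to manifolds with boundary, I would choose a normalized connection form $\alpha$ on the fixed-point-free manifold $W'$. Because the action is fixed-point-free, $d\alpha$ is basic, i.e. $d\alpha = q^*\Theta$ for a $2$-form $\Theta$ on the $(2n+1)$-dimensional quotient $W'/S^1$; hence $(d\alpha)^{n+1} = q^*(\Theta^{n+1}) = 0$ for degree reasons. Applying Stokes' theorem to $\alpha \wedge (d\alpha)^n$ then yields
\[
0 = \int_{W'} (d\alpha)^{n+1} = \int_{\partial W'} \alpha \wedge (d\alpha)^n = \int_{M \times S^1} \alpha \wedge (d\alpha)^n - \sum_{z \in M^{S^1}} \int_{\partial B_z} \alpha \wedge (d\alpha)^n.
\]
It remains to evaluate the two kinds of boundary terms, each of which is a Chern number and hence independent of the connection by Lemma \ref{lemma_not_depending_on_the_choice_of_connection}. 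For the sphere terms, Proposition \ref{prop_Chern_number_sphere} gives $\int_{\partial B_z} \alpha \wedge (d\alpha)^n = 1/(q_1(z)\cdots q_n(z)\cdot 1) = 1/\prod_i q_i(z)$. For the term over $M \times S^1$, the principal bundle $M \times S^1 \to M$ is trivial (it admits the section $x \mapsto (x,1)$), so its Chern number vanishes; equivalently this is the statement $e(M \times S^1, \psi) = 0$ of Proposition \ref{prop_euler_number_zero_product_case}, realized here by the product connection $\pi_2^* d\theta$ whose curvature already vanishes. Substituting both evaluations gives $\sum_{z \in M^{S^1}} 1/\prod_i q_i(z) = 0$, as desired.

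The main obstacle I anticipate is bookkeeping rather than conceptual: I must check that excising the fixed points leaves a manifold on which a connection form exists despite possible non-isolated finite isotropy (this is fine, since connection forms exist on any fixed-point-free $S^1$-manifold, with or without finite stabilizers), and I must track orientations so that all the sphere boundaries enter $\partial W'$ with a common sign relative to $M \times S^1$. Since the conclusion is the vanishing of a sum, any uniform overall sign is harmless; the only genuine point to verify is that the orientation of each $\partial B_z$ is the complex one matching the convention of Proposition \ref{prop_Chern_number_sphere}, so that the residues appear with the correct value $1/\prod_i q_i(z)$.
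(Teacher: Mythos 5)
Your proof is correct, but it takes a genuinely different route from the paper's. The paper proves this corollary as an application of its main theorem: it fixes a large prime $p$, equips $M\times S^1$ with the action $t\cdot(x,w)=(t\cdot x,t^pw)$ so that each isolated fixed point becomes an exceptional orbit of type $(p;q_1(z),\dots,q_n(z))$, and then combines Theorem \ref{theorem-main} with Proposition \ref{prop_euler_number_zero_product_case} to get $\sum_z q_1(z)^{-1}\cdots q_n(z)^{-1}\equiv 0$ in $\Z_p$ for every sufficiently large prime $p$, from which the vanishing of the rational number follows by contradiction. You instead bypass Theorem \ref{theorem-main} entirely: filling by $M\times D$, excising linearized balls around the fixed points, and running Stokes on $\alpha\wedge(d\alpha)^n$ over the fixed-point-free complement reduces everything to the sphere computation of Proposition \ref{prop_Chern_number_sphere} and the triviality of $M\times S^1\to M$. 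This is essentially the classical residue proof of degree-zero ABBV localization, and it has real advantages here: it is more elementary and self-contained, and it yields the identity directly over $\R$ rather than as a congruence modulo infinitely many primes. What it does not do is illustrate the main theorem, which is the paper's point in placing the corollary where it does; it is also close in spirit to the paper's own Lemma \ref{lem_extend_one_exceptional_orbit}, which likewise passes to $M\times D$ and resolves. The steps you flag as bookkeeping are indeed the only ones needing care, and they all go through: the restriction of a global normalized connection form to an invariant boundary component is again a normalized connection form, its Chern number over each closed boundary component is connection-independent by Lemma \ref{lemma_not_depending_on_the_choice_of_connection}, $(d\alpha)^{n+1}=0$ follows already from $i_{\underline{X}}(d\alpha)^{n+1}=0$ with $\underline{X}$ nowhere vanishing (no need to descend to the quotient), and the almost complex structure orients each $\partial B_z$ compatibly with the convention of Proposition \ref{prop_Chern_number_sphere}.
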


\begin{proof}
Let $p$ be an arbitrarily large prime number such that $p>q_i(z)$ for every $z \in M^{S^1}$ and $i=1,2,\cdots, n$. 
Suppose that 
\begin{equation}\label{equation_assumption}
\frac{a}{b} = \sum_{z \in M^{S^1}} \frac{1}{\prod_{i=1}^n q_i(z)} \neq 0
\end{equation}
 for some integers $a$ and $b$. Then 
\[
ab^{-1} = 	\sum_{z \in M^{S^1}} q_1(z)^{-1}q_2(z)^{-1} \cdots q_n(z)^{-1} \neq 0 ~\text{in} ~\Z_p
\]
by the assumption, where $q_i(z)^{-1}$ and $b^{-1}$ are the inverses of $q_i(z)$ and $b$ in $\Z_p^{\times}$, respectively, for every $i=1,2,\cdots, n$.

Let us consider the product space $M \times S^1$ with an $S^1$-action $\psi$ given by 
\[
t \cdot (x, w) = (t \cdot x, t^pw)
\]
for $t \in S^1$ and $(x,w) \in M \times S^1$. 
Then $( M\times S^1, \psi)$ is a pseudo-free $S^1$-manifold such that the set $\mathcal{E}$ of exceptional orbit is 
\[
	\mathcal{E} = \left\{ \{z\} \times S^1 \subset M \times S^1 ~|~ z \in M^{S^1}   \right\}.
\]
For each exceptional orbit $\{z\}\times S^1$, the local invariant is given by $(p; q_1(z), q_2(z), \cdots, q_n(z))$.
By Theorem \ref{theorem-main} and Proposition \ref{prop_section_5_1}, we have 
\[
	\sum_{z \in M^{S^1}} \frac{q_1(z)^{-1}q_2(z)^{-1} \cdots q_n(z)^{-1}}{p} \equiv 0 \modz.
\]
which is equivalent to 
\[
	\sum_{z \in M^{S^1}} q_1(z)^{-1}q_2(z)^{-1} \cdots q_n(z)^{-1} = 0 ~\text{in} ~\Z_p.
\]
which leads a contradiction.
\end{proof}

The other type of manifolds having $e = 0$ comes from equivariant symplectic geometry. Here we give a brief introduction to the theory of circle actions on 
symplectic manifolds. 

Let $M$ be a $2n$-dimensional closed manifold. 
A differential 2-form $\omega$ on $M$ is called a {\em symplectic form} if $\omega$ is closed and
non-degenerate, i.e.,
\begin{itemize}
    \item $d\omega = 0$, and
    \item $\omega^n$ is nowhere vanishing.
\end{itemize}
We call such a pair $(M,\omega)$ a {\em symplectic manifold}.
A smooth $S^1$-action on $(M,\omega)$ is called {\em symplectic} if it preserves $\omega$. 
Equivalently, an $S^1$-action is symplectic if $\mathcal{L}_{\underline{X}} \omega = di_{\underline{X}}\omega = 0$ where $\underline{X}$ is the fundamental vector field on $M$ generated by the action.
Thus if the action is symplectic, then $i_{\underline{X}}\omega$ is a closed 1-form so that it represents some cohomology class $[i_{\underline{X}}\omega] \in H^1(M;\R)$.
Now, let us assume that $\omega$ is integral so that $[\omega] \in H^2(M;\Z)$. By a direct computation, we can easily check that $i_{\underline{X}}\omega$ is also integral. Thus we can
define a smooth map $\mu : M \rightarrow \R / \Z\cong S^1$ such that
\[
\mu(x) := \int_{\gamma_x}
i_{\underline{X}}\omega ~\mod \Z
\]
where $x_0$ is a base point and $\gamma_x$ is any path $\gamma_x : [0,1] \rightarrow M$ such that $\gamma_x(0) = x_0$ and $\gamma_x(1) = x$.
We call $\mu$ a {\em generalized moment map}.

\begin{lemma}\cite{McD}\cite[Proposition 2.2]{CKS}\label{lemma_dmu}
	Let $\mu$ be a generalized moment map. Then 
	\[
	d\mu = i_{\underline{X}}\omega.
	\]
\end{lemma}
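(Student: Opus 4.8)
The plan is to reduce the identity to the fundamental theorem of calculus, once the mod-$\Z$ ambiguity in the definition of $\mu$ has been isolated. Set $\beta := i_{\underline{X}}\omega$. This is a closed $1$-form, since the action preserves $\omega$ and hence $\mathcal{L}_{\underline{X}}\omega = d\,i_{\underline{X}}\omega = 0$; moreover it is integral because $[\omega]\in H^2(M;\Z)$, as observed just before the definition of $\mu$. The first thing I would record is that this is precisely what makes $\mu$ well-defined as a map to $\R/\Z$: any two paths $\gamma_x,\gamma_x'$ from $x_0$ to $x$ differ by a loop $\ell$, and $\int_{\gamma_x}\beta-\int_{\gamma_x'}\beta=\int_{\ell}\beta\in\Z$ since $\beta$ is closed and integral, so the two values agree in $\R/\Z$. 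Thus $\mu(x)$ depends only on $x$, and I may use whichever path is convenient when computing.

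To compute $d\mu$ at a point $x$ in the direction of a tangent vector $v\in T_xM$, I would choose a smooth curve $c:(-\epsilon,\epsilon)\to M$ with $c(0)=x$ and $c'(0)=v$, and evaluate $(d\mu)_x(v)=\frac{d}{ds}\big|_{s=0}\mu(c(s))$, using the canonical identification $T(\R/\Z)\cong\R$. The key move is to exploit path-independence: as the path from $x_0$ to $c(s)$ I would take the concatenation of a fixed path $\gamma_x$ from $x_0$ to $x$ with the restricted curve $c|_{[0,s]}$ from $x$ to $c(s)$. This gives
\[
\mu(c(s)) \equiv \int_{\gamma_x}\beta + \int_0^s \beta_{c(u)}(c'(u))\,du \modz,
\]
in which the first summand is a constant independent of $s$.

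It then remains only to differentiate. For $s$ near $0$ the right-hand side is a continuous real-valued expression whose reduction mod $\Z$ equals $\mu(c(s))$, so it furnishes a local lift of $\mu\circ c$ through the covering $\R\to\R/\Z$; hence the derivative of $\mu\circ c$ may be read off from it directly. The constant term drops out, and the fundamental theorem of calculus yields $\frac{d}{ds}\big|_{s=0}\int_0^s\beta_{c(u)}(c'(u))\,du=\beta_x(v)$. Therefore $(d\mu)_x(v)=\beta_x(v)=(i_{\underline{X}}\omega)_x(v)$, and since $x$ and $v$ are arbitrary we conclude $d\mu=i_{\underline{X}}\omega$.

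I do not expect a genuine obstacle: the argument is essentially a repackaging of the fundamental theorem of calculus. The only point demanding care is the bookkeeping of the mod-$\Z$ reduction, namely justifying that $\mu\circ c$ admits a smooth real lift near $s=0$ so that differentiation commutes with the quotient $\R\to\R/\Z$; this is immediate once the integrality of $\beta$ has been used to secure well-definedness.
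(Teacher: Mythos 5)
Your argument is correct. Note that the paper itself gives no proof of this lemma---it is quoted from \cite{McD} and \cite[Proposition 2.2]{CKS}---so there is nothing internal to compare against; what you have written is the standard argument, and it is sound: closedness of $i_{\underline{X}}\omega$ follows from Cartan's formula together with $\mathcal{L}_{\underline{X}}\omega=0$ and $d\omega=0$, integrality (which you correctly defer to the paper's remark preceding the definition of $\mu$) gives well-definedness of $\mu$ in $\R/\Z$, and path-independence plus the fundamental theorem of calculus, applied to the local real lift of $\mu\circ c$, yields $d\mu=i_{\underline{X}}\omega$. The one point you flag yourself---that the quotient $\R\to\R/\Z$ is a covering so that a continuous local lift exists and differentiation commutes with the projection---is exactly the right point to make explicit, and your treatment of it is adequate.
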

It is immediate consequences of Lemma \ref{lemma_dmu} that $\mu$ is $S^1$-invariant and the set of critical points of $\mu$ is equal to $M^{S^1}$.
Let $\theta \in \R / \Z$ be a regular value of $\mu$. Then $F_\theta := \mu^{-1}(\theta)$ is a $(2n-1)$-dimensional closed fixed-point-free $S^1$-manifold. 
Note that the restriction $\omega|_{F_\theta}$ has one-dimensional kernel generated by $\underline{X}$ on $F_\theta$. Thus $\omega|_{F_\theta}$ induces a symplectic structure
$\omega_\theta$ on the quotient $M_\theta := F_\theta / S^1$ and we call $(M_\theta, \omega_\theta)$ the {\em symplectic reduction at $\theta$}. 
If we choose $\epsilon > 0$ small enough so that $I_\theta := (\theta - \epsilon, \theta + \epsilon) \subset \R / \Z$ has no critical value, then 
\[
\mu^{-1}(I_\theta) \cong M_\theta \times I_\theta. 
\]
Thus we can compare $[\omega_{\vartheta}]$ with $[\omega_\theta]$ in $H^2(M;\R)$ whenever $\vartheta \in I_\theta$. 
The following theorem due to Duistermaat and Heckman gives an explicit variation formula of reduced symplectic forms. 
\begin{theorem}\cite{DH}\label{theorem_DH}
	Let $\psi_\theta$ be the induced $S^1$-action on $F_\theta$. Then 
	\[
	[\omega_{\vartheta}] - [\omega_\theta] = (\vartheta-\theta) \cdot c_1(F_\theta, \psi_\theta)
	\]
	for every $\vartheta \in I_\theta$.
\end{theorem}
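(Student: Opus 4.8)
The plan is to prove the formula by differentiating the family $\vartheta\mapsto[\omega_\vartheta]$ and showing that its derivative is the constant class $c_1(F_\theta,\psi_\theta)$, so that integration yields the asserted linear dependence. Since $I_\theta$ contains no critical value of $\mu$, the restriction $\mu\colon\mu^{-1}(I_\theta)\to I_\theta$ is a fiber bundle with fiber $F_\theta$, and (after passing to a real-valued lift of $\mu$ on the interval $I_\theta$, which is harmless) I would choose an $S^1$-invariant vector field $Y$ on $\mu^{-1}(I_\theta)$ with $d\mu(Y)=1$; such a $Y$ exists by averaging any transverse field over $S^1$, using that $d\mu$ is invariant. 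Let $\rho_s$ be the flow of $Y$. Then $\mu(\rho_s(x))=\mu(x)+s$, so $\rho_s$ carries $F_\theta$ onto $F_{\theta+s}$ and, because $Y$ is $S^1$-invariant, it does so equivariantly; hence it descends to diffeomorphisms $\bar\rho_s\colon M_\theta\to M_{\theta+s}$ compatible with the quotient maps $\pi$. These are precisely the identifications used to compare the classes $[\omega_\vartheta]$ inside $H^2(M_\theta;\R)$.

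The heart of the argument is a short Cartan-calculus computation. Since $d\omega=0$, I have
\[
\frac{d}{ds}\Big|_{s=0}\rho_s^*\omega=\mathcal{L}_Y\omega=d\,i_Y\omega.
\]
Set $\alpha:=-i_Y\omega$. Using $i_{\underline{X}}\omega=d\mu$ (Lemma \ref{lemma_dmu}) and the invariance of $\mu$, one checks $\alpha(\underline{X})=-(i_Y\omega)(\underline{X})=\omega(\underline{X},Y)=d\mu(Y)=1$, while $\mathcal{L}_{\underline{X}}\alpha=-i_{[\underline{X},Y]}\omega=0$ because $[\underline{X},Y]=0$. Thus, letting $\iota_\theta\colon F_\theta\hookrightarrow M$ denote the inclusion, the restriction $\alpha_\theta:=\iota_\theta^*\alpha$ is a normalized connection form on $F_\theta$ in the sense of Proposition \ref{prop_existence_connection_form}, and $d\alpha_\theta=\pi^*\Theta_{\alpha_\theta}$ with $[\Theta_{\alpha_\theta}]=c_1(F_\theta,\psi_\theta)$ by Lemma \ref{lemma_not_depending_on_the_choice_of_connection}. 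On the other hand, since $\iota_{\theta+s}^*\omega=\pi^*\omega_{\theta+s}$ and $\rho_s$ descends to $\bar\rho_s$, the restriction $\iota_\theta^*\rho_s^*\omega$ equals $\pi^*(\bar\rho_s^*\omega_{\theta+s})$. Differentiating at $s=0$ and substituting $d\,i_Y\omega=-d\alpha$ gives
\[
\pi^*\frac{d}{ds}\Big|_{s=0}\big(\bar\rho_s^*\omega_{\theta+s}\big)=-\,\iota_\theta^*d\alpha=-\,\pi^*\Theta_{\alpha_\theta}.
\]
Since $\pi^*$ is injective on forms, I conclude that $\frac{d}{d\vartheta}\big|_{\vartheta=\theta}[\omega_\vartheta]=\pm\,c_1(F_\theta,\psi_\theta)$, the sign being determined by the orientation conventions for $\mu$ and for $c_1$ fixed in Section 3.

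To upgrade this infinitesimal identity to the exact formula, I would note that the same computation applies at every $\vartheta\in I_\theta$, yielding $\frac{d}{d\vartheta}[\omega_\vartheta]=\pm\,c_1(F_\vartheta,\psi_\vartheta)$. Because all the level sets $F_\vartheta$ with $\vartheta\in I_\theta$ are $S^1$-equivariantly diffeomorphic via the flow $\rho_s$, their first Chern classes agree under the identifications $\bar\rho_s$; hence $\frac{d}{d\vartheta}[\omega_\vartheta]$ is a single constant class, and integrating from $\theta$ to $\vartheta$ gives $[\omega_\vartheta]-[\omega_\theta]=(\vartheta-\theta)\,c_1(F_\theta,\psi_\theta)$.

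I expect the main obstacle to be the orbifold bookkeeping rather than the differential-geometric identity: one must justify that the connection form $\alpha_\theta$, the descent relation $\iota_\vartheta^*\omega=\pi^*\omega_\vartheta$, the injectivity of $\pi^*$, and the flow-invariance of $c_1(F_\vartheta,\psi_\vartheta)$ all remain valid when $M_\theta$ is only an orbifold (owing to the fixed-point-free but non-free locus), for which I would lean on the framework developed in Section 3. A secondary, purely routine, point is pinning down the sign so that it matches the stated convention.
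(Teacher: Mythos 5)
The paper does not prove this statement at all: it is quoted from Duistermaat--Heckman \cite{DH} as a known result, so there is no internal proof to compare against. Your argument is the standard proof of the variation formula (essentially the one in \cite{GS} or in McDuff--Salamon), and it is correct in all essentials: the averaged invariant vector field $Y$ with $d\mu(Y)=1$ exists because $F_\theta$ is compact and $d\mu$ is invariant and nowhere zero on $\mu^{-1}(I_\theta)$; the flow identifications $\bar\rho_s$ are exactly the ones implicit in the paper's trivialization $\mu^{-1}(I_\theta)\cong M_\theta\times I_\theta$; the Cartan computation showing that $-\iota_\theta^* i_Y\omega$ is a normalized connection form (it is invariant since $[\underline{X},Y]=0$, and pairs to $1$ with $\underline{X}$ via $\omega(\underline{X},Y)=d\mu(Y)$) is right; and injectivity of $\pi^*$ on basic forms, together with the flow-invariance of $c_1(F_\vartheta,\psi_\vartheta)$ guaranteed by Lemma \ref{lemma_not_depending_on_the_choice_of_connection}, legitimately upgrades the pointwise derivative to the linear formula. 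The orbifold caveats you raise are genuine but are exactly the ones Section 3 of the paper is set up to handle.

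The one place you should not hedge is the sign. Your computation determines it: with the paper's conventions $d\mu=i_{\underline{X}}\omega$ and $c_1(F_\theta,\psi_\theta)=[\Theta_{\alpha_\theta}]$ for a connection form normalized by $\alpha_\theta(\underline{X})=1$, you get
\[
\frac{d}{ds}\Big|_{s=0}\pi^*\bigl(\bar\rho_s^*\omega_{\theta+s}\bigr)=\iota_\theta^*\,d\,i_Y\omega=-\,d\alpha_\theta=-\,\pi^*\Theta_{\alpha_\theta},
\]
hence $[\omega_\vartheta]-[\omega_\theta]=-(\vartheta-\theta)\,c_1(F_\theta,\psi_\theta)$. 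A direct check on $S^3_r\subset\C^2$ with the standard diagonal action (where $\mu=-\pi|z|^2$, the reduced area at level $\theta$ is $-\theta$, and $\langle c_1,[\CP^1]\rangle=1$ by Proposition \ref{prop_Chern_number_sphere}) confirms the minus sign. So your argument in fact shows that the theorem as printed carries the opposite sign relative to the paper's own conventions; this is a conventions mismatch in the quoted statement rather than a flaw in your proof, and it is immaterial for the paper's application (Proposition \ref{prop_euler_number_zero_symplectic_case} only uses that the leading Duistermaat--Heckman coefficient is $\pm\langle c_1^{\,n-1},[M_\theta]\rangle$). Commit to the sign your computation gives and note the discrepancy, rather than leaving a $\pm$.
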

Now, we can define a function, called the {\em Duistermaat-Heckman function}, on $I_\theta$ such that 
\begin{align*}
	\text{DH} : & ~I_\theta \rightarrow \R \\
	         &  ~\vartheta   \mapsto \text{Vol}(M_{\vartheta}, \omega_{\vartheta})
\end{align*}
where $\text{Vol}(M_{\vartheta}, \omega_{\vartheta})$ is a symplectic volume given by 
\[
	\text{Vol}(M_\vartheta, \omega_\vartheta) = \int_{M_\vartheta} \omega_\vartheta^{n-1}.
\]
By Theorem \ref{theorem_DH}, the Duistermaat-Heckman function $\text{DH}(\vartheta)$ is a locally polynomial of degree $n-1$ with the leading coefficient 
$\langle c_1(F_\theta, \psi_\theta)^{n-1}, [M_\theta] \rangle$. In other words, 
\begin{align*}
	\text{DH}(\vartheta) & = \left( \int_{M_\theta} c_1(F_\theta, \psi_\theta)^{n-1} \right) (\vartheta-\theta)^{n-1} + \cdots +  \int_{M_\theta} \omega_\theta^{n-1}\\
	                      & = \left( \int_{M_\theta} c_1(F_\theta, \psi_\theta)^{n-1} \right) \vartheta^{n-1} + \cdots.
\end{align*}

\begin{proposition}[Proposition~\ref{prop_euler_number_zero_symplectic_case}]
	Let $(M,\omega)$ be a closed symplectic manifold equipped with a fixed-point-free $S^1$-action $\psi$ preserving $\omega$. 
	Let $\mu : M \rightarrow \R/\Z$ be a generalized moment map and let $F_\theta = \mu^{-1}(\theta)$ for $\theta\in \R/\Z$.
	Then $e(F_\theta, \psi|_{F_\theta}) = 0$. 
\end{proposition}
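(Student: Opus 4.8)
The plan is to identify $e(F_\theta, \psi|_{F_\theta})$ with the leading coefficient of the Duistermaat--Heckman function and then to exploit the fact that this function lives on the \emph{circle} $\R/\Z$. Write $\psi_\theta := \psi|_{F_\theta}$ for the induced action. Since $F_\theta$ is $(2n-1)$-dimensional, its reduced space $M_\theta = F_\theta/S^1$ has dimension $2(n-1)$, so by definition
\[
e(F_\theta, \psi_\theta) = \langle c_1(F_\theta, \psi_\theta)^{n-1}, [M_\theta]\rangle \modz.
\]
On the other hand, the computation recorded just above the proposition shows that $\DHfunction(\vartheta) = \int_{M_\vartheta} \omega_\vartheta^{n-1}$ is, on each interval of regular values, a polynomial of degree $\le n-1$ whose leading coefficient is exactly $\langle c_1(F_\theta,\psi_\theta)^{n-1}, [M_\theta]\rangle$. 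Thus $e(F_\theta, \psi_\theta)$ is precisely this leading coefficient reduced modulo $\Z$, and it suffices to control that coefficient globally.

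The crucial observation I would use is that the fixed-point-free hypothesis removes \emph{all} critical values of $\mu$: indeed $\Crit(\mu) = M^{S^1} = \emptyset$ by Lemma~\ref{lemma_dmu}, so every $\vartheta \in \R/\Z$ is regular and $\mu$ is a fiber bundle over the entire circle. Consequently the family of reduced spaces $\{(M_\vartheta, \omega_\vartheta)\}$ is defined and varies smoothly over all of $\R/\Z$, and $\DHfunction$ is a globally smooth function on $\R/\Z$ that is locally polynomial of degree $\le n-1$ by Theorem~\ref{theorem_DH}.

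Next I would pass to the lift $\tilde{\DHfunction} : \R \to \R$, a smooth $1$-periodic function. Covering $\R$ by the intervals on which $\tilde{\DHfunction}$ agrees with a polynomial of degree $\le n-1$, one sees that its $n$-th derivative vanishes near every point, hence $\tilde{\DHfunction}^{(n)} \equiv 0$ on all of $\R$; therefore $\tilde{\DHfunction}$ is a single global polynomial of degree $\le n-1$. A polynomial that is $1$-periodic must be constant, so for $n \ge 2$ the coefficient of $\vartheta^{n-1}$ vanishes, that is $\langle c_1(F_\theta,\psi_\theta)^{n-1},[M_\theta]\rangle = 0$, and hence $e(F_\theta,\psi_\theta) = 0$. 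When $n = 1$ the function $\DHfunction$ is automatically constant, equal to the signed count $\langle 1, [M_\theta]\rangle$ of points in the $0$-dimensional reduced space, which is an integer, so again $e(F_\theta, \psi_\theta) \equiv 0 \modz$.

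The step I expect to require the most care is the globalization: making precise that $\DHfunction$ is genuinely smooth (not merely piecewise polynomial) across the whole circle, so that the vanishing of the $n$-th derivative is legitimate, and confirming that the leading coefficient --- equivalently the Chern number of the reduced space --- stays constant along $\R/\Z$. This is exactly where connectedness of the circle and the absence of wall-crossing, both guaranteed by fixed-point-freeness, enter. Once the periodicity of a bona fide global polynomial is invoked, the conclusion $e(F_\theta,\psi_\theta)=0$ follows at once.
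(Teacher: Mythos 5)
Your proposal is correct and follows essentially the same route as the paper: both identify $e(F_\theta,\psi_\theta)$ with the leading coefficient of the Duistermaat--Heckman function, use fixed-point-freeness to conclude that $\DHfunction$ is a globally defined locally-polynomial function on all of $\R/\Z$, and then invoke the fact that a periodic polynomial is constant to kill the top coefficient. Your extra care in globalizing (vanishing of the $n$-th derivative of the lift) and the separate treatment of $n=1$ only make explicit what the paper leaves implicit.
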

\begin{proof}
Since the $S^1$-action $\psi$ on $(M,\omega)$ is fixed-point-free by assumption, the Duistermaat-Heckman function $\text{DH}$ is a polynomial defined on the whole $\R / \Z$. 
Since any periodic polynomial is a constant function, all coefficients of $\vartheta^{n-i}$ in $\text{DH}(\vartheta)$ are zero for $1\le i<n$. Indeed, the coefficient of $\vartheta^{n-i}$ can be expressed as 
\[
\sum_{j=1}^i \binom{n-j}{i-j}(-\theta)^{i-j}\int_{M_\theta} c_1(F_\theta, \psi_\theta)^{n-j}\omega^{j-1}
\]
for every $i=1,\dots, n$. In particular, we have $e(F_\theta, \psi_\theta) = 0$ when $i=1$. 
\end{proof}

Furthermore, we have the following corollary.

\begin{corollary}\label{corollary_DHfunction}
	Let $(M,\omega)$ be a $(2n+2)$-dimensional closed symplectic manifold with a fixed-point-free symplectic $S^1$-action $\psi$. 
	Assume that $[\omega] \in H^2(M; \Z)$ and every submanifold fixed by some non-trivial finite subgroup of $S^1$ is of dimension two. 
	Then we have 
	\[
	\sum_{S \in \mathcal{J}} \omega(S) \cdot \frac{q_1^{-1}(S)q_2^{-1}(S) \cdots q_n^{-1}(S)}{p(S)} \equiv 0 \modz
	\]
	where $\mathcal{J}$ is the set of connected submanifolds of $M$ having non-trivial isotropy subgroups, 
	$\omega(S)$ is the symplectic area of $S$, $p(S)$ is the order of the isotropy subgroup of $S$, 
	$(q_1(S),\cdots,q_n(S))$ is the weight-vector of $\Z_{p(S)}$-representation on the normal bundle over $S$, and $q_i(S)^{-1}$ is the inverse of $q_i(S)$ in $\Z_{p(S)}^{\times}$ for every $i=1,\cdots, n$.
\end{corollary}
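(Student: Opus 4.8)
The plan is to deduce the statement from Theorem~\ref{theorem-main} by manufacturing, out of the symplectic data, a single auxiliary $(2n+1)$-dimensional pseudo-free $S^1$-manifold whose exceptional orbits reproduce exactly the weighted sum on the left-hand side. First I would record the local structure. Each connected $S\in\mathcal{J}$ is the fixed locus of the finite subgroup $\Z_{p(S)}\subset S^1$ acting by symplectomorphisms, hence a symplectic submanifold; since $\dim S=2$ by hypothesis, a point of $S$ with strictly larger finite stabilizer would lie in a fixed locus of positive codimension, contradicting the two-dimensionality assumption, so $S^1/\Z_{p(S)}$ acts freely on the closed surface $S$ and the $\Z_{p(S)}$-representation on the normal bundle $\nu(S)$ carries a well-defined weight vector $\vec q(S)\in(\Z_{p(S)}^\times)^n$. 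Writing $B=M/S^1$, $q\colon M\to B$, and $\gamma_S:=q(S)$, the curves $\gamma_S$ are precisely the (one-dimensional) singular locus of $B$, and $q^{-1}(\gamma_S)=S$. The only homological input I will need about the Euler class $c_1(M,\psi)=[\Theta_\alpha]\in H^2(B;\R)$ is the elementary identity $q^*c_1(M,\psi)=[d\alpha]=0$ in $H^2(M;\R)$.

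The key construction is to use $[\omega]$ to build the right class on $B$. Let $\kappa:=q_*[\omega]\in H^1(B;\Z)$ be the fibre integral of the integral class $[\omega]$ along the circle fibration $q$ (this is integral because the singular set has codimension $2n\ge 2$). Since $q^{-1}(\gamma_S)=S$, fibre-integration adjunction gives
\[
\langle\kappa,[\gamma_S]\rangle=\langle[\omega],[S]\rangle=\omega(S).
\]
Note that this is exactly the gain over using the moment-map fibre $\mu^{-1}(\theta)$: the class $\kappa$ winds $p(S)$ times slower along $\gamma_S$, so it will produce $\omega(S)$ orbits per surface rather than $p(S)\,\omega(S)$. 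Now represent $\kappa$ by a smooth $\bar\nu\colon B\to\R/\Z$, arranged to restrict to a monotone covering of degree $\omega(S)$ on each $\gamma_S$, choose a regular value $\theta$ with $B_\theta:=\bar\nu^{-1}(\theta)$ transverse to every $\gamma_S$, and set $\hat M_\theta:=q^{-1}(B_\theta)=(\bar\nu\circ q)^{-1}(\theta)$. Then $\hat M_\theta$ is a closed oriented $(2n+1)$-dimensional $S^1$-manifold whose non-free orbits are exactly $q^{-1}\!\bigl(B_\theta\cap\bigcup_{S}\gamma_S\bigr)$, a finite set, so $(\hat M_\theta,\psi|_{\hat M_\theta})$ is pseudo-free. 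A slice computation at each of the $|\omega(S)|$ transverse points of $B_\theta\cap\gamma_S$ shows, using transversality of $\hat M_\theta$ to $S$, that the normal slice of the corresponding orbit is $S^1$-equivariantly isomorphic to $\nu(S)$, so the orbit is of type $(p(S);\vec q(S))$.

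It then remains to evaluate $e(\hat M_\theta)$ in two ways. On one hand, Theorem~\ref{theorem-main} sums the local contributions of the orbits just described, giving
\[
e(\hat M_\theta)\equiv\sum_{S\in\mathcal{J}}\omega(S)\,\frac{q_1^{-1}(S)q_2^{-1}(S)\cdots q_n^{-1}(S)}{p(S)}\modz.
\]
On the other hand, $c_1(\hat M_\theta)$ is the restriction of $c_1(M,\psi)$ along $B_\theta\hookrightarrow B$, and $[B_\theta]$ is Poincaré dual to $\kappa$, so the projection formula for $q$ yields
\[
e(\hat M_\theta)=\langle c_1(\hat M_\theta)^n,[B_\theta]\rangle=\langle c_1(M,\psi)^n\cup\kappa,[B]\rangle=\langle (q^*c_1(M,\psi))^n\cup[\omega],[M]\rangle=0,
\]
since $q^*c_1(M,\psi)=0$ and $n\ge 1$. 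Comparing the two evaluations yields the claimed congruence.

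The routine points are the integrality of $\kappa$ on the $\gamma_S$ (guaranteed by $[\omega]\in H^2(M;\Z)$) and the existence of a representative $\bar\nu$ that is monotone along each $\gamma_S$ and simultaneously transverse to all of them. The main obstacle, and the step I expect to need the most care, is the orientation bookkeeping: with $S$ oriented by $\omega$ and $B_\theta$ co-oriented by $d\theta$, I must verify that every intersection point of $B_\theta$ with a given $\gamma_S$ enters with the same sign and with weight vector exactly $\vec q(S)$ (not a sign-conjugate of it), so that the signed count per surface is precisely $\omega(S)$ and the local terms assemble into the stated sum with consistent signs across the different surfaces. It is worth noting that, because the target identity is only asserted modulo $\Z$ and the second computation already forces $e(\hat M_\theta)=0$, a single global sign is harmless; only the \emph{relative} consistency of the signs among the surfaces $S\in\mathcal{J}$ genuinely has to be checked.
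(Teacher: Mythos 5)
Your key step is the identity $\langle\kappa,[\gamma_S]\rangle=\omega(S)$ for $\kappa=q_*[\omega]$, and this is exactly where the argument breaks. The adjunction for fibre integration reads $\langle q_*[\omega],[\gamma_S]\rangle=\langle[\omega],q^![\gamma_S]\rangle$, and the transfer $q^![\gamma_S]$ must be computed by pushing $\gamma_S$ into the free locus and taking the honest preimage torus $T'$. A free orbit near $S$ is a $(p(S);\vec{q}(S))$--torus knot that winds $p(S)$ times around the exceptional orbit it degenerates to, so $[T']=p(S)\,[S]$ in $H_2(M;\Z)$ and hence $\langle\kappa,[\gamma_S]\rangle=p(S)\,\omega(S)$, not $\omega(S)$. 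Your hypersurface $\hat M_\theta$ therefore meets $S$ in $p(S)\,\omega(S)$ exceptional orbits --- the very count you were trying to avoid --- and comparing the two evaluations of $e(\hat M_\theta)$ only yields the trivial congruence $\sum_S\omega(S)\,q_1^{-1}(S)\cdots q_n^{-1}(S)\equiv 0 \modz$. Worse, a class on $B$ with the periods you actually need can fail to exist. Take $M=(T^2\times\Sigma)/\Z_4$ with $T^2=(\R/\Z)^2$, $\Sigma=\C/(\Z+i\Z)$, the generator of $\Z_4$ acting freely by $(x,y,z)\mapsto(x+\tfrac14,y,iz)$, the $S^1$-action by translation in $x$, and $\omega=4\,dx\wedge dy+\omega_\Sigma$; then $[\omega]$ is integral (the classes of $S_0=(T^2\times\{0\})/\Z_4$ and $\{\mathrm{pt}\}\times\Sigma$ span a hyperbolic summand of $H_2(M;\Z)$ and have period $1$ each), the three exceptional tori have $(p(S),\omega(S))$ equal to $(4,1),(4,1),(2,2)$, the quotient is $B\cong S^1\times S^2$, and the three singular circles are all homologous, so every $\bar\nu\colon B\to\R/\Z$ has the same degree on all of them while the required degrees $1,1,2$ differ. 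In that example the asserted sum is $\tfrac{\pm1}{4}+\tfrac{\pm1}{4}+1\equiv\tfrac12 \modz$, so the difficulty is not only in your proof but in the statement itself.

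For comparison, the paper slices by the generalized moment map instead: it sets $F_\theta=\mu^{-1}(\theta)$, gets $e(F_\theta,\psi_\theta)=0$ from Proposition \ref{prop_euler_number_zero_symplectic_case}, and applies Theorem \ref{theorem-main} after asserting that $F_\theta\cap S_i$ consists of $n_i=\omega(S_i)$ orbits via $\int_{S_i}\omega=\int_{\gamma_i}i_{\underline{X}}\omega=n_i$. The first equality there carries the same missing factor: sweeping a section $\gamma_i$ of $S_i\to S_i/S^1$ by the $S^1$-action covers $S_i$ with degree $p(S_i)$, so $\int_{\gamma_i}i_{\underline{X}}\omega=p(S_i)\int_{S_i}\omega$ and $n_i=p(S_i)\,\omega(S_i)$. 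So you correctly diagnosed the factor-of-$p(S)$ problem in the level-set count that the paper overlooks, but replacing $\mu$ by a representative of $q_*[\omega]$ does not remove it, and the example above shows that no choice of $S^1$-invariant hypersurface can.
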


\begin{proof}
	Let $\mu : M \rightarrow \R / \Z$ be a generalized moment map for $\psi$. 
	Without loss of generality, by scailing $\omega$ if necessary, 
	we may assume that $\mu^* dt = i_{\underline{X}}\omega = d\mu$ where $dt$ is a volume form on $\R / \Z$ such that 
	$\int_{\R / \Z} dt = 1$ and $\underline{X}$ is the vector field generated by the $S^1$-action $\psi$, see \cite[p. 273]{Au} for more details.
	Since the action is fixed-point-free, there is no critical point of $\mu$. 
	Let $\theta \in \R / \Z$ and we denote $\psi_\theta$ the induced action on $F_\theta = \mu^{-1}(\theta)$. 
	
	Let $\mathcal{J} = \{S_1, \cdots, S_k\}$ be the set of connected symplectic submanifolds of $(M,\omega)$ having non-trivial isotropy subgroups.
	Since each $S_i$ is two-dimensional and the induced action on $(S_i, \omega|_{S_i})$ is fixed-point-free and symplectic, 
	we can easily see that $S_i$ is diffeomorphic to 
	$T^2$ and the restriction $\mu|_{S_i}$ becomes a generalized moment map for the induced symplectic $S^1$-action on $(S_i, \omega|_{S_i})$. 
	Furthermore, each level set $(\mu|_{S_i})^{-1}(t)$ is the union of finite number of $S^1$-orbits for every $t \in \R / \Z$.
	
	Thus $F_\theta \cap S_i = (\mu|_{S_i})^{-1}(\theta)$ 
	is the union of finite number of $S^1$-orbits for each $i=1,\cdots,k$. We denote the number of connected components 
	of $F_\theta \cap S_i$ by $n_i$. 
	Consequentely, there are exactly $n_1 + \cdots +n_k$ exceptional $S^1$-orbits in $F_\theta$ and hence 
	$(F_\theta, \psi_\theta)$ is a pseudo-free $S^1$-manifold. 
	By Theorem \ref{theorem-main}, we have 
	\[
	\sum_{i=1}^k n_i \cdot \frac{q_1^{-1}(S_i)q_2^{-1}(S_i) \cdots q_n^{-1}(S_i)}{p(S_i)} \equiv 0 \modz.
	\]
	Observe that $n_i = \omega(S_i)$ since if we choose a loop $\gamma_i : S^1 \rightarrow S_i \cong T^2$
	generating a gradient-like vector field with respect to $\mu|_{S_i}$, then 
	\[
		\int_{S_i} \omega = \int_{\gamma_i} i_{\underline{X}}\omega = \langle dt, \mu_*[\gamma] \rangle = n_i
	\]
	for every $1 \leq i \leq k$. 
	This completes the proof.
\end{proof}

\begin{remark}
	Any effective fixed-point-free symplectic circle action on a closed symplectic four manifold satisfies the condition in Corollary \ref{corollary_DHfunction}.
\end{remark}

\begin{acknowledgements}
The authors thank anonymous referee for helpful suggestions and comments.
This work was supported by IBS-R003-D1.
\end{acknowledgements}

\end{document}